\newtheorem{lemma}{Lemma}[section]
\newtheorem*{theorem*}{Theorem}
\newtheorem{theorem}[lemma]{Theorem}
\newtheorem{proposition}[lemma]{Proposition}
\newtheorem{definition}[lemma]{Definition}
\newtheorem{remark}[lemma]{Remark}
\newtheorem{corollary}{Corollary}
\newcommand{\p}{\Bbb{P}}
\newcommand{\N}{\mathbb{N}}
\newcommand{\Z}{\mathbb{Z}}
\newcommand{\R}{\mathbb{R}}
\newcommand{\E}{\ensuremath{\mathbb{E}}}
\begin{document}
\title{The Symmetric Coalescent and Wright-Fisher models with bottlenecks}

 \author{Adri\'an Gonz\'alez Casanova, {Ver\'onica Mir\'o Pina}, Arno Siri-J{\'e}gousse\\
 \emph{Corresponding author:}  \href{mailto:veronica.miropina@normalesup.org}{veronica.miropina@normalesup.org}}




\maketitle

\begin{abstract}
We define a new class of $\Xi$-coalescents characterized by a possibly infinite measure over the non negative integers.
We call them symmetric coalescents since they are the unique family of exchangeable coalescents satisfying a symmetry property on their coagulation rates: they are invariant under any transformation that  consists of moving one element from one block to another without changing the total number of blocks.
We illustrate the diversity of behaviors of this family of processes by introducing and studying a one parameter  subclass, the $(\beta,S)$-coalescents.
We also embed this family in a larger class of $\Xi$-coalescents arising as the limit genealogies of Wright-Fisher models with bottlenecks.
Some convergence results rely on a new Skorokhod type metric, that induces the Meyer-Zheng topology, which allows us to study the scaling limit of non-Markovian processes using standard techniques.
\end{abstract}

\section{Introduction}

\subsection{Wright-Fisher models with demographic bottlenecks}
\label{WFbottlenecks}
Since it was proposed in 1982, the Kingman coalescent \cite{Kingman} has become a key tool in population genetics. 
It can describe the limit genealogy of classical models such as the Wright-Fisher and the Moran model.
It has proven to be robust to modifications of these models' assumptions (such as constant population size or random mating) and thus arises as the genealogy of a
broad class of population models.
However, it does not model well genealogies from certain populations, e.g. with a skewed offspring distribution, which are captured by coalescents with (simultaneous) multiple collisions \cite{mohle3}. 

Modeling populations with varying population size has been of great interest, for example to infer the human population history \cite{LiDurbin, Terhorst}.
Several variations of the Wright-Fisher model with fluctuating  population size have been studied, in which the population size changes but remains of order $N$. It has been shown that in many cases, the genealogy converges to a continuous time-rescaling of the Kingman coalescent (see for example \cite{Tavare,Kaj,Jagers}). 
More recently,  Freund \cite{Freund}  studied the case of Cannings models with highly variant offspring number (whose genealogy is usually described by a $\Lambda$-coalescent) in which the population size fluctuates (but remains of the order of $N$) and has shown that the genealogy converges to a time-rescaled $\Lambda$-coalescent.

In Section 6.1 of \cite{BBMST}, Birkner et al. consider a population undergoing recurrent {\it demographic bottlenecks}.
We call a bottleneck an event that reduces substantially the population size and that may last for one or several generations.
They suggest that the genealogy is described by a discontinuous time-rescaling of the Kingman coalescent, more precisely a Kingman coalescent where time is rescaled by a subordinator, and which is in fact a coalescent with simultaneous multiple collisions. 
But they only consider the case  where the population size during the bottleneck is small compared to  $N$ but still tends to infinity as $N \to \infty$.

To our knowledge, the case of \textit{drastic} fluctuations, in which the population size during the bottleneck does not tend to infinity as $N\to \infty$, has not been studied yet. 
In this article, we are going to study different types of bottlenecks, with different scalings for the population sizes inside and outside the bottleneck, and different lengths. We will  establish a classification of the limiting genealogies obtained in the different settings and give some intuitions on how to relate the different processes.
To do so, we define a class of models that can be called Wright-Fisher models with demographic bottlenecks. 
\begin{definition}[The Wright-Fisher model with bottlenecks]
The Wright-Fisher model with bottlenecks (parametrized by $N \in \N$) has varying population size, which is given by
  a sequence of random variables $\{R^N_g\}_{g \in \Z_+}$  taking values in $[N]=\{1, \dots, N\}$.
  It is the random graph $(V,E)$ where $V=\{(i,g):i\in [R_g^N],g\in \Z_+\}$, each individual  $(i,g)\in V$ chooses her parent uniformly amongst the $R_{g-1}^N$ individuals of generation $g-1$, and the set of edges is $E:=\{((j,g-1)(i,g)): (j,g-1)$ is the parent of $(i,g),g\in \Z_+\}$. 
  \label{def2}
\end{definition}

The case $\p(R^N_g=N)=1, \forall g \in \Z_+$, is the classical Wright-Fisher model and it is well known that, when the time is rescaled by $N$ and  $N \to \infty$, the genealogy of a sample of $n$ individuals is described by the Kingman coalescent. We are led to ask ourselves under which conditions on $\{R^N_g\}_{g \in \Z_+}$ does the genealogy still converge to a Kingman coalescent,
and if it does not, what type of coalescents describe the genealogy of a population that has undergone bottlenecks.

We are going to study different types of Wright-Fisher models with bottlenecks, with different types of laws for the sequence $\{R^N_g\}_{g \in \Z_+}$.
Inspired by \cite{BBMST} (Section 6, p. 57), we are going to describe the demographic history of the population by three random sequences of i.i.d. positive real numbers: $\{s_{i,N}\}_{i \in N}$, $\{l_{i,N}\}_{i \in N}$ and $\{b_{i,N}\}_{i \in N}$. The sequence of population sizes $\{R^N_g\}_{g \in \Z_+}$ is given by

$$
R^N_g \ = \ \left\{ 
\begin{array}{ll}
b_{m,N} N  \textrm{ if } \sum_{i=1}^{m-1}(s_{i, N} + l_{i,N}) + s_{m,N} < g  \le  \sum_{i=1}^{m}(s_{i, N} + l_{i,N}) \\
N \textrm{ otherwise.}
\end{array} 
\right.
$$
This means that the population size stays at $N$ for $s_{i,N}$ generations and then it is reduced to $b_{i,N}N$ for $l_{i,N}$ generations. At the end of the bottleneck, the population reaches $N$ again and it stays until the next event. 
Note that we have assumed that the decline and the re-growth of the population size are instantaneous.

We call $b_{i,N}$ the \textit{intensity}  of the $i$-{th} bottleneck and we distinguish between:
\begin{itemize}
\item \textit{Soft} bottlenecks, where $b_{i,N} \to 0$ in distribution but  $Nb_{i,N} \to \infty$ in distribution i.e. the population size during the bottleneck is small compared to $N$, but still large in absolute numbers. 
\item \textit{Drastic} bottlenecks, where $b_{i,N} \to 0$ in distribution and  $Nb_{i,N} < \infty$ in distribution (as $N \to \infty$) i.e. the population size during the bottleneck is very small compared to $N$, and remains finite  in the limiting scenario, when the population size outside the bottlenecks is infinite. 
\end{itemize} 
We call $l_{i,N}$ the \textit{duration} of the $i$-{th} bottleneck. We will distinguish between \textit{short} bottlenecks, that last for only one generation and \textit{long} bottlenecks that last for several generations. In both cases, we will assume that there exists $\alpha \in (0,1]$ such that $l_{i,N}N^\alpha \to 0$  as $N \to \infty$, in distribution, i.e. when time is re-scaled by $N^\alpha$ the duration of the bottleneck is negligible. 
Finally, we call $s_{i,N}$ the \textit{periodicity} of the bottlenecks and again, we distinguish between \textit{frequent} bottlenecks when, $s_{i,N}/N\to 0$ in distribution as $N \to \infty$ and \textit{rare} bottlenecks  otherwise.

As we shall see, coalescents with simultaneous multiple collisions arise as the limiting genealogies for the Wright-Fisher model with bottlenecks. In the case of short drastic bottlenecks, the genealogies are described by a new family of coalescents that we define and study.

\subsection{A new family of $\Xi$-coalescents}

Coalescents with simultaneous multiple collisions ($\Xi$-coalescents, \cite{S2000,mohle3, BertoinLeGall, Bertoin}) form the widest class of exchangeable coagulating Markov chains with values in the set of partitions of $\N$.
Mathematically, they give a nice connection with de Finetti's representation of exchangeable partitions and they exhibit a rich variety of behaviors.
Biologically, they describe the genealogy of a large class of population models and their study provides some statistical tools for inference.
Schweinsberg \cite{S2000} showed that any exchangeable coalescent is characterized by a finite measure $\Xi$ on the ranked infinite simplex $$\Delta = \{ \mathbf{\zeta} = (\zeta_1, \zeta_2. \dots), \ \zeta_1 \ge \zeta_2 \ge \dots \ge 0, \ \sum_{i = 1}^\infty \zeta_i \le 1 \}.$$
Its dynamics are described as follows. We decompose $\Xi$ into a `Kingman part' and a `simultaneous multiple collisions' part, i.e. $\Xi = a \delta_{(0, 0, \dots)} + \Xi^0$ with $a \in [0, \infty)$ and $\Xi^0(\{(0, 0, \dots)\}) = 0 $. 
A $[b, (k_1,\dots, k_r),s]$-collision is a merger of  $b$ blocks into $r$ new blocks and $s$ unchanged blocks.
Each new block contains $k_1, \dots, k_r\ge 2$ original blocks, so that $\sum_{i=1}^rk_i=b-s$. Note that the order of the $k_1, \dots, k_r$ does not matter. 
Each $[b, (k_1,\dots, k_r),s]$-collision happens at some fixed rate 
\begin{equation}\label{ratesXi}
\lambda_{b, (k_1,\dots, k_r),s}= \ a\  \mathds{1}_{\{r = 1, s = b-1\}}  + \int_\Delta\sum_{l=0}^s\binom{s}{l}(1-\sum_{i\ge1}\zeta_i)^{s-l}\sum_{i_1\neq\dots\neq i_{r+l}}\zeta_{i_i}^{k_1}\dots\zeta_{i_r}^{k_r}\zeta_{i_{r+1}}\dots\zeta_{i_{r+l}}\frac{\Xi^0(d\zeta)}{(\zeta,\zeta)}
\end{equation}
where $(\zeta,\zeta):=\sum_{i\ge1}\zeta_i^2$.
This complicated formula is resulting from colliding original blocks according to a Kingman's paintbox associated with a partition $\zeta$ drawn from the $\sigma$-finite measure $\Xi^0(d\zeta)/(\zeta,\zeta)$.
This complexity justifies the necessity to consider subclasses of exchangeable coalescents that are easier to study. 

When $\Xi$ only puts weight on  the subset of mass-partitions having only one positive element, formula \eqref{ratesXi} reduces considerably as there is now no possibility to obtain simultaneous collisions. 
The resulting subclass of exchangeable coalescents is that of coalescents with multiple collisions ($\Lambda$-coalescents, \cite{Pitman, Sagitov}).
Their elegant theory, built around their one to one correspondence with finite measures in $[0, 1]$,
turned this family into the most studied class of coalescent processes for twenty years now.
In particular, $Beta$-coalescents \cite{Sch03} provide a one-parameter family of $\Lambda$-coalescents, more convenient to study and better calibrated for statistical applications in population genetics.
This model is now validated by the biological community \cite{Eldon, AtlanticCod, Sardine}.

However, we find fewer results about $\Xi$-coalescents in the literature and their applications in biology are rarer. 
We can yet cite the Poisson-Dirichlet coalescent \cite{Sag03, Moh10} and the Beta-Xi family \cite{BBE, BLS} that provide promising models.
The reason for this is probably the difficulty arising from the complex formulation of \eqref{ratesXi}.
A first step to simplify it is to consider a measure $\Xi$ on
$$\Delta^* = \{ \mathbf{\zeta} = (\zeta_1, \zeta_2. \dots), \ \zeta_1 \ge \zeta_2 \ge \dots \ge 0, \ \sum_{i = 1}^\infty \zeta_i = 1 \}.$$
In this case the transition rates simplify.
We can now consider $[b, (k_1,\dots, k_r)]$-collisions where $b$ blocks merge into $r$ blocks,
each one containing $k_1, \dots, k_r\ge 1$ original blocks.
Each $[b, (k_1,\dots, k_r)]$-collision happens at some fixed rate 
\begin{equation}\label{ratesXistar}
\lambda_{b, (k_1,\dots, k_r)}=  \ a\  \mathds{1}_{\{r = b-1, k_1 = 2\}}  + \int_{\Delta^*}\sum_{i_1\neq\dots\neq i_{r}}\zeta_{i_i}^{k_1}\dots\zeta_{i_r}^{k_r}\frac{\Xi^0(d\zeta)}{(\zeta,\zeta)}.
\end{equation}

In this paper we describe a simple family of $\Xi$-coalescents: \textit{the symmetric coalescent}, which arises as the limiting genealogy for Wright-Fisher models with short drastic bottlenecks. 
The reason for its name is that the distribution of the tree obtained from a symmetric coalescent is invariant under the transformation that involves cutting one branch from one node and pasting it somewhere else in the tree, at the same height  (see Figure \ref{SymetricTree} for an illustration). In other words, as a partition-valued process, the symmetric coalescent is invariant under the transformation that consists of displacing one element from one block to another (without changing the number of non-empty blocks). 

\begin{figure}
\begin{center}
\includegraphics[width=15cm]{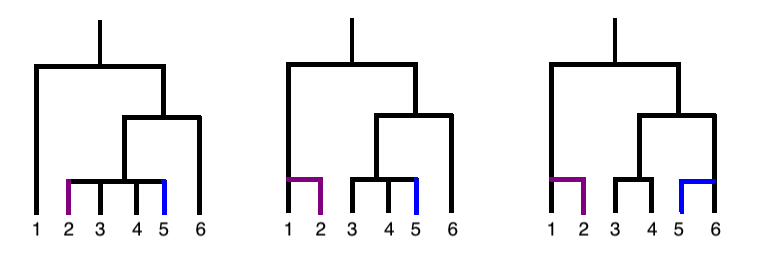}
\caption{In the symmetric coalescent these three labelled trees have the same probability. The second one is obtained from the first one by cutting and pasting the purple branch to a different node. The third one can be obtained from the second one by displacing the blue branch from one position to another.  In the $S$-coalescent, $\lambda_{6,(4,1,1)} = \lambda_{6,(2,3,1)} = \lambda_{6,(2,2,2)}$.}
\label{SymetricTree}
\end{center}
\end{figure}
 
\begin{definition}
The symmetric coalescents are the  exchangeable coalescents whose  transition rates satisfy the following symmetry property:
for every $b>1$, $2\le r <b$ and for every $k_1, \ldots, k_r$ and $k'_1, \ldots, k'_r$ such that  $\sum_{i=1}^r k_i = \sum_{i=1}^r k'_i = b$, $$\lambda_{b, (k_1, \dots, k_r)} = \lambda_{b, (k'_1, \dots, k'_r)}.$$
\label{def1}
\end{definition}

In the sequel we will consider the symmetric elements of $\Delta^*$.
Let $\xi^0:=(0,0,\dots)$ and, for $k\in\N$, $$\xi^k := (\frac{1}{k}, \dots, \frac{1}{k}, 0, \dots)$$
and we denote the set of the symmetric elements of $\Delta^*$ by $\Delta^{sym}:=\{\xi^k,k\in\N_0\}$.
Our first result establishes a correspondence between symmetric coalescents and measures on a simple set, being here $\Z_+:=\N\cup\{0\}.$

\begin{theorem}
A coalescent is symmetric if and only if there exists a measure $F$ on $\Z_+$ such that 
\begin{equation}
F(0) < \infty  \ \textrm{ and } \ \sum_{k \ge1} \frac{F(k)}{k} < \infty
\label{Ffinita}
\end{equation}
and such that its characterizing measure $S$ on $\Delta$ only puts weight on $\Delta^{sym}$ and
\begin{equation}\label{StoF}
S(\xi^k) \ = \ \left\{ 
\begin{array}{ll}
\frac{F(k)}{k} & \textrm{ if } k\in\N \\
F(0) & \textrm{ if } k=0
\end{array} 
\right..
\end{equation}
\label{propXiF}
\end{theorem}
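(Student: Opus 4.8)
\emph{Strategy.} Both implications rest on Schweinsberg's bijection \cite{S2000} between exchangeable coalescents and finite measures on $\Delta$; the measure $F$ is obtained from $S$ by inverting \eqref{StoF}, so the real content is (i) that an $S$ of the form \eqref{StoF} always produces a symmetric coalescent, and (ii) that, conversely, symmetry forces $S$ onto $\Delta^{sym}$.

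\emph{Sufficiency.} Given $F$ satisfying \eqref{Ffinita}, define $S$ through \eqref{StoF}. Then $S(\Delta)=F(0)+\sum_{k\ge1}F(k)/k<\infty$, so by \cite{S2000} $S$ is the characterizing measure of an exchangeable coalescent, and the same bound makes every collision rate finite (for $2\le r<b$ one has $k!/((k-r)!\,k^b)\le k^{r-b}\le k^{-1}$). For $k\ge1$, $\xi^k\in\Delta^*$, $(\xi^k,\xi^k)=1/k$, and every nonzero term of $\sum_{i_1\neq\dots\neq i_r}(\xi^k_{i_1})^{k_1}\cdots(\xi^k_{i_r})^{k_r}$ equals $k^{-b}$, so this sum equals $\tfrac{k!}{(k-r)!}\,k^{-b}$ (and $0$ when $k<r$). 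Substituting into \eqref{ratesXistar} yields
$$\lambda_{b,(k_1,\dots,k_r)}=F(0)\,\mathds{1}_{\{r=b-1\}}+\sum_{k\ge r}\frac{k!}{(k-r)!\,k^{b}}\,F(k),$$
which depends on the composition only through $b$ and $r$ (when $2\le r<b$, either $r<b-1$ and the Kingman term vanishes, or $r=b-1$ and the composition is forced to be $(2,1,\dots,1)$). This is exactly Definition \ref{def1}. Conversely, if \eqref{Ffinita} fails then either $F(0)=\infty$ or $S(\Delta)=\infty$ and already $\lambda_{2,(2)}=S(\Delta)$ diverges, so \eqref{Ffinita} is the correct integrability requirement.

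\emph{Necessity.} Let the coalescent be symmetric, with unique characterizing measure $S=a\delta_{\xi^0}+\Xi^0$, $\Xi^0(\{\xi^0\})=0$. A single instance of the symmetry relation already suffices: take $b=4$, $r=2$, i.e.\ $\lambda_{4,(3,1)}=\lambda_{4,(2,2)}$ (neither merger is a single binary merger, so the Kingman atom contributes to neither). Writing $p_j(\zeta):=\sum_i\zeta_i^j$ and expanding the integrand of \eqref{ratesXi} — the dust terms $\sum_l\binom{s}{l}(1-\sum_i\zeta_i)^{s-l}(\cdots)$ combine so that $\sum_i\zeta_i$ drops out — one obtains $\lambda_{4,(3,1)}=\int_\Delta\frac{p_3-p_4}{p_2}\,\Xi^0(d\zeta)$ and $\lambda_{4,(2,2)}=\int_\Delta\frac{p_2^2-p_4}{p_2}\,\Xi^0(d\zeta)$, so the symmetry relation reads $\int_\Delta\frac{p_3-p_2^2}{p_2}\,\Xi^0(d\zeta)=0$. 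Now Cauchy--Schwarz applied to $p_2=\sum_i\zeta_i^{3/2}\zeta_i^{1/2}$ gives $p_2^2\le p_3\sum_i\zeta_i\le p_3$ on all of $\Delta$, with equality if and only if all positive coordinates of $\zeta$ coincide and add up to $1$, i.e.\ $\zeta\in\Delta^{sym}$. Hence the nonnegative integrand vanishes $\Xi^0$-a.e., so $\Xi^0$ — and therefore $S$ — is carried by $\Delta^{sym}$. Setting $F(0):=S(\xi^0)=a$ and $F(k):=kS(\xi^k)$ recovers \eqref{StoF}, and \eqref{Ffinita} follows from $S(\Delta)<\infty$.

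\emph{Main obstacle.} The calculations above are routine; the one point that needs care is the bookkeeping that translates the block-count notation $(k_1,\dots,k_r)$ with $\sum k_i=b$ of Definition \ref{def1} and \eqref{ratesXistar} into the $[b,(k_1,\dots,k_r),s]$ notation of \eqref{ratesXi}, together with the verification that the dust terms in \eqref{ratesXi} really do cancel, so that the inequality $p_2^2\le p_3$ can legitimately be invoked on all of $\Delta$ rather than only on $\Delta^*$.
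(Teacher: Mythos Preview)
Your proof is correct and follows the same overall strategy as the paper: the sufficiency direction is identical (compute the rates explicitly and observe they depend only on $b$ and $r$), and the necessity direction also rests entirely on the single relation $\lambda_{4,(3,1)}=\lambda_{4,(2,2)}$.

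The execution of the necessity direction differs, and yours is the cleaner of the two. The paper proceeds in two separate steps: first it drops the dust term from $\lambda_{4,(3,1)}$ to obtain the lower bound $\lambda_{4,(3,1)}-\lambda_{4,(2,2)}\ge\int_\Delta\sum_{i_1<i_2}\zeta_{i_1}\zeta_{i_2}(\zeta_{i_1}-\zeta_{i_2})^2\,\tfrac{\Xi^0(d\zeta)}{(\zeta,\zeta)}$, which rules out the set $Z$ of mass-partitions with two distinct positive coordinates; then, restricting to $\Delta\setminus Z$ where $\sum_{i_1\neq i_2}\zeta_{i_1}^3\zeta_{i_2}=\sum_{i_1\neq i_2}\zeta_{i_1}^2\zeta_{i_2}^2$, it observes that the surviving dust term $\zeta_0\sum_j\zeta_j^3$ forces $\zeta_0=0$. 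You instead compute the exact difference $\int_\Delta\tfrac{p_3-p_2^2}{p_2}\,\Xi^0(d\zeta)$ and dispose of both obstructions at once via the Cauchy--Schwarz bound $p_2^2\le p_3p_1\le p_3$, whose equality case (proportionality of $(\zeta_i^{3/2})$ and $(\zeta_i^{1/2})$, together with $p_1=1$) is precisely $\Delta^{sym}$. This single-inequality argument is more economical and makes transparent why no further symmetry relations beyond $b=4$ are needed; the paper's two-step version, on the other hand, is perhaps more elementary in that it avoids power-sum bookkeeping and the need to verify that the dust contributions to $\lambda_{4,(3,1)}$ really do collapse to give $p_3-p_4$ over all of $\Delta$.
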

Condition \eqref{Ffinita} ensures that the measure $S$ is finite which is a necessary and sufficient condition for a $\Xi$-coalescent to be well defined.
Observe that for $k\in\N$, $(\xi^k, \xi^k)=\frac{1}{k}$, so the rate of a $k$ merger is $F(k)$. 
Mimicking the common notations we will speak about {\it $S$-coalescents}.

Before going into further detail, we start by recalling a useful tool, which is Kingman's paintbox construction of $\Xi$-coalescents (\cite{Kingman}). We will only discuss the case when $\Xi$ is concentrated on $\Delta^*$. 
Each element in $\Delta^*$ can be seen as a tiling of (0,1), where the sizes of the subintervals are $\zeta_1, \zeta_2, \dots$.
The $\Xi$-coalescent can be constructed as follows: when there are $b$ blocks, for every $\zeta \in \Delta^*$, at rate $\Xi(d\zeta)/(\zeta,\zeta)$, we choose the tiling associated with $\zeta$, then we throw $b$ uniform random variables in $(0,1)$, each one associated with one block, and all blocks within one subinterval merge.
In the case of the symmetric coalescent, the paintbox construction can be reformulated as follows: when there are $b$ blocks, at rate $F(k)$, we distribute $b$ balls into $k$ boxes and blocks corresponding  to balls that are in the same box merge. For more details we refer the reader to the first chapter of \cite{Berestycki}.
This construction allows us to obtain a nice explicit formula for the transition rates.  
 
 \begin{proposition}
For each $b\ge 2$ and  $k_1, \ldots, k_r$  such that  $\sum_{i=1}^r k_i  = b$, we have
\begin{equation*}
\lambda_{b, (k_1, \dots, k_r)} \ = \ a\  \mathds{1}_{\{r = b-1, k_1 = 2\}} \ + \ \sum_{k\ge r} F(k) \frac{k!}{(k-r)!} \frac1{k^b},\end{equation*}
where $a = F(0)$.
\end{proposition}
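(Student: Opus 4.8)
The plan is to read off the transition rates directly from the balls-in-boxes paintbox construction recalled just before the statement, treating the Kingman part and the ``proper'' part separately.

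First I would fix $b\ge 2$ current blocks and a target collision type made concrete as a fixed partition $\{B_1,\dots,B_r\}$ of these $b$ blocks with $|B_i|=k_i$; by definition $\lambda_{b,(k_1,\dots,k_r)}$ is the rate at which the coalescent performs exactly this merger. The Dirac component $a\,\delta_{(0,0,\dots)}$ of $S$ contributes at rate $a=F(0)$ and produces precisely the binary mergers, i.e.\ the types with $r=b-1$ and exactly one group of size $2$; this accounts for the term $a\,\mathds{1}_{\{r=b-1,\,k_1=2\}}$ and nothing else.

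Next I would treat the proper component $\Xi^0=S$ restricted to $\Delta^{sym}$. By \eqref{StoF} and the identity $(\xi^k,\xi^k)=1/k$, the atom $\xi^k$ carries paintbox rate $S(\xi^k)/(\xi^k,\xi^k)=F(k)$, which is exactly the rate at which, in the balls-in-boxes description, the $b$ blocks are thrown independently and uniformly into the $k$ equal boxes of the tiling $\xi^k$. The throw realizes the fixed partition $\{B_1,\dots,B_r\}$ iff all blocks of each $B_i$ land in one common box and the $r$ boxes so used are pairwise distinct. Among the $k^b$ equally likely throws, the favorable ones correspond to injections $\{B_1,\dots,B_r\}\hookrightarrow[k]$, of which there are $k(k-1)\cdots(k-r+1)=k!/(k-r)!$ (this being $0$, consistently, when $k<r$). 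Hence a fixed $[b,(k_1,\dots,k_r)]$-collision occurs from the box of size $k$ at rate $F(k)\,\frac{k!}{(k-r)!}\,\frac{1}{k^b}$; summing over $k\ge r$ and adding the Kingman contribution yields the claimed formula. Since this count is invariant under permuting the $k_i$, the rate depends only on $b$ and the multiset $\{k_1,\dots,k_r\}$, in agreement with Definition \ref{def1}.

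This is essentially a bookkeeping computation, so I do not expect a real obstacle; the two points needing care are (i) that the rate of a $[b,(k_1,\dots,k_r)]$-collision refers to one fixed labelled merger, so the count must use ordered/injective assignments of groups to boxes rather than unordered set partitions, and (ii) convergence of the series, which holds for a genuine collision $r\le b-1$ because then $\frac{k!}{(k-r)!}\frac{1}{k^b}\le k^{r-b}\le k^{-1}$ and $\sum_{k\ge1}F(k)/k<\infty$ by \eqref{Ffinita}. As a cross-check I would note that the same identity follows by substituting $\zeta=\xi^k$ into the general rate formula \eqref{ratesXistar}, since the inner sum $\sum_{i_1\neq\dots\neq i_r}\zeta_{i_1}^{k_1}\cdots\zeta_{i_r}^{k_r}$ collapses to $\frac{k!}{(k-r)!}\frac{1}{k^b}$ by the very same combinatorial count.
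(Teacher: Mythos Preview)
Your proof is correct and matches the paper's argument essentially line for line: the paper also reads off the formula from \eqref{ratesXi}/\eqref{ratesXistar}, identifying $F(k)$ with $\Xi^0(d\zeta)/(\zeta,\zeta)$ at $\zeta=\xi^k$, the factor $k!/(k-r)!$ with the number of ordered choices of $i_1,\dots,i_r$, and $1/k^b$ with $\zeta_{i_1}^{k_1}\cdots\zeta_{i_r}^{k_r}$, then restates this combinatorially as the balls-in-boxes allocation. The only difference is one of emphasis---you lead with the paintbox count and cite \eqref{ratesXistar} as a cross-check, while the paper leads with the rate formula and gives the balls-in-boxes reading afterwards; your added remarks on convergence of the series and on the labelled-versus-unlabelled issue are careful touches the paper leaves implicit.
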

This result is obtained from \eqref{ratesXi} as follows. The term $F(k)$ comes from $\Xi^0(d\zeta)/(\zeta,\zeta)$, while $k!/(k - r)!$ is the number of choices of $i_1, \dots, i_r$ and in this case $\zeta_{i_i}^{k_1}\dots\zeta_{i_r}^{k_r}$ equals $1/k^b$.
In other words, at each jump time, if the number of boxes is $k$ (chosen with respect to $F$), we choose $r$ \textit{ordered} boxes and we allocate the $b$ balls to these $r$ boxes ($k_1$ balls to the first box, $k_2$ to the second one, etc...).

Finally let us consider $\{N_t\}_{t \geq 0}$, the block-counting process of the symmetric coalescent and, for $i>j$, let us denote by $q_{ij}$ its transition rate from $i$ to $j$. 
Our next result is the symmetric coalescent version of Proposition 2.1 in \cite{GM}.
Let $W^{k,b}$ be the random variable corresponding to the number of non-empty boxes when allocating randomly $b$ balls into $k$ boxes, whose distribution can be found in  \cite{durrett}, proof of Theorem 3.6.10, page 172.

\begin{proposition}
We have 
\begin{equation*}
q_{ij} \ = \ a \binom{i}{2} \mathds{1}_{\{j = i-1\}} \ + \  \sum_{k\ge 1} F(k) \p(W^{k,i} = j) 
\end{equation*}
with 
\begin{equation*}
 \p(W^{k,i} = j)  \ = \ \binom{k}{j} \left(\frac{j}{k}\right)^i \sum_{r=0}^j (-1)^r \binom{j}{r} \left(1- \frac{r}{j}\right)^i. 
\end{equation*}
\label{ratesblockcounting}
\end{proposition}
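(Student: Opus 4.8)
The plan is to compute $q_{ij}$ directly from the paintbox construction of the $S$-coalescent, conditioning on the random number of boxes $k$, and then to identify $\p(W^{k,i}=j)$ with the classical occupancy distribution, which is precisely the inclusion–exclusion formula cited from \cite{durrett}.

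First I would recall the construction: when there are $i$ blocks, the Kingman part contributes binary mergers at rate $a\binom{i}{2}$, so this piece accounts exactly for the term $a\binom{i}{2}\mathds{1}_{\{j=i-1\}}$. For the genuinely multiple part, at rate $F(k)$ we throw $i$ balls uniformly and independently into $k$ boxes and merge the blocks lying in a common box; the resulting number of blocks equals the number $W^{k,i}$ of occupied boxes. Hence, summing over $k$ and conditioning on a transition to exactly $j$ blocks, one gets
\begin{equation*}
q_{ij} \ = \ a\binom{i}{2}\mathds{1}_{\{j=i-1\}} \ + \ \sum_{k\ge 1} F(k)\,\p(W^{k,i}=j),
\end{equation*}
which is the first displayed formula. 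The only subtlety worth a sentence is interchanging the sum over $k$ with the (implicit) expectation: this is justified because all terms are nonnegative, and summability of the whole expression for $i>j$ follows from condition \eqref{Ffinita} together with the fact that $\p(W^{k,i}=j)$ is bounded by $1$ and, for fixed $i>j$, is $O(1/k)$ as $k\to\infty$ (a box configuration with $i>j$ occupied boxes out of $k$ forces at least two balls to coincide, which has probability $O(1/k)$), so $\sum_k F(k)\p(W^{k,i}=j)\le \sum_k F(k)/k\cdot C_i<\infty$.

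Next I would derive the occupancy formula. Fix $k$ boxes and $i$ balls. By inclusion–exclusion over which $j$ boxes are the occupied ones and then over how many of those $j$ are avoided, the number of surjections from $i$ balls onto a prescribed set of $j$ boxes is $\sum_{r=0}^{j}(-1)^r\binom{j}{r}(j-r)^i$; dividing by $k^i$ (total placements) and multiplying by $\binom{k}{j}$ (choice of the occupied boxes) gives
\begin{equation*}
\p(W^{k,i}=j)\ =\ \binom{k}{j}\frac{1}{k^i}\sum_{r=0}^{j}(-1)^r\binom{j}{r}(j-r)^i\ =\ \binom{k}{j}\left(\frac{j}{k}\right)^i\sum_{r=0}^{j}(-1)^r\binom{j}{r}\left(1-\frac{r}{j}\right)^i,
\end{equation*}
where the last step just pulls $j^i$ out of the sum. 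This is the stated expression, and it coincides with the distribution in \cite{durrett}, proof of Theorem~3.6.10.

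I do not expect a serious obstacle here: the statement is essentially a bookkeeping exercise combining the paintbox description already established before the proposition with a standard balls-in-boxes computation. The one point requiring a little care — and the closest thing to a "hard part" — is making the infinite sum over $k$ rigorous, i.e. checking absolute convergence and the legitimacy of the term-by-term conditioning; this is handled by the $O(1/k)$ bound on $\p(W^{k,i}=j)$ for $i>j$ combined with $\sum_{k\ge1}F(k)/k<\infty$ from \eqref{Ffinita}, exactly as in the analogous Proposition~2.1 of \cite{GM}.
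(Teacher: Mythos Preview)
Your proposal is correct and follows exactly the approach the paper itself indicates: the paper does not give a formal proof of this proposition but simply points to the paintbox description (stated just before the proposition) together with the occupancy formula from \cite{durrett}, and presents the result as the $S$-coalescent analogue of Proposition~2.1 in \cite{GM}. Your argument spells out precisely these two ingredients, and your extra care about the summability over $k$ via the $O(1/k)$ bound and condition~\eqref{Ffinita} is a legitimate refinement that the paper leaves implicit.
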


The fact that the characterizing measure $S$ (or $F$) only puts weight on elements of $\Delta^{sym}$ simplifies a lot the global picture of the coalescence tree, even when starting from an infinite population. 
In particular, an $S$-coalescent is almost surely finite after the first coalescence (that is not a `Kingman type' coalescence).
However, the symmetric coalescent can {\it come down from infinity}. 
Denoting by $\{N_t\}_{t\geq0}$ the block-counting process of the symmetric coalescent and assuming that $N_0=\infty$, recall that a coalescent {comes down from infinity} if for every $t>0$, $N_t<\infty$ almost surely.
Observing that the time of the first (non `Kingman type') coalescence event is exponentially distributed, with parameter $\sum_{k=1}^\infty \frac{S(\xi^k)}{(\xi^k, \xi^k)} = \sum_{k=1}^\infty kS(\xi^k)$, it is straightforward to get the next result.
 \begin{proposition}\label{CDI}
 An $S$-coalescent comes down from infinity if and only if 
  $$
 S(\xi^0) >0 \ \textrm{ or } \ \sum_{k\ge1} k S(\xi^k)=\sum_{k\ge1} F(k)=\infty.
 $$
 \end{proposition}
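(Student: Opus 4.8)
The plan is to read the dichotomy directly off the Poissonian paintbox construction recalled above, using the two facts already isolated in the text: first, that $(\xi^k,\xi^k)=1/k$, so that ``$k$-box'' events arrive along a Poisson process of rate $S(\xi^k)/(\xi^k,\xi^k)=kS(\xi^k)=F(k)$, independently over $k$ and independently of the Kingman clocks of rate $a=S(\xi^0)$; and second, that when started from $N_0=\infty$ the first non-Kingman event instantaneously reduces the number of blocks to a finite (random) value, since throwing infinitely many i.i.d.\ uniforms into the $k$ fixed subintervals fills each of them almost surely.

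For the ``if'' direction, suppose first that $\sum_{k\ge1}F(k)=\infty$. The $k$-box events, $k\ge1$, are independent Poisson processes whose superposition has infinite intensity, so for every fixed $t>0$ almost surely some non-Kingman event has occurred by time $t$; by the second fact this already forces $N_t<\infty$, and since the number of blocks can only decrease it stays finite afterwards. Hence the $S$-coalescent comes down from infinity. Suppose instead $S(\xi^0)=a>0$. Build the $S$-coalescent together with a Kingman coalescent on a common probability space, driven by the same family of binary-merger clocks; then the partition of the $S$-coalescent is always a coarsening of the Kingman one, so $N_t$ is dominated by the Kingman block-counting process, which is classically known to come down from infinity \cite{Kingman}. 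Thus again $N_t<\infty$ for every $t>0$.

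For the ``only if'' direction, assume $S(\xi^0)=0$ and $\sum_{k\ge1}F(k)<\infty$. Then there are no Kingman events at all, and the first coalescence time $T$ is exponential with the finite parameter $\sum_{k\ge1}kS(\xi^k)=\sum_{k\ge1}F(k)$; in particular $T>0$ almost surely and $N_t=\infty$ for every $t<T$. Therefore $\p(N_t=\infty)\ge e^{-t\sum_{k\ge1}F(k)}>0$ for all $t>0$, so the coalescent does not come down from infinity.

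The arithmetic here (the first-event rate, the exponential lower bound) is immediate; the points that genuinely need care are the two structural facts imported from the construction---that the configuration is unchanged strictly before, and finite strictly after, the first non-Kingman event when started from infinity---and the coupling with the Kingman coalescent when $a>0$. Both are standard consequences of a consistent version of Kingman's paintbox construction, and I expect the main obstacle to be the bookkeeping in the coupling: choosing a single Poissonian family so that, at every time, the $S$-coalescent is genuinely a coarsening of the simultaneously running Kingman coalescent.
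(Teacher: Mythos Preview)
Your argument is correct and is essentially a fleshed-out version of the one-line sketch the paper gives just before the proposition. The paper only records the key observation that the first non-Kingman event is exponential with parameter $\sum_{k\ge1}kS(\xi^k)=\sum_{k\ge1}F(k)$ and that after such an event the number of blocks is finite; you supply the missing details, in particular the coupling with the Kingman coalescent to handle the case $S(\xi^0)>0$, which the paper leaves implicit.
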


 An interesting family of symmetric coalescents, that we will call $(\beta,S)$-coalescents, contains those characterized by $F(k) = k^{-\beta}$, for  $\beta >0$ (so that condition \eqref{Ffinita} is satisfied).
 By Proposition \ref{CDI}, a $(\beta,S)$-coalescent
comes down from infinity if and only if $0<\beta\le 1$. 
We now focus on the total coalescence rate when there are $n$ lineages, which is given by 
\begin{equation}\label{tcr}
\lambda_n=\sum_{r =1}^{n-1} \ \sum_{k_1, \dots, k_r, \sum k_i = n} \mathcal{N}(n, (k_1,\dots, k_r)) \lambda_{n, (k_1,\dots, k_r)},\end{equation}
where  $\mathcal{N}(n, (k_1,\dots, k_r))$ is the number of different simultaneous choices of a $k_1$-tuple, a $k_2$-tuple,... and a $k_r$-tuple from a set of $b$ elements. An explicit formula for this number can be found in \cite{S2000}, display $(3)$. 
\begin{proposition}\label{rate}
For the $(\beta, S)$-coalescent, with $\beta \in\  (0, 1)$, we have
$$\lim_{n\to\infty} n^{2(\beta-1)}\lambda_n= \frac{2^{\beta-1}\Gamma(\beta)}{1-\beta}.$$
For the $(1, S)$-coalescent, we have
$$\lim_{n\to\infty}\frac{\lambda_n}{\log n}=2.$$
\label{totalrate}
\end{proposition}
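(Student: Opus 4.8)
The plan is to identify $\lambda_n$ with the total rate at which the block-counting process leaves state $n$ and then to compare the resulting sum with an integral. Since $a=F(0)=0$ for the $(\beta,S)$-coalescent, summing the rates of Proposition~\ref{ratesblockcounting} over all $j<n$ and using that $\p(W^{k,n}=n)=k!/((k-n)!\,k^{n})=\prod_{i=0}^{n-1}(1-i/k)$ for $k\ge n$, while $\p(W^{k,n}=n)=0$ for $k<n$, gives
\begin{equation}\label{plan-lambda}
\lambda_n \ = \ \sum_{k\ge1}k^{-\beta}\,\p(W^{k,n}<n) \ = \ \sum_{k=1}^{n-1}k^{-\beta}\ +\ \sum_{k\ge n}k^{-\beta}\Bigl(1-\prod_{i=0}^{n-1}\bigl(1-\tfrac ik\bigr)\Bigr).
\end{equation}
Set $m:=\binom n2=\sum_{i=0}^{n-1}i$. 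The guiding idea is that \eqref{plan-lambda} should, up to negligible errors, equal $\sum_{k\ge1}k^{-\beta}\bigl(1-e^{-m/k}\bigr)$, and that after the rescaling $k=mt$ this sum is a Riemann sum governed by $\int_0^\infty t^{-\beta}(1-e^{-1/t})\,\ud t$.

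First I would replace $\prod_{i=0}^{n-1}(1-i/k)$ by $e^{-m/k}$. Using $1-x\le e^{-x}$ and $\log(1-x)\ge-x-x^2$ for $x\in[0,\tfrac12]$, one obtains, for $k>2n$, $0\le e^{-m/k}-\prod_{i=0}^{n-1}(1-i/k)\le C\,n^{3}/k^{2}$, and in any case this difference is at most $e^{-m/k}$; on the remaining ranges $k\le n-1$ and $n\le k\le2n$ one has $m/k\gtrsim n$, so $e^{-m/k}$ is exponentially small in $n$. Summing these estimates against $k^{-\beta}$, and splitting the sum over $k>2n$ at $k\asymp n^{3/2}$, yields $\lambda_n=\sum_{k\ge1}k^{-\beta}(1-e^{-m/k})+E_n$ with $E_n=O(n^{3(1-\beta)/2})$ when $\beta\in(0,1)$ and $E_n=O(1)$ when $\beta=1$; in both cases $E_n$ is of strictly smaller order than the asymptotics to be proved.

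For $\beta\in(0,1)$, set $\phi(t):=t^{-\beta}(1-e^{-1/t})$, so that $\sum_{k\ge1}k^{-\beta}(1-e^{-m/k})=m^{1-\beta}\sum_{j\ge1}\tfrac1m\,\phi(j/m)$. Since $\phi$ is continuous on $(0,\infty)$ with $\phi(t)\le t^{-\beta}$ for $t\le1$ and $\phi(t)\le t^{-\beta-1}$ for $t\ge1$ (both integrable exactly because $0<\beta<1$), a routine three-piece comparison gives $\sum_{j\ge1}\tfrac1m\phi(j/m)\to\int_0^\infty\phi(t)\,\ud t$; together with $m=\binom n2\sim n^2/2$, hence $m^{1-\beta}\sim2^{\beta-1}n^{2(1-\beta)}$, this yields $n^{2(\beta-1)}\lambda_n\to2^{\beta-1}\int_0^\infty\phi(t)\,\ud t$. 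The substitution $u=1/t$ turns the integral into $\int_0^\infty u^{\beta-2}(1-e^{-u})\,\ud u$, and an integration by parts (legitimate since $\beta-2\in(-2,-1)$) identifies it with $-\Gamma(\beta-1)=\Gamma(\beta)/(1-\beta)$, which is the first assertion. For $\beta=1$ the analogous integral $\int_0^\infty t^{-1}(1-e^{-1/t})\,\ud t$ diverges logarithmically at $0$, which explains the $\log n$ normalization; concretely, splitting $\sum_{k\ge1}k^{-1}(1-e^{-m/k})$ at $k=m$, the tail is $O(1)$ since $1-e^{-m/k}\le m/k$ and $m\sum_{k>m}k^{-2}=O(1)$, while the head equals $\sum_{k=1}^m k^{-1}-\sum_{k=1}^m k^{-1}e^{-m/k}$ with $\sum_{k=1}^m k^{-1}=\log m+O(1)$ and $\sum_{k=1}^m k^{-1}e^{-m/k}\le e^{-1}$ (because $k^{-1}e^{-m/k}=m^{-1}(m/k)e^{-m/k}\le(em)^{-1}$). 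Hence $\sum_{k\ge1}k^{-1}(1-e^{-m/k})=\log m+O(1)=2\log n+O(1)$, and with $E_n=O(1)$ we get $\lambda_n=2\log n+O(1)$, i.e.\ $\lambda_n/\log n\to2$.

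The main obstacle is the uniform bookkeeping in these two approximation steps. The bulk of the mass in \eqref{plan-lambda} lives on the scale $k\asymp n^{2}$, so one has to control the error of replacing $\prod_{i<n}(1-i/k)$ by $e^{-m/k}$ simultaneously over all $k$ and, when $\beta<1$, justify convergence of the Riemann sum of $\phi$, which is unbounded near $0$ and has a heavy tail at infinity; once the crude exponential bounds for $k\lesssim n$ and the domination $\phi(t)\le t^{-\beta}\wedge t^{-\beta-1}$ are in place, the remaining estimates are routine.
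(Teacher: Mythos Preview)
Your proof is correct and follows essentially the same strategy as the paper: write $\lambda_n=\sum_{k\ge1}k^{-\beta}\p(W^{k,n}<n)$, replace $\prod_{i<n}(1-i/k)$ by $e^{-m/k}$ with $m\sim n^2/2$, and identify the resulting sum with the integral $\int_0^\infty t^{-\beta}(1-e^{-1/t})\,\ud t$ (or its logarithmic analogue when $\beta=1$). The paper splits the sum at $k=n^{1+\epsilon}$ and treats the small-$k$ contribution as a lower-order remainder, while you split at $k\asymp n^{3/2}$ and track the exponential-versus-polynomial error more explicitly, obtaining the sharper remainder $E_n=O(n^{3(1-\beta)/2})$; for $\beta=1$ the paper uses a three-piece decomposition at $n$ and $n^{1+\epsilon}$, whereas you reduce globally to $\sum_k k^{-1}(1-e^{-m/k})$ first and then split once at $k=m$. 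These are cosmetic differences in bookkeeping; the analytic content is the same.
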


It is interesting to compare these asymptotics with other classical coalescents. For example, when $\beta \to 0$, the total coalescence rate becomes very close to the total coalescence rate of the Kingman coalescent, which is of order $n^2$. When $\beta\in(0,1/2]$ the total coalescence rate is very close to that of a $Beta(2-2\beta, 2\beta)$-coalescent, see Lemma 2.2 in \cite{DDSJ}. 
In particular, the rates of the $(1/2,S)$-coalescent have the same order than those of the Bolthausen-Sznitman coalescent.

\subsection{Genealogies of Wright-Fisher models with bottlenecks}
In this paper, we study four types of Wright-Fisher models with bottlenecks and their genealogies.
We establish the relations between forwards and backwards models via moment duality results.
In particular, when the bottlenecks are short, drastic and rare and $b_{i,N}N$ is distributed as $F^0$, a measure on $\N$, 
we prove (see Theorem \ref{SDEs3}) that the scaling limit, in the sense of weak convergence in the Skorokhod topology, of a subpopulation frequency is given by a Wright-Fisher diffusion with jumps 
$$dX_t \ = \ \sqrt{X_t(1-X_t)}dB_t \ + \ \int_{\N}  \int_{[0,1]^\N} \frac1k \sum_{i=1}^k \left( \mathds{1}_{\{u_i \le X_{t^-}\}} - X_{t^-}\right) \hat N(dt, dk, du),
$$where  $\{B_t\}_{t\geq0}$ is a standard Brownian motion and $\hat N$ is a compensated Poisson measure on $(0, \infty) \times \N \times [0,1]^{\N}$ with intensity $ds \otimes F^0(k) \otimes du$, where $du$ is the Lebesgue measure on $[0,1]^{\N}$.
The jump term  can be interpreted as follows. At rate $F^0(k)$ there is a bottleneck in which only $k$ individuals survive. The term `$\mathds{1}_{\{u_i \le X_{t^-}\}}$' is the probability that an individual chooses a type 1 parent (when choosing her parent uniformly from the generation before the bottleneck), and therefore $\frac1k \sum_{i=1}^k \mathds{1}_{\{u_i \le X_{t^-}\}}$ is the frequency of type 1 individuals after the bottleneck.
As we shall see in Section \ref{duality1}, this equation has a unique strong solution that is moment dual to the block-counting process of the symmetric  coalescent characterized by $F = \delta_0 + F^0$.
Duality relations between $\Xi$-coalescents and Wright-Fisher diffusions with jumps were established in \cite{BBMST}. 
Moment duality implies that the process counting the number of ancestors to a sample of individuals in the Wright-Fisher model with short drastic bottlenecks converges to the block-counting process of the symmetric coalescent, in the sense of finite dimensional distributions. 
In Section 3.1 of \cite{GS} the authors show that convergence in the $J_1$ Skorokhod topology of the forward frequency process to the solution of the above SDE implies convergence in $J_1$ of the process counting the number of ancestors to a sample.

A similar strategy is used in the case of  Wright-Fisher models with long bottlenecks.
However, the situation is different since the frequency process is not Markovian anymore, as the transition rates depend on whether the population is undergoing a bottleneck or not. 
Nevertheless, we still obtain a scaling limit, but in the sense of convergence in measure (as defined in Section \ref{topology}) over the Skorokhod space, to a diffusion with jumps.
Intuitively, only a measure zero set of points prevents  $J_1$ convergence. These are exactly the accumulation points of the times at which a bottleneck occurred in the discrete model. The method for proving this convergence relies on a new Skorokhod type metric that allows us to prove convergence in measure using standard arguments.
We prove that the diffusion with jumps  is moment dual to the block-counting process of a  $\Xi$-coalescent. In the case of long drastic bottlenecks, it is the \textit{drastic bottleneck coalescent} (see Definition \ref{longdrastic}) and in the case of long soft bottlenecks it is the \textit{subordinated Kingman coalescent} introduced in \cite{BBMST} and studied in Section \ref{sub-kingman}.
Again, moment duality implies convergence, in the sense of finite dimensional distributions, of the process counting the number of ancestors of a sample. 

In the case of Wright-Fisher models with soft drastic bottlenecks, a different strategy is used. Using M\"ohle's theorem \cite{mohle3}, we obtain that, under an appropriate time re-scaling, the (partition-valued) ancestral process converges to a time-changed Kingman coalescent. Again, moment duality implies the convergence in the sense of finite dimensional distributions of  the frequency process to a Wright-Fisher diffusion.
Table \ref{enbref} summarizes these results. 
\begin{table}
 \begin{center}
 \begin{tabular}{| c | c | c |}
  \hline
  & Drastic  & Soft \\
  \hline
  & &\\
Short & $S$-coalescent & Continuous time-rescaling of the Kingman coalescent \\
  & &\\
  \hline
    & &\\
Long & Drastic bottleneck coalescent & Subordinated Kingman coalescent \\
  & &\\
  \hline
\end{tabular}
\end{center}
\caption{Limiting genealogies for the different types of Wright-Fisher models with bottlenecks.}\label{enbref}
\end{table}

\subsection{Outline}
We start the core of this article by a complete study of the symmetric coalescent.
More precisely, in Section \ref{charac}, we prove Theorem \ref{propXiF}.
In Section \ref{ratelength}, we study asymptotics of the total coalescent rates (Proposition \ref{totalrate}) and the tree length in the special case of $(\beta,S)$-coalescents.
In Section \ref{duality1}, we establish a first duality result between the $S$-coalescent and the Wright-Fisher diffusion with short drastic bottlenecks.
In Section \ref{topology} we introduce the new Skorokhod type metric, that will be used in the last two sections, which are devoted to the study of other models with bottlenecks and their genealogies:
Section \ref{drastic} for long drastic bottlenecks and Section \ref{soft} for soft bottlenecks, where time-changed Kingman coalescents appear as limiting genealogies.

\section{The symmetric coalescent}
\label{Scoalescent}
We will start by considering bottlenecks that are drastic and short i.e. bottlenecks that only last for one generation and in which the population size during the bottleneck does not tend to infinity as $N\to \infty$. More precisely we consider the following model (that is a special case of Definition \ref{def2}).

\begin{definition}[Wright-Fisher model with short drastic bottlenecks]
Fix  $\alpha \in(0,1]$, $N \in \N$, $k^{(N)} \in (0, N^\alpha)$ and $F^0$ a probability measure on $\N$.  Let  $\{F_g\}_{g \in \Z_+}$ be a sequence of i.i.d. random variables of law $F^0$. 
Also, let $\{B^N_g\}_{g \in \Z_+}$ be a sequence of i.i.d. Bernoulli random variables of parameter $k^{(N)}/N^{\alpha}$. 
The Wright-Fisher model with short drastic bottlenecks is such that the sequence of population sizes $\{R^N_g\}_{g \in \Z_+}$ is given by $$\ R^N_g=N(1-B_g^N)+\min(N, F_g)B_g^N.$$
\label{shortdrastic1}
\end{definition}

\begin{remark}
In this case, the bottlenecks are short and  if the $i$-{th} bottleneck takes place during generation $g$, $b_{i,N}N = \min(N,F_g)$, which does not tend to infinity when $N$ goes to infinity, so the bottlenecks are drastic. 
In addition, $s_{i,N}$, the time between two bottlenecks follows a geometric distribution of parameter  $k^{(N)}/N^{\alpha}$, so {if $k^{(N)}=O(1)$, the expectation of $s_{i,N}/N$ is of order} $N^{\alpha}/N$. Thus, when $\alpha < 1$, the bottlenecks are frequent and when $\alpha = 1$ the bottlenecks are rare{; the main consequence  of this is that in the case of rare bottlenecks there is a Kingman/Wright-Fisher component, while in the case of frequent bottlenecks \textit{there is not enough time} for the Kingman part to be a part of the scaling limit.}
\end{remark}

As we will prove, when $N\to \infty$ and time is rescaled by $N^{\alpha}$, the genealogy of this model is described by the symmetric coalescent. It is now time to study this process. 
\subsection{Characterization}\label{charac}
Let us start with the proof of Theorem \ref{propXiF}.

\begin{proof}[Proof of Theorem \ref{propXiF}]
From \eqref{StoF}, we decompose $F$ (resp. $S$) into a `Kingman part' and a `simultaneous multiple collisions' part, i.e. $F = a \delta_0 + F^0$ where $a := F(0) \ge 0$ and $F^0(0) = 0$ (resp. $S = a \delta_{(0,0, \dots)} + S^0$).

We start by proving that any $\Xi$-coalescent that is characterized by a measure $S$ on $\Delta^{sym}$ as above is symmetric. 
We fix $b\ge 2$ and  $k_1, \ldots, k_r$ and $k'_1, \ldots, k'_r$ such that  $\sum_{i=1}^r k_i = \sum_{i=1}^r k'_i = b$. 
From Theorem 2 in \cite{S2000}, the transition rates can be written as follows: 
\begin{align*}
\lambda_{b, (k_1, \dots, k_r)} &= a\mathds{1}_{\{r = b-1\}}+ \int_{\Delta} \sum_{i_1 \ne \dots \ne i_r} \zeta^{k_1}_{i_1} \dots \zeta^{k_r}_{i_r} \  \frac{S^0(d\zeta)}{(\zeta, \zeta)}\\
&=a\mathds{1}_{\{r = b-1\}}+  \sum_{j = r}^\infty   \frac{j!}{(j-r)!} \left(\frac1j\right)^b F^0(j)\\
& = \lambda_{b, (k'_1, \dots, k'_r)}.
\end{align*}

Conversely, suppose that a $\Xi$-coalescent satisfies the symmetric condition on its transition rates.  We write $\Xi = a \delta_{(0,0, \dots)} + \Xi^0$.
For any $\zeta \in \Delta$, we set $\zeta_0 = 1-\sum_{i=1}^\infty \zeta_i $. We define
$$Z = \{\zeta \in \Delta, \ \exists j,i, \ \zeta_i > \zeta_j > 0 \}$$ 
and we assume that $\Xi^0(Z) >0$.
Using Theorem 2 in \cite{S2000}, we have
\begin{align*}
 \lambda_{4,(2,2)}  \ &= \ \int_\Delta \sum_{i_1\ne i_{2}}  \zeta_{i_1}^2  \zeta_{i_2}^{2} \frac{\Xi^0(d\zeta)}{(\zeta,\zeta)}
\  = \ 2 \int_\Delta \sum_{i_1< i_{2}} \ \zeta_{i_1}^2 \zeta_{i_2}^{2}\frac{\Xi^0(d\zeta)}{(\zeta,\zeta)}
\end{align*}
and 
\begin{align*}
 \lambda_{4,(3,1)} \ &= \  \int_\Delta \left( \sum_{i_1\ne i_{2}}  \zeta_{i_1}^3   \zeta_{i_2}  +  \zeta_0 \sum_{j} \zeta_j^3 \right)\frac{\Xi^0(d\zeta)}{(\zeta,\zeta)}\\
& \ge  \int_\Delta \left(\sum_{i_1 < i_{2}}  \zeta_{i_1}^3 \zeta_{i_2} + \sum_{i_1 < i_{2}} \zeta_{i_1}  \zeta_{i_2}^3 \right) \frac{\Xi^0(d\zeta)}{(\zeta,\zeta)}.
\end{align*}
So,
\begin{align*}
\lambda_{4,(3,1)} - \lambda_{4,(2,2)} \ &\ge \int_\Delta \left( \sum_{i_1 < i_{2}}  \zeta_{i_1}^3  \zeta_{i_2} + \sum_{i_1 < i_{2}}  \zeta_{i_1}  \zeta_{i_2}^3-2\sum_{i_1 < i_{2}} \zeta_{i_1} ^2 \zeta_{i_2}^2 \right) \frac{\Xi^0(d\zeta)}{(\zeta,\zeta)}\\
& = \int_\Delta \sum_{i_1 < i_{2}}  \zeta_{i_1}  \zeta_{i_2} (  \zeta_{i_1} - \zeta_{i_2})^2 \frac{\Xi^0(d\zeta)}{(\zeta,\zeta)} \\
& = \int_Z \sum_{i_1 < i_{2}}  \zeta_{i_1}  \zeta_{i_2} (  \zeta_{i_1} - \zeta_{i_2})^2 \frac{\Xi^0(d\zeta)}{(\zeta,\zeta)} >0,
\end{align*}
as the integrand is equal to zero on $\Delta \setminus Z$ and strictly positive on $Z$.
This cannot be true (as the coalescent is symmetric). So we need $\Xi^0(Z) =0$, i.e.  $\Xi^0$ can only take positive values on  $\Delta \setminus Z$, i.e. elements of $\Delta$ such that there exists $ 0< u \le 1$ with $\zeta = (u, u, \dots, u, 0, 0,\dots)$.

Now, we consider the set
 $$Z_0 = \{\zeta \in \Delta \setminus Z, \ \zeta_0 > 0 \} $$ 
and we  assume that  $\Xi^0(Z) = 0$ and $\Xi^0({Z_0}) >0$. We have 
$$ \lambda_{4,(2,2)} =  \int_{\Delta\setminus Z} \sum_{i_1\ne i_{2}}  \zeta_{i_1}^2  \zeta_{i_2}^{2} \frac{\Xi^0(d\zeta)}{(\zeta,\zeta)}$$
and $$\lambda_{4,(3,1)}  =  \int_{\Delta\setminus Z} \left( \sum_{i_1\ne i_{2}}  \zeta_{i_1}^3   \zeta_{i_2}  +  \zeta_0 \sum_{j} \zeta_j^3 \right)\frac{\Xi^0(d\zeta)}{(\zeta,\zeta)}.
$$
Recall that, if $\zeta \in \Delta \setminus Z$, then $\forall i \ge 1, \zeta_i = \zeta_1$ or $\zeta_i = 0$, so $\sum_{i_1\ne i_{2}}  \zeta_{i_1}^2  \zeta_{i_2}^{2} = \sum_{i_1\ne i_{2}}  \zeta_{i_1}^3   \zeta_{i_2} $.
So $ \lambda_{4,(2,2)} = \lambda_{4,(3,1)}$ if and only if
 $\Xi^0(Z_0) = 0$, which means that $\Xi^0$ can only put weight on elements of $(\Delta \setminus Z )\setminus Z_0$, i.e. elements of $\Delta^{sym}$. This completes the proof. 
\end{proof}

As we shall see in Section \ref{duality1}, the symmetric coalescent characterized by a measure $F = a \delta_0 + F^0$, where $F^0$ is a probability measure,  describes the genealogy of a Wright-Fisher model with short drastic bottlenecks parametrized by $\alpha \in (0,1]$,  $N$, $k^{(N)} = 1$ and $F^0$, in the limit when $N \to \infty$. The case $a = 0$ corresponds to frequent bottlenecks ($\alpha <1$) and the case $a = 1$  corresponds to rare bottlenecks ($\alpha =1$). In fact, when time is rescaled by $N^{\alpha}$, if the bottlenecks are frequent, in the limiting genealogy we only see coalescent events taking place during the bottlenecks, whereas if the bottlenecks are rare, there is a `Kingman part' in the limiting genealogy, corresponding to coalescence events taking place outside the bottlenecks. 
We will also discuss in this section a model where the limit genealogical process is a symmetric coalescent characterized by a measure $F$ that is not finite.
Indeed, we show that, when $N\to \infty$, the Wright-Fisher model with short drastic bottlenecks converges to a diffusion with jumps that is moment dual to the block-counting process of the symmetric coalescent.

 \subsection{Tree length and total coalescence rate of $(\beta, S)$-coalescents}\label{ratelength}

We now focus on the family of $(\beta,S)$-coalescents. In this case, the total coalescence rate \eqref{tcr} is
\begin{align}\label{lambdaen}
\lambda_n \ = \ \sum_{k = 1}^{\infty} k^{-\beta} \p (\mathcal{C}^k_n),
\end{align}
where $\mathcal{C}^k_n$ is the event that, in the paintbox construction with $k$ boxes and $n$ balls, there are at least two balls that are allocated to the same box. 
For $n>k$, $\p (\mathcal{C}^k_n) = 1$ and for $n\le k$, 
$$\p (\mathcal{C}^k_n) = 1 - \prod_{i = 2}^n \frac{k+1-i}{k}.$$
In fact, the probability of $\mathcal{C}^k_n$ is 1 minus the probability that $n$ successive balls are allocated to distinct boxes, which can be computed in the following way: the first ball is allocated to any box and then, for $2\le i\le n$,  the $i$-{th} ball is allocated to one of the $k+1-i$ empty boxes. 
We are now ready to prove Proposition \ref{totalrate}.

 \begin{proof}[Proof of Proposition \ref{totalrate}] 
We first treat the case $\beta\in(0,1)$.
Fix $0<\epsilon<1$. We divide \eqref{lambdaen} into two parts:
$$\lambda_n=\sum_{k = 1}^{\lfloor n^{1+\epsilon}\rfloor-1} k^{-\beta}\p (\mathcal{C}^k_n)
+\sum_{k =\lfloor n^{1+\epsilon}\rfloor}^\infty k^{-\beta}\p (\mathcal{C}^k_n).$$
For the second term, we have
 \begin{align*}
 \sum_{k = \lfloor n^{1+\epsilon}\rfloor}^{\infty} k^{-\beta}\left(1 - \prod_{i = 1}^{n-1} \left(1-\frac{i}{k}\right)\right)
 &\sim\sum_{k = \lfloor n^{1+\epsilon}\rfloor}^{\infty} k^{-\beta}\left(1 -\exp\left(-\sum_{i = 1}^{n-1} \frac{i}{k}\right)\right)\\
 &\sim\sum_{k = \lfloor n^{1+\epsilon}\rfloor}^{\infty} k^{-\beta}\left(1 -\exp\left(-\frac{n^2}{2k}\right)\right)\\
 &\sim n^{2-2\beta}\int_0^\infty x^{-\beta}\left(1 -\exp\left(-\frac{1}{2x}\right)\right)dx\\
 &= \frac{2^{\beta-1}\Gamma(\beta)}{1-\beta}n^{2(1-\beta)},
 \end{align*}
 where the last equality is obtained by integrating by parts and using the inverse-gamma distribution.
 
 For the first term, observe that  
 $$\sum_{k = 1}^{\lfloor n^{1+\epsilon}\rfloor} k^{-\beta}\p (\mathcal{C}^k_n)
 \le\sum_{k = 1}^{\lfloor n^{1+\epsilon}\rfloor} k^{-\beta}
 \sim n^{(1+\epsilon)(1-\beta)},$$
 which is negligible compared to the second term.

 Let us now suppose that $\beta=1$.
 We divide \eqref{lambdaen} into three parts (recall that $\p (\mathcal{C}^k_n)=1$ when $k\le n$).
 $$\lambda_n=\sum_{k = 1}^{n-1} k^{-1}
 +\sum_{k = n}^{\lfloor n^{1+\epsilon}\rfloor-1} k^{-1}\p (\mathcal{C}^k_n)
+\sum_{k = \lfloor n^{1+\epsilon}\rfloor}^\infty k^{-1}\p (\mathcal{C}^k_n).$$
The first term is obviously equivalent to $\log n$. 
The second term is clearly smaller than $\epsilon\log n$. Let us now find a lower bound
 \begin{align*}
 \sum_{k = n}^{\lfloor n^{1+\epsilon}\rfloor-1} k^{-1}\left(1 - \prod_{i = 1}^{n-1} \left(1-\frac{i}{k}\right)\right)
&\ge \sum_{k = n}^{\lfloor n^{1+\epsilon}\rfloor-1} k^{-1}\left(1 - \exp\left(\sum_{i = 1}^{n-1} \left(-\frac{i}{k}\right)\right)\right)\\
 &\sim\int_{n^{-1}}^{n^{\epsilon-1}} x^{-1}\left(1 -\exp\left(-\frac{1}{2x}\right)\right)dx\\
&=\int_{n^{1-\epsilon}}^{n} y^{-1}\left(1 -\exp\left(-\frac{y}{2}\right)\right)dy\\
 &= \log n\left(1 -\exp\left(-\frac{n}{2}\right)\right)-(1-\epsilon)\log n\left(1 -\exp\left(-\frac{n^{1-\epsilon}}{2}\right)\right)\\&+\frac{1}{2}\int_{n^{1-\epsilon}}^n \log y\exp\left(-\frac{y}{2}\right)dy\\
 &\sim \epsilon\log n.
\end{align*}

For the third term, we use similar computations as in the case $\beta<1$,
 \begin{align*}
 \sum_{k = \lfloor n^{1+\epsilon}\rfloor}^{\infty} k^{-1}\left(1 - \prod_{i = 1}^{n-1} \left(1-\frac{i}{k}\right)\right)
 &\sim\int_{n^{\epsilon-1}}^\infty x^{-1}\left(1 -\exp\left(-\frac{1}{2x}\right)\right)dx\\
&=\int_0^{n^{1-\epsilon}} y^{-1}\left(1 -\exp\left(-\frac{y}{2}\right)\right)dy\\
 &= (1-\epsilon)\log n\left(1 -\exp\left(-\frac{n^{1-\epsilon}}{2}\right)\right)+\frac{1}{2}\int_0^{n^{1-\epsilon}} \log y\exp\left(-\frac{y}{2}\right)dy\\
 &\sim (1-\epsilon)\log n.
 \end{align*}
This ends the proof.
\end{proof}

This result on the total coalescence rate allows us to give a first estimate of the tree length of the $(\beta, S)$-coalescent. Let $L_n$ be the sum of the lengths of all the branches of the tree obtained from a $(\beta,S)$-coalescent started with $n$ lineages and stopped at the first time when there is only one lineage.
\begin{corollary}
For $\beta \in (0,1)$, there exist two positive constants $C'_\beta$ and $c'_\beta$, that  only depend on $\beta$ (and not on $n$), such that for $n$ large enough,
$$c'_\beta n^{(2\beta-1) \vee 0} \ \le \ \E(L_n) \ \le \ C'_\beta n^{(2\beta) \wedge 1}.$$
For $\beta = 1$, there exists a constant $C'_1$ such that
$$\frac{n}{2\log n}(1+o(1))\ \le \ \E(L_n) \ \le \ C'_1 n.$$
\label{coroalpha}
\end{corollary}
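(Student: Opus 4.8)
The plan is to bound $\E(L_n)$ by relating the tree length to the block-counting process $\{N_t\}_{t\ge0}$ and the total coalescence rates $\lambda_n$ computed in Proposition \ref{totalrate}. Writing $T_j$ for the total amount of time the $(\beta,S)$-coalescent spends with exactly $j$ blocks, we have $L_n=\sum_{j=2}^n j\,T_j$, and since the process started from $j$ blocks waits an $\mathrm{Exp}(\lambda_j)$ time before its next jump, $\E(T_j\mid\text{process visits }j)=1/\lambda_j$. The subtlety is that the block-counting process of an $S$-coalescent does \emph{not} decrease by one at each step (it can jump from $j$ directly to some much smaller value), so not every level $j$ is visited. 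I would therefore use the two-sided bound
\begin{equation*}
\sum_{j=2}^{n}\frac{j}{\lambda_j}\,\p(\text{level }j\text{ is visited}) \ = \ \E(L_n),
\end{equation*}
and for the upper bound simply drop the probability ($\le 1$) to get $\E(L_n)\le\sum_{j=2}^n j/\lambda_j$, while for the lower bound it suffices to exhibit a single level (or a suitable range of levels) that is visited with probability bounded below.

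For the upper bound, Proposition \ref{totalrate} gives $\lambda_j\sim \frac{2^{\beta-1}\Gamma(\beta)}{1-\beta} j^{2(1-\beta)}$ for $\beta\in(0,1)$, so $j/\lambda_j$ is of order $j^{2\beta-1}$, and summing over $j\le n$ yields a quantity of order $n^{2\beta}$ when $2\beta-1>-1$ (always true) — more precisely of order $n^{(2\beta)\wedge 1}$ since the exponent $2\beta$ in the sum $\sum j^{2\beta-1}$ gives $n^{2\beta}$ when $\beta<1/2$... wait, $\sum_{j\le n} j^{2\beta-1}\asymp n^{2\beta}$ for $\beta>0$, and this is $\le C'_\beta n^{(2\beta)\wedge 1}$ trivially. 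For $\beta=1$, $\lambda_j\sim 2\log j$ so $j/\lambda_j\asymp j/\log j$ and $\sum_{j\le n} j/\log j\asymp n^2/\log n$; here one must be slightly more careful, but $\E(L_n)\le C_1' n^2/\log n\le C_1' n^2$ — hmm, the statement claims $\le C_1' n$, so actually one needs the visited-level probabilities to be genuinely small for large $j$, not just $\le1$. The key observation is that after the \emph{first} non-Kingman coalescence the $S$-coalescent is a.s. finite (as remarked in the text before Proposition \ref{CDI}), and in fact jumps down dramatically; so the expected contribution of high levels is controlled by $\p(N_t\ge j)$, which decays fast. I would make this rigorous by noting $\E(L_n)=\int_0^\infty \E(N_t)\,dt$ and bounding $\E(N_t)$ via the rate at which $N$ decreases.

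For the lower bound, the cleanest route is: $L_n\ge \sum_{j=2}^n T_j\cdot(\text{something})$, or more simply $\E(L_n)\ge \E(n\,T_n)=n/\lambda_n$, since the process always starts at level $n$ and spends an $\mathrm{Exp}(\lambda_n)$ time there with all $n$ lineages present. This already gives $\E(L_n)\ge n/\lambda_n$, which by Proposition \ref{totalrate} is of order $n^{2\beta-1}$ for $\beta\in(0,1)$ and of order $n/(2\log n)$ for $\beta=1$, matching the claimed lower bounds $c_\beta' n^{(2\beta-1)\vee0}$ (the $\vee 0$ handling $\beta<1/2$, where $n^{2\beta-1}\to0$ and one instead uses the trivial bound $\E(L_n)\ge \E(L_2)\ge$ const, or notes $n^{(2\beta-1)\vee0}=1$ is dominated by a constant) and $\frac{n}{2\log n}(1+o(1))$. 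I expect the main obstacle to be the upper bound in the case $\beta=1$, where the naive $\sum j/\lambda_j$ overshoots and one genuinely needs to exploit that the block-counting process collapses to a finite value almost immediately after leaving infinity — equivalently, controlling $\int_0^\infty\E(N_t)\,dt$ — whereas for $\beta\in(0,1)$ the crude bound $\E(L_n)\le\sum_{j=2}^n j/\lambda_j\le C_\beta' n^{(2\beta)\wedge1}$ is already enough.
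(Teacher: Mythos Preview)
Your lower bound is correct and coincides with the paper's: $\E(L_n)\ge n/\lambda_n$ gives the claimed $n^{(2\beta-1)\vee 0}$ and $n/(2\log n)$ asymptotics, with the trivial constant bound $\E(L_n)\ge \E(L_2)>0$ handling $\beta<1/2$.

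There is a genuine gap in your upper bound. You write that $\sum_{j\le n} j^{2\beta-1}\asymp n^{2\beta}$ is ``$\le C'_\beta n^{(2\beta)\wedge 1}$ trivially'', but this is false for $\beta>1/2$: then $(2\beta)\wedge 1=1<2\beta$, so $n^{2\beta}$ is \emph{not} bounded by a constant times $n$. The crude bound $\E(L_n)\le\sum_{j=2}^n j/\lambda_j$ therefore only yields $n^{(2\beta)\wedge 1}$ when $\beta\le 1/2$; for $\beta\in(1/2,1)$ it overshoots just as badly as in the case $\beta=1$ that you correctly flag. Your diagnosis that ``for $\beta\in(0,1)$ the crude bound is already enough'' is wrong for half the parameter range.

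The paper closes this gap with a one-line coupling that you are missing. Since $F(k)=k^{-\beta}$ gives $F(1)=1$, every $(\beta,S)$-coalescent has rate-$1$ events at which \emph{all} blocks merge into one (the $k=1$ paintbox). The $S^1$-coalescent characterized by $S^1(\xi^k)=\mathds{1}_{\{k=1\}}$ has only these events, so it is star-shaped with $\E(L_n^{S^1})=n$. Coupling the two processes via the same $k=1$ clock, the $(\beta,S)$-coalescent can only coalesce earlier (it has strictly more mergers), hence $L_n\le L_n^{S^1}$ pathwise and $\E(L_n)\le n$ for every $\beta\in(0,1]$. This single comparison delivers the $n^{(2\beta)\wedge 1}=n$ upper bound for all $\beta\ge 1/2$, including $\beta=1$, without any analysis of $\int_0^\infty\E(N_t)\,dt$.
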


As we can see in Figure \ref{lengthalpha}, this corollary provides better estimates of the tree length when $\beta$ is close to 1 or close to 0.
\begin{figure}
\begin{center}
\includegraphics[width=10cm]{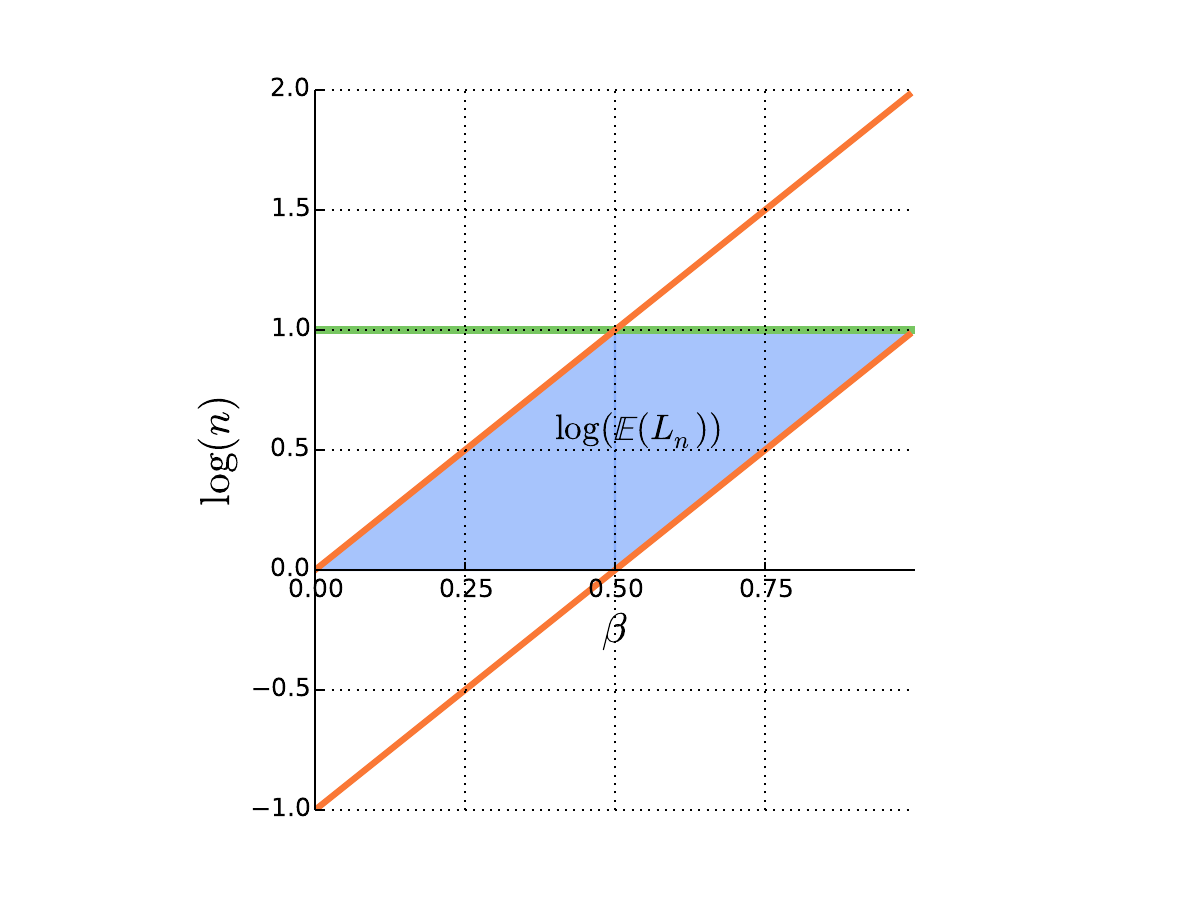}
\caption{Illustration of Corollary \ref{coroalpha}. The orange lines are $y= 2x-1$ and $y= 2x$, and the green line is $y=1$. The blue area is the region where $\log(\E(L_n))$ is located.}
\label{lengthalpha}
\end{center}
\end{figure}

  \begin{proof} 
We start by proving that, for any $\beta \in [0,1]$, the expected tree length is at most of order $n$. We consider the $S^1$-coalescent, characterized by $S^1(\xi^k) = \mathds{1}_{\{k = 1\}}$. First, one can easily show that the tree length of the $S^1$-coalescent is of order $n$ (it is a star-shaped coalescent). Second, the rate of events of size 1 in the $S^1$-coalescent and in the $(\beta, S)$-coalescent is the same and, for $k>1$, $S^1(\xi^k) = 0$ while, in the $(\beta, S)$-coalescent, $S(\xi^k) = k^\beta>0$. It is  not hard to construct a coupling between the two processes in such a way that the length of the $S^1$ coalescent is always larger than the length of the $(\beta, S)$-coalescent.

The expectation of the time to the first coalescence when there are $k$ lineages is $1/ \lambda_k$, so ${k}/{\lambda_k}$ is the expected length of a tree started with $k$ lineages and stopped at the first coalescence event. 
So, for $\beta \in (0,1)$, we have
 $$ \E(L_n) \ \le \ \sum_{k=2}^n  \frac{k}{\lambda_k}.$$
Recall that, in a coalescent where only two blocks can coalesce at a time (for example the Kingman coalescent), the sum on the right hand side would be the expected length, but in a coalescent with simultaneous multiple collisions we do not observe all the states $\{2, \dots, n\}$ for the block-counting process so it is only an upper bound. Using Proposition \ref{totalrate} for $\beta<1$, there exists a constant $c$ such that
\begin{align*}
 \E(L_n) \ &\le \frac{2^{\beta-1}\Gamma(\beta)}{1-\beta} \ \sum_{k=2}^n k^{2\beta-1}(1+o(1)) \\
& \le c \ \int_{1}^n t^{2\beta-1} dt  = \frac c{2\beta} (n^{2\beta} -1),
\end{align*}
which completes the proof of this first step.

\bigskip

 For $\beta \in (0,1]$, for the lower bound, we have
 $$ \E(L_n) \ \ge \  \frac{n}{\lambda_n},$$
 which is the length of the tree stopped at the first coalescence event.
When $0<\beta<1/2$ this lower bound is not interesting, as it is of order $n^{2\beta -1}$ and it decreases with $n$. But $\E(L_n)$ can always be bounded from below by a positive constant, which completes the proof.
\end{proof}
 
 \subsection{Duality with the Wright-Fisher model with short drastic bottlenecks}
\label{duality1}
We consider the Wright-Fisher model with short drastic bottlenecks from Definition \ref{shortdrastic1}.
Imagine that there are two types of individuals, $0$ and $1$, and each individual inherits the type of her parent.  We denote by $\{X^{N}_g\}_ {g\in\Z}$ the process corresponding to the frequency of type $1$ individuals in the population i.e., for any $g \in \Z_+$, 
$$X^N_g = \frac{\sum_{i=1}^{R_g^N} \mathds{1}_{\{(i,g)\textrm{ is of type 1}\}}}{R_g^N}. $$
As in the classical Wright-Fisher model, given {$R^N_{g+1}$ and} $X^N_g$, $R_{g+1}^NX^N_{g+1}$ follows a binomial distribution of parameters {$R_{g+1}^N$} and $X^N_g$. 
 In the following, `$\Longrightarrow$' denotes weak convergence in the Skorokhod on topology $D([0,1], \R_+)$.

\begin{theorem}
Let $F^0$ be a measure on $\N$ that fulfills condition \eqref{Ffinita}.
Fix  $\alpha \in(0,1]$ and $\gamma \in (0,\alpha/2)$.  
We consider the probability measure $ F_\gamma^N$ defined by 
$$ F_{\gamma}^N := \frac{\sum_{k=1}^{\lfloor N^\gamma \rfloor}F^0(k)\delta_k}{\sum_{k=1}^{\lfloor N^\gamma \rfloor}F^0(k)}.$$
Consider the sequence of processes  $\{X^N\}_{N \in \N}$, such that $X^N=\{X^{N}_g\}_ {g\in\Z_+}$ is the frequency process associated with the Wright-Fisher model with short drastic bottlenecks parametrized by $\alpha$,  $N$, $k^{(N)} = \sum_{k=1}^{\lfloor N^\gamma \rfloor}F^0(k)$ and $F_{\gamma}^N$ (from Definition \ref{shortdrastic1}).  Then,
$$
\{X^N_{\lfloor N^\alpha t \rfloor}\}_{t\ge0}\underset{N\to\infty}{\Longrightarrow} \{X_t\}_{t\ge0},
$$
where $\{X_t\}_{t\ge0}$ is the unique strong solution of the SDE
\begin{equation}
dX_t \ = \ \mathds{1}_{\{\alpha = 1\}} \sqrt{X_t(1-X_t)}dB_t \ + \ \int_{\N}  \int_{[0,1]^\N} \frac1k \sum_{i=1}^k \left( \mathds{1}_{\{u_i \le X_{t^-}\}} - X_{t^-}\right) \hat N(dt, dk, du),
\label{SDE1}
\end{equation}
where  $\{B_t\}_{t\geq0}$ is a standard Brownian motion and $\hat N$ is a compensated Poisson measure on $(0, \infty) \times \N \times [0,1]^{\N}$ with intensity $ds \otimes F^0(k) \otimes du$, where $du$ is the Lebesgue measure on $[0,1]^{\N}$.
\label{SDEs}
\end{theorem}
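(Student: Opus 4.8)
The plan is to establish the convergence in two stages: first a convergence of generators / martingale problems for the Markovian case $\alpha=1$ and the non-Markovian-looking but actually Markovian case $\alpha<1$ (the frequency process $\{X^N_g\}$ is in fact a Markov chain, since $R^N_g$ is an i.i.d. sequence independent of the past conditionally on nothing, so $X^N_{g+1}$ depends on the past only through $X^N_g$), and then upgrade to weak convergence in the Skorokhod space $D([0,1],\R_+)$ by verifying tightness. The identification of the limit as the unique strong solution of \eqref{SDE1} will then follow from a standard uniqueness result for SDEs driven by Poisson measures with Lipschitz-type coefficients; existence and pathwise uniqueness for \eqref{SDE1} will be recorded separately (this is where condition \eqref{Ffinita} enters, guaranteeing the jump rate $\sum_k F^0(k)<\infty$ after the Kingman part is split off, so the jumps form a genuine Poisson point process with finite intensity on each bounded time interval).

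First I would compute the one-step transition operator of $X^N$. Conditionally on $\{R^N_{g+1}=N\}$ (no bottleneck, probability $1-k^{(N)}/N^\alpha$), $R^N_{g+1}X^N_{g+1}$ is $\mathrm{Binomial}(N,X^N_g)$, which is the classical Wright-Fisher step and contributes, after time-rescaling by $N^\alpha$, a term of order $N^{1-\alpha}\cdot\tfrac1N x(1-x)\partial^2$ to the generator; this vanishes unless $\alpha=1$, in which case it produces exactly $\tfrac12 x(1-x)f''(x)$. Conditionally on a bottleneck $\{R^N_{g+1}=\min(N,F_{g+1})\}$, which happens with probability $k^{(N)}/N^\alpha$ and given $F_{g+1}=k$ (drawn from $F^N_\gamma$, i.e. essentially from $F^0$ restricted to $k\le N^\gamma\ll N$, so $\min(N,k)=k$), the next frequency is: first sample $k$ parents uniformly, each type $1$ with probability $X^N_g$ independently — this gives the post-bottleneck frequency $\tfrac1k\sum_{i=1}^k \mathds 1_{\{u_i\le X^N_g\}}$ — and then the regrowth to size $N$ is a resampling that, for large $N$, does not change the frequency (the binomial law of large numbers). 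Multiplying the per-step bottleneck probability $k^{(N)}/N^\alpha$ by the time-acceleration $N^\alpha$ gives total bottleneck rate $k^{(N)}=\sum_{k\le N^\gamma}F^0(k)\to\sum_k F^0(k)$, and conditioning on $k$ with the normalized weights $F^0(k)/k^{(N)}$ recovers, in the limit, the intensity $F^0(k)$ for a $k$-bottleneck. Putting the two pieces together, the rescaled generator $\mathcal L^N f(x) := N^\alpha\,\E[f(X^N_1)-f(x)\mid X^N_0=x]$ converges pointwise (for $f\in C^2[0,1]$, or polynomials, which suffice by duality) to
\[
\mathcal L f(x) = \mathds 1_{\{\alpha=1\}}\tfrac12 x(1-x)f''(x) + \sum_{k\ge1} F^0(k)\int_{[0,1]^k}\!\Big(f\big(x+\tfrac1k\textstyle\sum_{i=1}^k(\mathds 1_{\{u_i\le x\}}-x)\big)-f(x)\Big)\,du,
\]
which is the generator associated to \eqref{SDE1}. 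Making the "regrowth does not move the frequency" step rigorous — controlling the binomial fluctuation uniformly and showing the truncation at $N^\gamma$ with $\gamma<\alpha/2$ is harmless — is the routine-but-careful part; the choice $\gamma<\alpha/2$ is what keeps the error from the excluded large-$k$ bottlenecks and from $\min(N,k)\ne k$ negligible after multiplication by $N^\alpha$.

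The main obstacle, and the step I would spend the most care on, is \textbf{tightness} of $\{X^N_{\lfloor N^\alpha\cdot\rfloor}\}_N$ in $D([0,1],\R_+)$, hence genuine Skorokhod ($J_1$) convergence rather than merely finite-dimensional convergence. The complication is that a single bottleneck produces an $O(1)$ jump in the frequency, so the limit process has jumps and one must rule out accumulation of such jumps on short time intervals and control the oscillation between them. I would use the Aldous–Rebolledo criterion: write $X^N$ as a martingale plus a finite-variation (compensator) part, show the predictable quadratic variation and the compensator are tight (both have, per rescaled unit time, $O(1)$ expected increments coming from a Poisson-like number of bottlenecks with uniformly bounded jump sizes, plus the $\mathds1_{\{\alpha=1\}}$ diffusive part of order $x(1-x)$), and verify the stopping-time oscillation bound. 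Since the jumps are uniformly bounded by $1$ and the jump rate is uniformly bounded by $k^{(N)}\le\sum_k F^0(k)<\infty$, this goes through. Finally, any subsequential limit solves the martingale problem for $\mathcal L$; by the pathwise uniqueness of \eqref{SDE1} (Lipschitz estimates on the jump kernel in $L^2$, using $\int_{[0,1]^k}|\tfrac1k\sum_i(\mathds1_{\{u_i\le x\}}-\mathds1_{\{u_i\le y\}})|^2du \le \tfrac1k|x-y|\le|x-y|$ and Gronwall) the martingale problem is well-posed, so the full sequence converges to $\{X_t\}$. The moment-duality to the $S$-coalescent block-counting process, used elsewhere in the paper, can be checked at the level of generators: applying $\mathcal L$ to $x\mapsto x^n$ reproduces the rates of Proposition \ref{ratesblockcounting}, confirming that \eqref{SDE1} is dual to the symmetric coalescent characterized by $F=\delta_0\mathds1_{\{\alpha=1\}}+F^0$.
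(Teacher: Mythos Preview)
Your approach is correct and shares the same backbone as the paper's: both compute the rescaled one-step operator $\mathcal{A}^N f(x) = N^\alpha\,\E[f(X^N_1)-f(x)\mid X^N_0=x]$, split it into a non-bottleneck part (yielding $\tfrac12 x(1-x)f''(x)$ when $\alpha=1$ and vanishing otherwise) and a bottleneck part (yielding the jump operator), and show uniform convergence to the generator $\mathcal{A}$ of \eqref{SDE1}. The difference lies in how one upgrades to weak convergence in $D([0,1],\R_+)$. The paper simply invokes Theorems~19.25 and~19.28 of Kallenberg, which package tightness and identification of the limit into a single result: uniform convergence of $\mathcal{A}^N f$ to $\mathcal{A}f$ on a core of the Feller generator $\mathcal{A}$ gives Skorokhod convergence directly, with no separate tightness argument needed. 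You instead propose to verify tightness by hand via Aldous--Rebolledo and then identify the limit through well-posedness of the martingale problem. This is more laborious but entirely valid, and arguably more transparent about where the analytic work lies; the paper's route is shorter but relies on recognising that the limit is Feller and that the cited theorems apply.

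One minor point: your concern about a ``regrowth to size $N$'' step after the bottleneck is unnecessary at the level of the one-step generator. In this model each generation samples parents from the previous one, whose type-1 frequency is $x$; so the bottleneck step directly produces the new frequency $\tfrac1k\sum_{i=1}^k B_i^x$, and the return to size $N$ is just the \emph{next} ordinary step of the chain, already accounted for by the non-bottleneck term of $\mathcal{A}^N$. The paper's two-line generator display makes this explicit and avoids any binomial-LLN argument for regrowth.
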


The same result holds if we consider $F^0$, a probability measure on $\N$ and the Wright-Fisher model with short drastic bottlenecks parametrized by $\alpha$, $N$, $k^{(N)} = 1$ and $F^0$. 

\begin{remark}
The definition of $F_{\gamma}^N$ ensures that $F_{\gamma}^N$ is a probability measure on $\N$ and $k^{(N)}/N^\alpha\in [0,1]$, so the Wright-Fisher model with short drastic bottlenecks is well-defined, at least for $N$ large enough. In fact, if $F^0$ satisfies  condition \eqref{Ffinita}, for $k$ large enough $F(k) <k$, so $k^{(N)} \le N^{2\gamma} + C$, where $C$ is a constant.
\label{rem3}
\end{remark}

In words,  the frequency process associated with the Wright-Fisher model with short drastic bottlenecks converges  to a diffusion with jumps that is similar to the frequency process associated with  $\Xi$-Fleming-Viot process (where the characterising measure $\Xi$ is the measure $S$ on $\Delta^{sym}$ that can be obtained from $F^0$ as in Theorem \ref{propXiF}). When the bottlenecks are frequent ($\alpha <1$), the limiting process is a pure jump process whereas,  when the bottlenecks are rare ($\alpha = 1$), we also have a diffusion term, which is a Wright-Fisher diffusion and corresponds to the evolution of the population outside the bottlenecks.
Before proving Theorem \ref{SDEs} we shall make sure that a solution to Equation \eqref{SDE1} exists.

\begin{lemma}
For any measure $F^0$  in $\N$ that satisfies condition \eqref{Ffinita}  and any $\alpha \in (0,1]$, there exists a unique strong solution to the SDE \eqref{SDE1}.
\label{lemmaexistence}
\end{lemma}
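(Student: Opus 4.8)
The plan is to establish existence and uniqueness of a strong solution to \eqref{SDE1} by casting it as a stochastic differential equation driven by a Poisson random measure with a jump coefficient that is Lipschitz in the natural sense, and then invoking a standard existence-uniqueness theorem for such SDEs (e.g.\ the general results of Ikeda-Watanabe, or the versions tailored to measure-valued/Fleming-Viot-type equations as in \cite{BBMST}). First I would rewrite the jump term: for fixed $k$ and $u=(u_1,\dots,u_k)\in[0,1]^k$, define
\begin{equation*}
g_k(x,u) \ = \ \frac1k\sum_{i=1}^k\left(\mathds{1}_{\{u_i\le x\}}-x\right),
\end{equation*}
so that the equation reads $dX_t = \mathds{1}_{\{\alpha=1\}}\sqrt{X_t(1-X_t)}\,dB_t + \int_\N\int_{[0,1]^\N} g_k(X_{t^-},u)\,\hat N(dt,dk,du)$. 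The diffusion coefficient $\sigma(x)=\sqrt{x(1-x)}$ is the classical Wright-Fisher coefficient, which is $\tfrac12$-Hölder but is well known to produce a unique strong solution on $[0,1]$ (Yamada-Watanabe), so the only genuinely new ingredient is controlling the jump part.

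The key estimate is to bound, uniformly over the Poisson intensity, the $L^2$-distance between jump coefficients evaluated at two points $x,y\in[0,1]$. For a single coordinate, $\int_0^1(\mathds{1}_{\{u_i\le x\}}-\mathds{1}_{\{u_i\le y\}})^2\,du_i = |x-y|$, and $(\mathds{1}_{\{u_i\le x\}}-x)-(\mathds{1}_{\{u_i\le y\}}-y)$ has $L^2([0,1])$ norm squared at most $C|x-y|$. Averaging over $i=1,\dots,k$ and using that the $u_i$ are independent, one gets
\begin{equation*}
\int_{[0,1]^k}\bigl(g_k(x,u)-g_k(y,u)\bigr)^2\,du \ \le \ \frac{C}{k}\,|x-y|,
\end{equation*}
which after integrating against $F^0(dk)$ gives a bound of the form $C\bigl(\sum_{k\ge1}F^0(k)/k\bigr)|x-y|$; this is finite precisely by condition \eqref{Ffinita}. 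Likewise $\int_{[0,1]^k}g_k(x,u)^2\,du\le C/k$, giving the requisite linear-growth (in fact boundedness) bound $\int_\N\int_{[0,1]^\N}g_k(x,u)^2\,du\,F^0(dk)\le C$. With these two bounds in hand, the pathwise-uniqueness argument is the standard one: for two solutions $X,X'$ driven by the same noise, apply It\^o's formula to $|X_t-X'_t|$ (or to a smooth approximation, à la Yamada-Watanabe, to handle the $\tfrac12$-Hölder diffusion term), take expectations so the martingale parts and the compensated jump part vanish, bound the drift-type contributions by $C\,\E|X_s-X'_s|$, and conclude by Gronwall. Existence follows either by a Picard iteration using the same Lipschitz-in-$L^2$ bound, or by a tightness/limit argument; combined with pathwise uniqueness, the Yamada-Watanabe theorem upgrades weak existence plus pathwise uniqueness to the existence of a unique strong solution. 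One should also check that the solution stays in $[0,1]$: the diffusion coefficient vanishes at the boundary, and each jump $x\mapsto x+g_k(x,u)=\frac1k\sum_i\mathds{1}_{\{u_i\le x\}}\in[0,1]$ maps $[0,1]$ into itself, so $[0,1]$ is invariant.

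The main obstacle I anticipate is the interaction between the non-Lipschitz diffusion coefficient $\sqrt{x(1-x)}$ and the jumps: one cannot directly apply the textbook Lipschitz SDE-with-jumps theorem, and one cannot directly apply the classical one-dimensional diffusion comparison/Yamada-Watanabe theory either, since that is stated for continuous SDEs. The cleanest route is to invoke (or adapt) a version of the Yamada-Watanabe pathwise-uniqueness criterion that allows simultaneously a Hölder-$\tfrac12$ continuous diffusion coefficient and a jump coefficient that is Lipschitz in $L^2$; such results exist (see e.g.\ the literature on Fleming-Viot processes and \cite{BBMST}, where precisely this type of SDE is treated as dual to $\Xi$-coalescents), so the proof will mostly consist in verifying the hypotheses via the two moment estimates above and citing the appropriate theorem, rather than reproving the uniqueness machinery from scratch. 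A secondary, purely bookkeeping point is that the compensator of the jump term is finite — it equals $\int_\N F^0(dk)\int g_k(x,u)\,du$, and since $\int_{[0,1]^k}g_k(x,u)\,du=0$ the drift contribution from the compensation is actually zero, so no extra integrability beyond \eqref{Ffinita} is needed for that piece.
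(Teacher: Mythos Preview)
Your approach is correct and is essentially the same as the paper's, only carried out in more detail. The paper dispatches the lemma in one line by citing Lemma~3.6 of \cite{GS}, which in turn rests on Theorem~5.1 of Li--Pu \cite{Li}; that theorem is precisely the Yamada--Watanabe-type result you are looking for, handling a $\tfrac12$-H\"older diffusion coefficient together with a jump coefficient satisfying an $L^2$-Lipschitz condition with respect to the Poisson intensity. The two moment estimates you wrote down --- $\int_{[0,1]^k}(g_k(x,u)-g_k(y,u))^2\,du \le C|x-y|/k$ and $\int_{[0,1]^k} g_k(x,u)^2\,du \le C/k$ --- are exactly conditions (3.b) and (5.a) of \cite{Li}, and the paper in fact verifies them explicitly (in the harder setting of long bottlenecks) in the proof of Lemma~\ref{lemmaexistence2}. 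So your plan is not a different route; it is the content behind the citation. The only adjustment needed is bibliographic: the relevant existence-and-uniqueness machinery is in \cite{Li} (and packaged for this $\Xi$-Fleming--Viot setting in \cite{GS}), not in \cite{BBMST}.
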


\begin{proof}
This result is a direct consequence of Lemma 3.6 in \cite{GS} (which is itself a consequence of Theorem 5.1 in \cite{Li}), applied to the measure $S$ on $\Delta^{sym}$ obtained from $F^0$ as in Proposition \ref{propXiF} and a  drift coefficient  equal to 0. 
\end{proof}

We are now ready to prove Theorem \ref{SDEs}. 

\begin{proof}[Proof of Theorem \ref{SDEs}]
The proof follows closely the proof of Proposition 3.4 in \cite{GS}. The idea is to prove the convergence of the generator of $\{X^N_{\lfloor N^\alpha t\rfloor}\}_{t\ge0}$ to the generator of $\{X_t\}_{t\ge0}$. Provided this claim is true, we can use Theorem 19.25 and 19.28 of \cite{Kallemberg} to prove the weak convergence in the Skorokhod topology.

From Lemma \ref{lemmaexistence}, $\{X_t\}_{t\ge0}$ exists and has generator
 $\mathcal{A}$. Its domain contains twice differentiable functions and  for a function $f \in C^2[0,1]$ and $x \in [0,1]$, we have
 \begin{align}
\mathcal{A}f(x) \ =& \ \mathds{1}_{\{\alpha = 1\}}\frac12x(1-x)f''(x) \ + \ \sum_{k\ge 1} F^0(k)  \E\left(f\left(\frac{\sum_{i=1}^{k}B_i^x}{k}\right) - f(x)\right),
\label{generatorA}
\end{align}
where the $B_i^x$'s are independent Bernoulli random variables of parameter $x$ and the second term is the generator of a $\Xi$-Fleming-Viot process, see for example formula (5.6) in \cite{BBMST} (applied to the measure $S$ associated with $F^0$).

For every $N \in \N$, let $\mathcal{U}^N$ be the transition operator associated with $X^N$ and define the operator 
\begin{equation}
\mathcal{A}^N := N^\alpha (\mathcal{U}^N - I),
\label{discretegenerator} 
\end{equation}
where $I$ is the identity operator (see Theorem 19.28 in \cite{Kallemberg}). $\mathcal{A}^N$ is referred to as the discrete generator of $\{X^N_{\lfloor N^{\alpha} t \rfloor}\}_{t\ge0}$. For any function $f\in C^2[0,1]$ in $x \in [0,1]$ we have
\begin{align}
\mathcal{A}^Nf(x) \ =& \ (1 - \frac{\sum_{k=1}^{\lfloor N^\gamma \rfloor}F^0(k)}{N^{\alpha}})  N^{\alpha}  \E \left( f\left(\frac{\sum_{i=1}^NB_i^x}{N}\right) - f(x)\right)  \label{gen1}\\ 
+& \  N^{\alpha} \frac{\sum_{k=1}^{\lfloor N^\gamma \rfloor}F^0(k)}{N^{\alpha}} \sum_{k= 1}^{\lfloor N^\gamma \rfloor} \frac{F^0(k)}{\sum_{k=1}^{\lfloor N^\gamma \rfloor}F^0(k)}  \E\left(f\left(\frac{\sum_{i=1}^{\min(N,k)}B_i^x}{\min(N,k)}\right) - f(x)\right). \label{gen2}
\end{align}
First, we study part \eqref{gen1}. Following Remark \ref{rem3}, the prefactor converges to 1.  When $\alpha = 1$, it is well known that \eqref{gen1} converges uniformly as $N\to \infty$ to $\frac12x(1-x)f''(x)$, which is the generator of the Wright-Fisher diffusion (see for example Chapter 2 and Theorem 3.6 in \cite{E}). 
When $\alpha <1$ this term becomes of order $N^{\alpha-1}$ and therefore converges to $0$. 
Second, it is easy to see that part \eqref{gen2} converges when $N\to \infty$ to the second term of $\mathcal{A}$ in  \eqref{generatorA}.
Combining these two results, we have $\mathcal{A}^Nf \to \mathcal{A}f$ uniformly. 
\end{proof}

Let us consider $\{N_t\}_{t\ge0}$, the block-counting process of the symmetric coalescent characterized by $F = \mathds{1}_{\{\alpha = 1\}} \delta_0 + F^0$ (or by the associated measure $S$ on $\Delta^{sym}$, as defined in Proposition \ref{propXiF}).
 We have the following duality relation between the block-counting process of the symmetric coalescent and  $\{X_t\}_{t\ge0}$, the unique strong solution of \eqref{SDE1}.
\begin{theorem}
For every $x \in [0,1], \ n \in \N$, we have
$$\E(X_t^n \vert X_0 = x) \ = \ \E (x^{N_t} \vert N_0 = n).$$
\label{momentduality}
\end{theorem}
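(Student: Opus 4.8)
The plan is to establish the moment duality via the standard generator-comparison argument. Recall that $\{X_t\}_{t\ge0}$ has generator $\mathcal{A}$ given in \eqref{generatorA}, and let $\mathcal{B}$ denote the generator of the block-counting process $\{N_t\}_{t\ge0}$, whose jump rates $q_{ij}$ from $i$ to $j<i$ are given explicitly in Proposition \ref{ratesblockcounting}. Define $H(x,n) := x^n$ for $x\in[0,1]$, $n\in\N$. The key computation is to verify the pointwise identity
\begin{equation*}
\mathcal{A}H(\cdot,n)(x) \ = \ \mathcal{B}H(x,\cdot)(n)
\end{equation*}
for all $x\in[0,1]$ and $n\in\N$. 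First I would compute the left-hand side: plugging $f(x)=x^n$ into \eqref{generatorA}, the Kingman term gives $\mathds{1}_{\{\alpha=1\}}\frac12 n(n-1)(x^{n-1}-x^n)$, and the jump term gives $\sum_{k\ge1}F^0(k)\,\E\big[(\tfrac1k\sum_{i=1}^k B_i^x)^n - x^n\big]$. Then I would compute the right-hand side using the paintbox description: when there are $n$ lineages, at rate $F(k)$ we throw $n$ balls into $k$ boxes; the dual identity should come from the elementary probabilistic fact that if $B_1^x,\dots,B_k^x$ are i.i.d.\ Bernoulli$(x)$ attached to the $k$ boxes and $W^{k,n}$ is the (random) number of occupied boxes, then $\E\big[(\tfrac1k\sum_{i=1}^k B_i^x)^n\big] = \E\big[\,\E[\,x^{W^{k,n}}\mid \text{occupied boxes}\,]\,\big]$ — indeed $(\tfrac1k\sum B_i^x)^n$ is the probability that $n$ i.i.d.\ uniform picks among the $k$ boxes all land in type-$1$ boxes, which conditionally on the configuration equals $x$ raised to the number of distinct occupied boxes. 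This matches $\sum_k F(k)\sum_j \p(W^{k,n}=j)\,x^j$, which is exactly $\mathcal{B}H(x,\cdot)(n)$ read off Proposition \ref{ratesblockcounting}; the $\mathds{1}_{\{\alpha=1\}}$ Kingman term matches the $a\binom{i}{2}\mathds{1}_{\{j=i-1\}}$ piece with $a=\mathds{1}_{\{\alpha=1\}}$.

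Once the generator identity holds, the duality $\E(X_t^n\mid X_0=x)=\E(x^{N_t}\mid N_0=n)$ follows from the general duality machinery: one checks that $H$ is in the domain of both generators (it is $C^\infty$ in $x$, and bounded uniformly since $0\le H\le 1$, so no integrability issue arises for the jump operators — here condition \eqref{Ffinita}, i.e.\ finiteness of $S$, guarantees $\mathcal{B}$ is a genuine bounded-rate jump generator on each level and $\mathcal{A}$ is well-defined by Lemma \ref{lemmaexistence}), and then invokes a standard duality theorem (e.g.\ Ethier--Kurtz, or the version in \cite{JK} / \cite{BBMST}) stating that if $\mathcal{A}H(\cdot,n)(x)=\mathcal{B}H(x,\cdot)(n)$ for all $(x,n)$ and the requisite regularity holds, then the two semigroups are dual with respect to $H$. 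I would also note that since $X$ takes values in $[0,1]$ and $N$ in $\N$, $H$ is bounded, which makes the uniqueness/well-posedness hypotheses of the duality theorem easy to verify, and strong uniqueness of \eqref{SDE1} is already granted by Lemma \ref{lemmaexistence}.

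The main obstacle I anticipate is not conceptual but bookkeeping: carefully matching the combinatorial expansion of $\E\big[(\tfrac1k\sum_{i=1}^k B_i^x)^n\big]$ with the occupancy-distribution formula $\p(W^{k,i}=j)=\binom{k}{j}(j/k)^i\sum_{r=0}^j(-1)^r\binom{j}{r}(1-r/j)^i$, and confirming the Kingman-part correspondence (the factor $\binom{n}{2}$ versus $\frac12 n(n-1)$, and the sign/direction of the jump $j=n-1$). A clean way to sidestep the messy inclusion--exclusion is the conditional-probability argument above: interpret $x^n$-type quantities as probabilities that certain i.i.d.\ uniform labels fall in the type-$1$ region, condition on the paintbox partition, and recognize both sides as the expectation over that partition of $x^{(\#\text{blocks})}$; this reduces the identity to the tautology that the forward and backward constructions use the \emph{same} paintbox. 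Finally, one should remark that the statement as phrased uses $F = \mathds{1}_{\{\alpha=1\}}\delta_0 + F^0$, so the $a=\mathds{1}_{\{\alpha=1\}}$ substitution is exactly what couples the SDE's diffusion indicator to the coalescent's Kingman component, closing the argument.
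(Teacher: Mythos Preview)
Your proposal is correct and follows the same overall strategy as the paper: establish the generator identity $\mathcal{A}H(\cdot,n)(x) = \mathcal{G}H(x,\cdot)(n)$ for $H(x,n)=x^n$, then conclude by a standard duality argument. The key combinatorial fact underlying both proofs is
\[
\E\Big[\Big(\tfrac1k\sum_{i=1}^k B_i^x\Big)^n\Big] \ = \ \E\big[x^{W^{k,n}}\big].
\]
Where you differ is in how you reach and justify this identity. The paper does not start from the explicit form \eqref{generatorA}; instead it invokes Lemma~4.1 of \cite{GS}, which represents $\mathcal{A}$ through auxiliary random variables $(Z,U,V)$, and then performs two integrations by parts to recover (the jump part of) \eqref{generatorA} before applying a multinomial expansion (displays \eqref{dualitybinom1}--\eqref{dualitybinom2}) to obtain the identity above. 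Your route is more direct: you take \eqref{generatorA} as given and prove the identity by a clean Fubini argument --- color the $k$ boxes i.i.d.\ Bernoulli$(x)$, throw $n$ balls uniformly, and compute the probability that all balls land in type-$1$ boxes by conditioning first on the coloring, then on the ball placement. This buys you a shorter and more transparent proof; the paper's detour through \cite{GS} connects to the general $\Xi$-Fleming--Viot machinery but is not logically needed here since \eqref{generatorA} is already available. One small remark: your citation \texttt{JK} is not in the paper's bibliography; point instead to \cite{BBMST} or \cite{GS} (or Ethier--Kurtz directly) for the abstract duality criterion.
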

This is a special case of Proposition 3.8  in \cite{GS}, but we find the proof of this particular case instructive and for the sake of completeness we include it here.
\begin{proof}
Recall that, from Proposition \ref{ratesblockcounting}, for any function $h: \N \to \R$, $\mathcal{G}$ the infinitesimal generator of the block-counting process $\{N_t\}_{t\ge0}$ is given by
\begin{equation}
\mathcal{G}h(n) := a \binom{n}{2} \left( h(n-1) - h(n) \right)  \ + \ \sum_{k\ge1} F^0(k) \sum_{j=1}^{n-1} \p(W^{k,n} =j) \left(h(j) - h(n)\right).
\label{generatorG}
\end{equation}
We first consider the case $\alpha <1$.
We use Lemma 4.1 in \cite{GS}, which states that generator $\mathcal{A}$ of  $\{X_t\}_{t\ge0}$, applied to a function $f \in C^2[0,1]$ admits the following representation
\begin{align*}
\mathcal{A}f(x) \ &= \ \frac{S(\Delta_{S})}{2} \E\left[ \frac{(-x + \sum_{i=1}^{\infty} Z_i B_i^x)(\sum_{i=1}^{\infty} Z_i B_i^x)}{(\sum_{i=1}^{\infty} Z_i^2 )} f''(x(1-V) + UV\sum_{i=1}^{\infty} Z_i B_i^x) \right]
\end{align*}
where $Z = (Z_1, Z_2, \dots)$ is $S$-distributed, $\{B_i^x\}_{i \in \N}$ is a sequence of i.i.d. Bernoulli random variables with parameter $x$, $U$ is uniform in $[0,1]$, $V$ is $Beta(2,1)$ in $[0,1]$ and $Z, \{B_i^x\}, U$ and $V$ are independent.
Using the definition of $S$, this can be rewritten as
\begin{align*}
\mathcal{A}f(x) \ &= \ \frac{\sum_{k\ge 1} F^0(k)/k}{2} \ \E\left[ \frac{(-x + \frac1K \sum_{i=1}^{K}  B_i^x)(\frac1K\sum_{i=1}^{K} B_i^x)}{(1/K)} f''(x(1-V) + UV\frac1K\sum_{i=1}^{K}B_i^x) \right] 
\end{align*}
where the expectation is taken with respect to $K$, a random variable such that for $ k \ge 1$, $$\ \p(K=k) = \frac{F^0(k)/k}{\sum_{k\ge 1} F^0(k)/k}.$$
Integrating by parts,
\begin{align*}
\mathcal{A}f(x) \ &= \ \frac{\sum_{k\ge 1} F^0(k)/k}{2} \ \E\left[ \frac{(-x + \frac1K \sum_{i=1}^{K}  B_i^x)}{V(1/K)} \left( f'(x(1-V) + V\frac1K\sum_{i=1}^{K}B_i^x) -f'(x(1-V))\right)\right].
\end{align*}
Conditioning on the value of $V$, we have
\begin{align*}
\E\left[ \frac{(-x + \frac1K \sum_{i=1}^{K}  B_i^x)}{V(1/K)} f'(x(1-V))\ \right] = 0.
\end{align*}
Again, following closely the proof of Lemma 4.1 in \cite{GS}, we calculate the expectation with respect to $V$, 
\begin{align*}
\mathcal{A}f(x) \ &= \ \frac{\sum_{k\ge 1} F^0(k)/k}{2} \ \E\left[ \frac{(-x + \frac1K \sum_{i=1}^{K}  B_i^x)}{V(1/K)} f'(x(1-V) + V\frac1K\sum_{i=1}^{K}B_i^x)\right] \\
   &= \ \frac{\sum_{k\ge 1} F^0(k)/k}{2} \ \E\left[ \int_0^1 K\frac{(-x + \frac1K \sum_{i=1}^{K}  B_i^x)}{s}  f'(x(1-s) + \frac{s}{K}\sum_{i=1}^{K}B_i^x) 2sds \right ]  \\  
&=  \ \left({\sum_{k\ge 1} F^0(k)/k}\right) \    \E\left[ K \int_0^1 (-x + \frac1K \sum_{i=1}^{K}  B_i^x)  f'(x(1-s) + \frac{s}{K}\sum_{i=1}^{K}B_i^x) ds \right ]  \\  
&=  \ \left({\sum_{k\ge 1} F^0(k)/k}\right) \ \E\left[K\left( f(\frac1K\sum_{i=1}^{K}B_i^x) - f(x)\right)\right],
\end{align*}
where the last equality comes by integration by parts. 
Now, we consider the function $h$ on $[0,1]\times \N$ such that $h(x,n) = x^n$. We fix $n \in \N$ and we apply the generator $\mathcal{A}$ to $h$, seen as a function of $x$,
\begin{align}
\mathcal{A}h(x) \ &=  \ \left({\sum_{k\ge 1} F^0(k)/k}\right) \ \E\left[K\left( (\frac1K\sum_{i=1}^{K}B_i^x)^n - x^n\right)\right] \nonumber \\
&=  \ \left({\sum_{k\ge 1} F^0(k)/k}\right) \ \E\left[K\left( \frac1{K^n} \sum_{k_1+ \dots + k_K =n} \binom{n}{k_1, \dots, k_K} \prod_{i=1}^K(B_i^x)^{k_i} - x^n\right)\right]\nonumber\\
&=  \ \left({\sum_{k\ge 1} F^0(k)/k}\right) \ \E\left[ K\left( \sum_{k_1+ \dots + k_K =n} \binom{n}{k_1, \dots, k_K}\frac1{K^n}  \left(x^{\sum_{i = 1}^K \mathds{1}_{\{k_i>0\}}} - x^n\right)\right)\right], 
\label{dualitybinom1}
\end{align}
where in the last line we use the fact that  for $k_1, \dots, k_K \in \N$,  $$\E\left( \prod_{i=1}^K (B_i^x)^{k_i}\right) = x^{\sum_{i = 1}^K \mathds{1}_{\{k_i>0}\}}. $$
Consider the random variable $W^{k,n}$ defined in Proposition \ref{ratesblockcounting}. For any $\kappa \in\N$, we have
\begin{align}
\sum_{k_1+ \dots + k_\kappa =n} \binom{n}{k_1, \dots, k_\kappa}\frac1{\kappa^n}  \left(x^{\sum_{i = 1}^\kappa \mathds{1}_{\{k_i>0}\}} - x^n\right) &= \E \left[  x^{W^{\kappa,n}} - x^n\right] \ =: \ E_\kappa.
\label{dualitybinom2}
\end{align}
So we have, 
\begin{align*}
\mathcal{A}h(x) \ &=  \ \left({\sum_{k\ge 1} F^0(k)/k}\right) \ \E\left[ KE_K\right] \\
&=  \ \left({\sum_{k\ge 1} F^0(k)/k}\right) \ \sum_{k\ge1} \frac{F^0(k)/k}{\sum_{k\ge 1} F^0(k)/k} kE_k \\
&=   \sum_{k\ge1} F^0(k) E_k \\
&=   \sum_{k\ge1} F^0(k) \sum_{j = 1}^n \p(W^{k,n} =j) \left(x^{j} - x^n\right)  = \mathcal G h(n),
\end{align*}
where in the last line $h$ is seen as a function of $n$ and $\mathcal G$ is the generator of the block-counting process of the symmetric coalescent defined in \eqref{generatorG}, in the case $a = 0$. 
This completes the proof for the case $\alpha <1$.

If $\alpha = 1$ and $f$ is defined as previously, we have
\begin {align*}
\mathcal{A}f(x) \ = \ \mathcal{A}_1f(x) +\mathcal{A}_2f(x)
\end{align*}
where $\mathcal{A}_2$ is the infinitesimal generator of $\{X_t\}_{t\ge0}$ for the case $\alpha <1$ and $\mathcal{A}_1$ is the generator of the Wright-Fisher diffusion. Let $\{K_t\}_{t\ge0}$ be the block-counting process of the Kingman coalescent and $\{Y_t\}_{t\ge0}$ be the classical Wright-Fisher diffusion. We recall the well known moment duality between these two processes,
\begin{equation}\E(Y_t^n \vert Y_0 = x) \ = \ \E (x^{K_t} \vert K_0 = n).\label{classicalduality}\end{equation}
This implies that
\begin {align*}
\mathcal{A}_1h(x,n) \ &= \ \frac12x(1-x)\frac{\partial^2 h(x,n)}{\partial x} \\
&= \ \binom{n}{2} \left( h(x,n-1) - h(x,n) \right),
\end{align*}
where the last line corresponds to the generator of  $\{K_t\}_{t\ge0}$.
Combining this with the result for the case $\alpha <1$, we have
\begin {align*}
\mathcal{A}h(x,n) \ &= \ \binom{n}{2} \left( x^{n-1} - x^n \right) + \sum_{k\ge1} F^0(k) \sum_{j\le k} \p(W^{k,i} =j) \left(x^{j} - x^n\right) \\
& = \mathcal{G} h(x,n),
\end{align*}
where $\mathcal{G}$ is the infinitesimal generator of the block-counting process of the symmetric coalescent for the case $a = 1$. This completes the proof.
\end{proof}

\section{A topological interlude}
\label{topology}
As explained in the introduction, in the case of the Wright-Fisher model with long drastic bottlenecks and long soft bottlenecks, we also want to prove the convergence of the frequency process to a diffusion with jumps, which is moment dual to the block-counting process of a bottleneck coalescent.
In these cases, the frequency process is not Markovian (the transition rates depend on whether the population is undergoing a bottleneck or not)  and can have important fluctuations during the bottlenecks (see Figure \ref{nonmarkovian} for an illustration), that prevent the convergence in the Skorokhod $J_1$ (and $M_1$) topology. 
However, the points that prevent this  convergence are exactly the accumulation points of the times at which a bottleneck occurred in the discrete models, that have Lebesgue measure 0 when time is rescaled by $N^\alpha$ and $N$ tends to infinity.  

For $T>0$, we denote by $D[0,T]$ the space of real-valued c\`adl\`ag functions defined on $[0,T]$. We will introduce a new metric for convergence in measure on  $D[0,T]$ (see Meyer and Zheng \cite{MZ}). A sequence of c\`adl\`ag functions converges in measure to another c\`adl\`ag function if for any $\epsilon>0$, the Lebesgue measure of the set of points for which the distance to the limit is higher than $\epsilon$ converges to 0. 
We say that a sequence of stochastic processes converges in measure to another process on $D[0,T]$ if it converges weakly, as probability measures on $D[0,T]$, in the topology induced by convergence in measure.
Lemma 1 in \cite{MZ} states that convergence is measure is equivalent to convergence in the pseudopath space i.e. that $x_n \to x$ in measure on $D[0,T]$ if and only if for any continuous bounded function $g:[0,T]\times \R \mapsto \R$, 
 \begin{eqnarray*}
 \lim_{n\rightarrow\infty}\int_0^T g(s,x_n(s))ds&=&\int_0^T g(s,x(s))ds, 
\end{eqnarray*}
and $x_n(T) \to x(T)$. This is also known as convergence in the Meyer-Zheng topology.
 
 Our metric  has the advantage of being a slight modification of the Skorokhod $J_1$ distance \cite{S56} and thus some techniques to manipulate it are well known.
Our method  can be generalized to the study of other processes that can have strong fluctuations (sparks) only in a set of timepoints that has Lebesgue measure 0 in the limit. 
Let us denote by $||\cdot||$ the uniform norm on $D[0,T]$.

 \begin{definition}
 Fix $T>0$. Let $x_1,x_2\in D[0,T]$ and $\mathcal{I}$ the set of finite unions of c\`adl\`ag intervals on $[0,T]$ i.e. 
 $$
 \mathcal{I}=\{I=\cup_{i=1}^n [a_i,b_i): n\in\N, 0\leq a_i< b_i\leq T\}.
 $$
We say that a function is a Skorokhod reparameterisation of time (SRT) if  it is strictly increasing, continuous with a continuous inverse and we define
 $$
 \mathcal{F}=\{f:[0,T]\mapsto [0,T]: f \text{ is a SRT}\}.
 $$
 Let $\lambda$ be the Lebesgue measure on $[0,T]$ and $Id$ the identity map in $[0,T]$. For any $x_1,x_2\in D[0,T]$, define 
 $$
 d_\lambda(x_1,x_2)=\inf_{A\in\mathcal{I}, f\in\mathcal{F}}\{||\mathds{1}_A(x_1-x_2\circ f)||\vee||Id-f||\vee\lambda([0,T]/A)\vee |x_1(T)-x_2(T)|\}.
 $$
 \end{definition}
The topology induced by $d_\lambda$ is separable. Indeed, the set of $\mathbb{Q}$-valued staircase functions with discontinuities in $\mathbb{Q}$ is a countable and dense set.
However the resulting metric space is not complete. As a counterexample, one can think of the sequence $f_n = \sum_{i=1}^n(-1)^i \mathds{1}_{[1/(i+1), 1/i)}$ and its behavior at 0.

 \begin{proposition}
 The mapping $d_\lambda$ is a metric in the  Skorokhod space $D[0,T]$ for every $T>0$ and if $\lim_{n\rightarrow 0} d_\lambda(x_n,x)=0$, then $x_n\rightarrow x$ in measure on the Skorokhod space.
 \end{proposition}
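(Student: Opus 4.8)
The plan is to verify, first, that $d_\lambda$ satisfies the three axioms of a metric, and then to derive the implication that $d_\lambda$-convergence forces convergence in measure (equivalently, in the Meyer--Zheng / pseudopath topology). For the metric axioms, symmetry is the only non-obvious point: the definition of $d_\lambda$ is written asymmetrically in $x_1$ and $x_2$ (we reparametrise $x_2$ by $f$ and look at $x_1 - x_2 \circ f$), so I would show $d_\lambda(x_1,x_2) = d_\lambda(x_2,x_1)$ by the standard Skorokhod device of replacing $f$ by $f^{-1}$: since $f \in \mathcal F$ is a SRT, so is $f^{-1}$, and $\|Id - f\| = \|Id - f^{-1}\|$ up to the usual argument (if $|f(t)-t|\le \delta$ for all $t$ then $|f^{-1}(s)-s|\le\delta$ for all $s$); moreover $\mathds{1}_A(x_1 - x_2\circ f)$ has the same sup-norm as $\mathds{1}_{f^{-1}(A)}(x_1\circ f^{-1} - x_2)$ after the change of variable $t = f(s)$, and $f^{-1}(A)\in\mathcal I$ because $f$ is an increasing homeomorphism so it maps a finite union of càdlàg intervals to a finite union of càdlàg intervals, with $\lambda([0,T]\setminus f^{-1}(A))$ controlled: this last point needs $f$ close to the identity, and indeed if $\|Id-f\|\le\delta$ then $\lambda(f^{-1}(A))\ge\lambda(A)-2\delta n$ where $n$ is the number of intervals — but since $n$ is unbounded this bound is not uniform, so instead I would argue that the infimum is unchanged by noting $\lambda([0,T]\setminus f^{-1}(A)) \le \lambda([0,T]\setminus A) + \|Id - f^{-1}\| \cdot(\text{something})$ is the wrong route; the clean way is: since we are taking an infimum over \emph{both} $A$ and $f$, it suffices to produce, for each competitor $(A,f)$ for $d_\lambda(x_1,x_2)$ achieving value $\le\varepsilon$, a competitor $(A',f')$ for $d_\lambda(x_2,x_1)$ of value $\le\varepsilon$, and $(f^{-1}(A), f^{-1})$ does the job once we check $\lambda([0,T]\setminus f^{-1}(A))\le\varepsilon$, which holds because on the complement of $A$ we have length $\le\varepsilon$ and $f^{-1}$ distorts lengths by a factor controlled via $\|Id-f\|\le\varepsilon$; I will make this precise by writing $[0,T]\setminus A$ as a finite union of intervals and bounding the image lengths. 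Non-negativity is immediate, and $d_\lambda(x_1,x_2)=0 \Rightarrow x_1 = x_2$ (Lebesgue-a.e., hence everywhere by right-continuity together with the $|x_1(T)-x_2(T)|$ term): if the infimum is $0$ there are $A_n, f_n$ with all four quantities tending to $0$, so $\lambda([0,T]\setminus A_n)\to 0$, $f_n\to Id$ uniformly, and $\sup_{A_n}|x_1 - x_2\circ f_n|\to 0$; a diagonal/subsequence argument then shows $x_1(t) = x_2(t)$ for all $t$ outside a null set, and right-continuity of càdlàg functions plus $x_1(T)=x_2(T)$ upgrades this to equality everywhere.

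For the triangle inequality, given competitors $(A,f)$ for $d_\lambda(x_1,x_2)$ of value $\le\varepsilon_1$ and $(B,g)$ for $d_\lambda(x_2,x_3)$ of value $\le\varepsilon_2$, I would take the composed reparametrisation $g\circ f$ (still a SRT) and the set $A\cap f^{-1}(B)\in\mathcal I$, and estimate each of the four terms: $\|Id - g\circ f\|\le \|Id - f\| + \|f - g\circ f\| \le \|Id-f\| + \|Id - g\| \le \varepsilon_1+\varepsilon_2$; on $A\cap f^{-1}(B)$ we have $|x_1 - x_3\circ(g\circ f)| \le |x_1 - x_2\circ f| + |x_2\circ f - x_3\circ g\circ f| \le \varepsilon_1 + \varepsilon_2$ (the second piece because $s\in f^{-1}(B)$ means $f(s)\in B$, where $|x_2 - x_3\circ g|\le\varepsilon_2$); the measure term $\lambda([0,T]\setminus(A\cap f^{-1}(B))) \le \lambda([0,T]\setminus A) + \lambda([0,T]\setminus f^{-1}(B)) \le \varepsilon_1 + \lambda(f^{-1}([0,T]\setminus B))$, and the last quantity is $\le \varepsilon_2 + 2\varepsilon_1$ or so using $\|Id-f\|\le\varepsilon_1$ to control how $f^{-1}$ distorts the length of $[0,T]\setminus B$; and the endpoint term is $\le\varepsilon_1+\varepsilon_2$. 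Collecting, $d_\lambda(x_1,x_3)\le C(\varepsilon_1+\varepsilon_2)$ for an absolute constant $C$, and since $\varepsilon_1,\varepsilon_2$ were arbitrarily close to the two distances, $d_\lambda(x_1,x_3)\le C(d_\lambda(x_1,x_2)+d_\lambda(x_2,x_3))$. (If we want the sharp constant $1$, one refines the bookkeeping on the measure term, but for the statement as written — that $d_\lambda$ is a metric — absorbing constants is acceptable provided we then rescale; I would instead take care with the measure estimate to get constant $1$, using that $[0,T]\setminus B$ decomposes into intervals whose images under the increasing map $f^{-1}$ are intervals of total length $\le \lambda([0,T]\setminus B) + 2\|Id-f\|$ — no, again the factor $2$ times number of intervals creeps in — so the honest statement is that the triangle inequality holds with constant $1$ \emph{after} noting that one may always enlarge $A$ to a union of at most the same number of intervals while only decreasing the distance; I will present the clean version in the proof.)

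Finally, for the implication to convergence in measure: suppose $d_\lambda(x_n,x)\to 0$. Fix $\varepsilon>0$ and a continuous bounded $g:[0,T]\times\R\to\R$; I must show $\int_0^T g(s,x_n(s))\,ds \to \int_0^T g(s,x(s))\,ds$ and $x_n(T)\to x(T)$, by Lemma 1 of Meyer--Zheng this is exactly convergence in measure. The endpoint convergence is built into $d_\lambda$. For the integral: pick $A_n\in\mathcal I$, $f_n\in\mathcal F$ with $\|\mathds{1}_{A_n}(x_n - x\circ f_n)\|$, $\|Id - f_n\|$, $\lambda([0,T]\setminus A_n)$ all $\to 0$. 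Split $\int_0^T g(s,x_n(s))\,ds = \int_{A_n} + \int_{[0,T]\setminus A_n}$; the second integral is bounded by $\|g\|_\infty\,\lambda([0,T]\setminus A_n)\to 0$, and similarly $\int_{[0,T]\setminus A_n} g(s,x(s))\,ds \to 0$. On $A_n$, write $g(s,x_n(s)) = g(s, x(f_n(s)) + \delta_n(s))$ with $|\delta_n(s)|\le\|\mathds{1}_{A_n}(x_n-x\circ f_n)\|\to 0$; since $g$ is bounded and continuous, $\int_{A_n} g(s,x_n(s))\,ds - \int_{A_n} g(s,x(f_n(s)))\,ds \to 0$ by dominated convergence once we observe the integrand difference tends to $0$ uniformly on $A_n$ (continuity of $g$ is uniform on compacts, and $x$ is bounded on $[0,T]$). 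It remains to compare $\int_0^T g(s,x(f_n(s)))\,ds$ with $\int_0^T g(s,x(s))\,ds$: here the change of variables $u = f_n(s)$ gives $\int_0^T g(f_n^{-1}(u), x(u))\,(f_n^{-1})'(u)\,du$, but to avoid differentiability assumptions on $f_n$ I would instead argue directly that $f_n\to Id$ uniformly implies $x\circ f_n \to x$ in measure (because $x$ is càdlàg hence has at most countably many discontinuities, so at every continuity point $u$ of $x$, $x(f_n(s))\to x(u)$ when $s\to u$, and one assembles this with $\|Id-f_n\|\to 0$ via, e.g., the fact that $s\mapsto x(f_n(s))$ and $s\mapsto x(s)$ differ on a set whose measure $\to 0$ — made rigorous by approximating $x$ uniformly off a small-measure set by a step function, for which the statement is elementary). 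Combining the pieces gives $\int_0^T g(s,x_n(s))\,ds\to\int_0^T g(s,x(s))\,ds$, completing the proof.

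The main obstacle is the bookkeeping around the set-valued coordinate $A\in\mathcal I$: because $\mathcal I$ allows arbitrarily many intervals, naive estimates of $\lambda([0,T]\setminus f^{-1}(A))$ pick up a factor proportional to the number of intervals, so one must argue more carefully — either by only ever enlarging $A$ to sets whose complement is a single interval (or a controlled number), or by using that the reparametrisations are close to the identity to transfer the measure bound without reference to the interval count. This is the one place where the ``slight modification of $J_1$'' has genuinely new content, and the rest is a careful but standard adaptation of classical Skorokhod-space arguments together with the Meyer--Zheng characterisation of convergence in measure.
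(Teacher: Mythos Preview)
Your overall architecture matches the paper's: verify the metric axioms, then deduce Meyer--Zheng convergence via the pseudopath characterisation. For positive definiteness you argue by a subsequence/a.e.\ route, while the paper argues directly at a single point of disagreement using right-continuity; both work. For the implication to convergence in measure your splitting into $A_n$ and its complement, followed by dominated convergence and the observation that $x\circ f_n\to x$ in measure when $f_n\to Id$ uniformly, is essentially the paper's argument with the final change-of-variables step replaced by a direct measure-convergence argument --- a legitimate alternative.

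The genuine gap is the triangle inequality. You work with $A\cap f_2^{-1}(B)$ and correctly discover that $\lambda(f_2^{-1}(B^c))$ cannot be controlled by $\lambda(B^c)$ uniformly in the interval count of $B$; your various attempts (enlarging $A$, bounding image-lengths interval by interval, settling for a quasi-metric constant $C>1$) all fail for exactly the reason you identify, and you ultimately promise a ``clean version'' that never appears. The paper sidesteps this obstacle by choosing the combined set to be $A\cup B$ rather than an intersection: then $\lambda((A\cup B)^c)=\lambda(A^c\cap B^c)\le\lambda(A^c)+\lambda(B^c)$ holds trivially with no reparametrisation of the set, so the interval-count issue never arises; with this choice the four terms are bounded separately and the constant is $1$. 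You should rework the argument with $A\cup B$; note that the sup-norm bookkeeping then goes through a different route (one must bound $\|\mathds{1}_{A\cup B}(x_1-x_3\circ f_2)\|$ and $\|\mathds{1}_{A\cup B}(x_3\circ f_2-x_2\circ f_3\circ f_2)\|$ directly rather than on a subset), and that step deserves careful verification.
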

 \begin{proof}
 It is clear that, if $x_1=x_2$, then $d_\lambda(x_1,x_2)=0$. 
 Now let us prove the converse. 
  Assume that $x_1\neq x_2$. If $|x_1(T)-x_2(T)|>0$ then $d_\lambda(x_1,x_2)>0$. 
 Now, if $|x_1(T)-x_2(T)|=0$, then there exists a point $\tau\in [0,T)$ such that $|x_1(\tau)-x_2(\tau)|>0$. 
 Let us call $y=|x_1-x_2|$ and observe that $y\in D[0,T] $. Using the right continuity of  $y$ at the point $\tau$, we know that there exist $\epsilon,\delta>0$ such that $\forall t\in[\tau,\tau+\delta), \  y(t)>\epsilon$. Let $A\in\mathcal{I}$. If $[\tau,\tau+\delta)\subset A^c$ then $\lambda(A^c)>\delta$. Otherwise, $||\mathds{1}_A(x_1-x_2)||>\epsilon$. Now, take $f\in\mathcal{F}$ such that $||Id-f||<\delta/2$, then there exists $\bar\tau\in [\tau,\tau+\delta) $ such that for all $t\in[\bar\tau,\bar\tau+\delta/2)$, $\bar y(t)=|x_1(t)-x_2(f(t))|>\epsilon.$ Repeating the argument,  we conclude that $d_\lambda(x_1,x_2)>\min\{\delta/2,\epsilon\}$, and thus $x_1=x_2$ if and only if $d_\lambda(x_1,x_2)=0$. 
 
Symmetry follows by the same arguments used by Skorokhod in the case of $J_1$ \cite{S56}. In fact $f \in \mathcal{F}$ implies that $f(0)=0$ and $f(T)=T$ and that $f^{-1}\in \mathcal{F}$.Using this observation, it is easy to see that $d_\lambda(x_1,x_2)=d_\lambda(x_2,x_1)$.

To show that the triangle inequality holds, let $x_1, x_2, x_3\in D[0,T]$ and observe that for any $A,B\in\mathcal{I}$ and $f_2,f_3\in\mathcal{F}$ it holds that $A\cup B\in\mathcal{I}$ and $f_1:=f_3\circ f_2\in \mathcal{F}.$ The triangle inequality follows from four observations (most of them due to Skorokhod \cite{S56}).

First,  
 \begin{eqnarray*}
||\mathds{1}_{A\cup B}(x_1-x_2\circ f_1)||&\leq &||\mathds{1}_{A\cup B}(x_1-x_3\circ f_2)||+||\mathds{1}_{A\cup B}(x_3\circ f_2-x_2\circ f_3\circ f_2)||\\
&\leq& ||\mathds{1}_{A}(x_1-x_3\circ f_2)||+||\mathds{1}_{ B}(x_3-x_2\circ f_3)||.
 \end{eqnarray*}
Second,  \begin{eqnarray*}
||Id-f_1||&\leq& ||Id-f_2||+||Id-f_3||.
 \end{eqnarray*}
  Third, 
  \begin{eqnarray*}
\lambda((A\cup B)^c)&\leq&\lambda(A^c)+\lambda( B^c).
 \end{eqnarray*}
 And finally, 
  \begin{eqnarray*}
|x_1(T)-x_2(T)|\leq |x_1(T)-x_3(T)|+|x_3(T)-x_2(T)|.
 \end{eqnarray*}
 Putting all these observations together, we conclude that the triangle inequality holds, and thus $d_\lambda$ is a metric.
 
 Finally, we need to prove that convergence according to $d_\lambda$ implies convergence in the Meyer-Zheng topology. 
Let $x,x_1,x_2,...\in D[0,T] $ and let $g:[0,T]\times \R \mapsto \R$ be a continuous bounded function. Assume that $\lim_{n\rightarrow \infty}d_{\lambda}(x_n,x)=0.$ For any $n \in \N$, there exists $\epsilon_n \to 0$, $A_n\in \mathcal{I}$ and $f_n\in\mathcal{F}$ such that
 $$
||\mathds{1}_{A_n}(x_n-x\circ f_n)||\vee||Id-f_n||\vee\lambda(A_n^c)\vee |x(T)-x_n(T)|<d_{\lambda}(x_n,x)+\epsilon_n,
 $$
 which implies that
  $$
\lim_{n\to\infty}||\mathds{1}_{A_n}(x_n-x\circ f_n)||\vee||Id-f_n||\vee\lambda(A_n^c)\vee |x(T)-x_n(T)|=0.
 $$
Since $g$ is continuous, bounded and $||Id - f_n|| \to 0$, we have 
\begin{eqnarray*}
 \lim_{n\to\infty} \int_0^T g(s,x_n(s))ds&=& \lim_{n\rightarrow\infty}\int_{A_n} g(s,x_n(s))ds+  \lim_{n\to\infty} \int_{A_n^c} g(s,x_n(s))ds.
\end{eqnarray*}
First, as  $d_\lambda(x_n, x) \to 0$ and $\epsilon_n \to 0$ , we have $\lambda(A_n^c) \to 0$ and $g$ is bounded, so
\begin{eqnarray*}
 \lim_{n\to\infty}  \int_{A_n^c} g(s,x_n(s))ds  = 0.
\end{eqnarray*}
Second, 
\begin{eqnarray*}
\int_{A_n} g(s,x_n(s))ds = \int_{A_n} g(f_n(s),x(f_n(s)))ds + \int_{A_n} \left (g(s,x_n(s))- g(f_n(s),x(f_n(s)))\right)ds,
\end{eqnarray*}
where 
\begin{eqnarray*}
|  \int_{A_n} \left (g(s,x_n(s))- g(f_n(s),x(f_n(s)))\right)ds| \le \sup_{s \in A_n}\{|g(s,x_n(s))- g(f_n(s),x(f_n(s))|\} T, 
\end{eqnarray*}
and as $g$ is bounded, continuous, $A_n$ is relatively compact and $||Id - f_n|| \to 0$, 
\begin{eqnarray*}
\lim_{n\to \infty} \sup_{s \in A_n}\{|g(s,x_n(s))- g(f_n(s),x(f_n(s))|\} = 0.
\end{eqnarray*}
Finally, using the fact that $f_n^{-1} \in \mathcal{F}$, and $\lim_{n\to\infty}||Id-f_n|| =0$, which implies that  $\lim_{n\to\infty}||Id-f^{-1}_n|| =0$, 
\begin{eqnarray*}
\lim_{n\to \infty} \int_{A_n} g(f_n(s),x(f_n(s)))ds  &=& \lim_{n\to \infty} \int_{f_n^{-1}(A_n)} g(t, x(t)) (f_n^{-1})'(t) dt \\
&=& \int_0^T g(t, x(t))dt.
\end{eqnarray*}
Combining these facts, we conclude that $x_n\rightarrow x$ in the pseudopath space, which completes the proof. \end{proof}

 \begin{remark}
 Convergence in $J_1$ implies convergence according to $d_\lambda$. To see this, take $A=[0,T]$.
  
 Convergence in the sense of $d_\lambda$ is close to convergence in measure. However, it is easy to see that it is not equivalent. The reason is that the difference of the functions in the point $\{T\}$ is crucial in order to have convergence according to $d_\lambda$ and $\{T\}$ is clearly a set of Lebesgue measure zero. It seems feasible to modify $d_\lambda$ in order to have the equivalence, but we decide to stay with $d_\lambda$ as it is because it is a minimalistic modification of the Skorokhod $J_1$ distance. Simply removing the term $|x_1(T)-x_2(T)|$ would cause that the paths that differ only on the last point would be at distance zero and then $d_\lambda$ would only be a pseudometric.
 \end{remark}

\section{Coalescents with drastic bottlenecks}
\label{drastic}
\subsection{The drastic bottleneck coalescent}
\label{sect212}
Now we consider bottlenecks that are drastic but can last for several generations. As we shall see, when the bottlenecks last for more than one generation the genealogy is not described by a symmetric coalescent anymore. 

\begin{definition}[Wright-Fisher model with long drastic bottlenecks]
Fix  $\alpha \in(0,1]$, $\eta >0$, $N \in \N$ and $F^0$ and $\mathrm{L}$ two  probability measures in $\N$.   
Let  $\{F_i\}_{i \in \N}$ be a sequence of i.i.d. random variables of law $ F^0$. 
Let $\{l_{i,N}\}_{i\in \N}$ and $\{s_{i,N}\}_{i\in \N}$ be two sequences of independent positive random variables such that  for all $ i \ge 1,  \ l_{i,N} $ converges in distribution to  $\mathrm{L}$ 
 and $s_{i,N}$ follows a geometric distribution of parameter $\eta/N^\alpha$.
In the Wright-Fisher model with long drastic bottlenecks  the sequence of population sizes $\{R^N_g\}_{g \in \Z_+}$ is given by
 $$
R^N_g \ = \ \left\{ 
\begin{array}{ll}
\min(F_m, N)  \textrm{ if } \sum_{i=1}^{m-1}(s_{i, N} + l_{i,N}) + s_{m,N} < g  \le  \sum_{i=1}^{m}(s_{i, N} + l_{i,N}) \\
N \textrm{ otherwise}
\end{array} 
\right.
$$
\label{longdrastic}
\end{definition}

\begin{remark}
As $\mathrm{L}$ does not depend on $N$, $\mathrm{L}/N^\alpha \to 0$ in distribution, which ensures that even if the bottlenecks last for several generations, their duration is negligible when time is rescaled by $N^\alpha$. Also, in that time scale, when $ N \to \infty$,  the distribution of the time between two bottlenecks converges to an exponential distribution of parameter $\eta$.
\end{remark}

In the limit when $N\to \infty$, the genealogy of this model can be described by the drastic bottleneck coalescent that we now define. As for the symmetric coalescent, the idea is that, in the genealogy there is a `Kingman part' corresponding  to what happens outside the bottlenecks (where the population size goes to infinity) and a `simultaneous multiple collisions' part corresponding to what happens during the bottlenecks.

To define this type of event, 
we start by fixing $k,g\in\N$ and considering the (partition-valued) ancestral process of a classical Wright-Fisher model with constant population size $k$, running for $g$ generations. 
The blocks obtained are given labels in $[k]$.
The block labelled $i$ contains all the descendants of individual $(i,0)$.
We then define the following random variables:
\begin{itemize}
\item $K^{k,g,b}$ is the number of ancestors of a sample of size $b\le k$.
\item $(A^{k,g}_1,  \dots, A^{k,g}_k)$ are the family sizes: $A^{k,g}_i$ is the size of the block labelled $i$ and $A^{k,g}_i = 0$ if there is no block labelled $i$. In other words, $A^{k,g}_i$ is the number of descendants of individual $i$ after $g$ generations. We denote by $\mathrm{A}^{k,g}$ the distribution, in $E^k=\{(i_1,\dots.i_k),\sum i_j=k\}$, of $(A^{k,g}_1,  \dots, A^{k,g}_k)$.
\item Let $V^{k,n}_i$ denote the number of balls allocated to box $i$ in the paintbox construction of the symmetric coalescent. We define a biased version of $\mathrm{A}^{k,g}$ as follows: For  $n \in \N \cup \infty$ and $i\in \{1, \dots, k\}$, $$\bar A^{k, g, n}_i \ = \ \sum_{j = 1}^k V^{k,n}_j \mathds{1}_{\{j \textrm{ belongs to the block labelled } i\}}. $$
\end{itemize}

\begin{definition}[The drastic bottleneck coalescent]
Fix $F^0$ and $\mathrm{ L}$ two probability measures in $\N$ and $\eta>0$.
The drastic bottleneck coalescent is defined by the following transition rates. 
For each $b\ge 2$ and  $k_1, \ldots, k_r$  such that  $\sum_{i=1}^r k_i  = b$,  each $[b,(k_1,\dots, k_r)]$-collision happens at rate
\begin{align*}
\lambda_{b, (k_1, \dots, k_r)} \ =&\ a \ \mathds{1}_{\{r = b-1, k_1 = 2\}} \nonumber \\
 + \ &  \mathcal{N}(n, (k_1,\dots, k_r))^{-1} \eta \sum \limits_{k\ge r} F^0(k) \sum \limits_{g\ge1} \mathrm{L}(g)  \sum \limits_{i_1\ne \dots \ne i_r} \p(\bar A^{k, g-1, b}_{i_1} = k_1, \dots,\bar A^{k, g-1, b}_{i_r} = k_r).
\end{align*}
\end{definition}
In words, there are $b$ lineages and a bottleneck of size $k$ and duration $g$ occurs. First each one of the $b$ lineages chooses a parent amongst the $k$ individuals of the last generation of the bottleneck. There remain $W^{k,b}$ lineages (each one containing $V^{k,b}_1, \dots, V^{k,b}_k$ original lineages). Then the bottleneck still lasts for $g-1$ more generations, in which the lineages merge as in a Wright-Fisher model with population size $k$ (see Figure \ref{figlineagesbot} for an illustration). 
\begin{figure}
\begin{center}
\includegraphics[width=15cm]{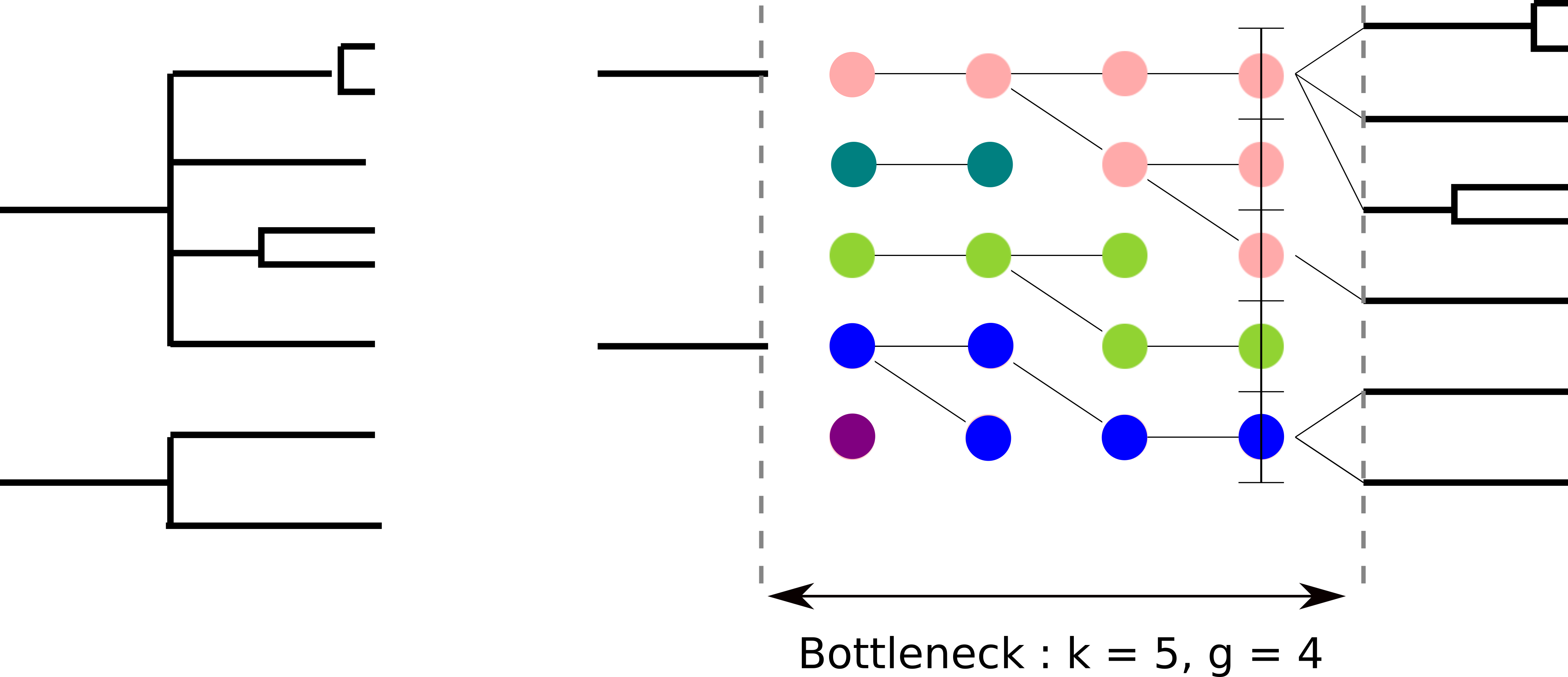}
\caption{A $[6, (4,2)]$-collision ilustrated. Before the bottleneck, only `Kingman type' mergers occur. Each of the 6 remaining lineages is allocated to one of the 5 individuals of the last generation of the bottleneck, and we have $V^ {5,6} = (3,0,1,0,2)$. Then, the system evolves for 3 more generations as a Wright-Fisher model. The family sizes are $A^{5,4}_1 = 3, \ A^{5,4}_2 = 0, \ A^{5,4}_3 = 1, \ A^{5,4}_4 = 1$ and $A^{5,4}_5 = 0$. The biased family sizes are $\bar A^{5,4,6}_1 = 4, \ \bar A^{5,4,6}_4 = 2$ and $\bar A^{5,4,6}_i = 0$ otherwise. }
\label{figlineagesbot}
\end{center}
\end{figure}
As a consequence, we have the following result.
\begin{proposition}
The block-counting process of the drastic bottleneck coalescent has the following transition rates
\begin{equation*}
q_{ij} \ = \ a \binom{i}{2} \mathds{1}_{\{j = i-1\}}  +  \eta \sum_{k\ge r} F^0(k) \sum_{g\ge1} \mathrm{L}(g)\p(K^{k, g-1, W^{k,i}} = j).
\end{equation*}
\label{proprate}
\end{proposition}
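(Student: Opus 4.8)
The plan is to derive the transition rates of the block-counting process directly from the definition of the drastic bottleneck coalescent by summing the collision rates $\lambda_{b,(k_1,\dots,k_r)}$ over all ways of realizing a transition from $i$ blocks to $j$ blocks, following the same bookkeeping used to pass from Proposition on collision rates to Proposition \ref{ratesblockcounting} in the symmetric case. First I would recall that, by definition of the block-counting process, $q_{ij}$ for $j<i$ is obtained by summing $\lambda_{i,(k_1,\dots,k_r)}$ over all unordered tuples $(k_1,\dots,k_r)$ with $\sum_{l=1}^r k_l = i$ that produce exactly $j$ remaining blocks, each such tuple counted with multiplicity $\mathcal{N}(i,(k_1,\dots,k_r))$, the number of simultaneous choices of a $k_1$-tuple, a $k_2$-tuple, \dots, and a $k_r$-tuple from a set of $i$ elements. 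The key point is that the factor $\mathcal{N}(i,(k_1,\dots,k_r))^{-1}$ in the definition of $\lambda_{i,(k_1,\dots,k_r)}$ is exactly designed to cancel this multiplicity, leaving the clean combinatorial sum $\sum_{i_1\ne\dots\ne i_r}\p(\bar A^{k,g-1,i}_{i_1}=k_1,\dots,\bar A^{k,g-1,i}_{i_r}=k_r)$ over ordered boxes.

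Next I would handle the two terms separately. The Kingman part $a\binom{i}{2}\mathds{1}_{\{j=i-1\}}$ is immediate: the only collision type with $a>0$ is $r=i-1$, $k_1=2$ (a single binary merger), which reduces the number of blocks by one, and summing $a\,\mathds{1}_{\{r=i-1,k_1=2\}}$ with its multiplicity $\mathcal{N}(i,(2,1,\dots,1))=\binom{i}{2}$ yields $a\binom{i}{2}\mathds{1}_{\{j=i-1\}}$, exactly as in the symmetric coalescent case. For the bottleneck part, after the cancellation described above, I need to sum, over all unordered $(k_1,\dots,k_r)$ with $\sum k_l=i$ and over all ordered assignments of distinct boxes $i_1\ne\dots\ne i_r$, the probability $\p(\bar A^{k,g-1,i}_{i_1}=k_1,\dots,\bar A^{k,g-1,i}_{i_r}=k_r)$, restricted to the event that the number of nonempty biased families equals $j$. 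But $\bar A^{k,g-1,i}$ partitions the $i$ sampled lineages into $k$ (possibly empty) families indexed by the boxes, and the number of nonempty families is by construction $K^{k,g-1,W^{k,i}}$: after the $i$ lineages are allocated to the $k$ boxes one obtains $W^{k,i}$ occupied boxes carrying the split sizes $V^{k,i}_j$, and over the remaining $g-1$ generations these $W^{k,i}$ lineages coalesce as a Wright-Fisher model of size $k$, leaving $K^{k,g-1,W^{k,i}}$ blocks. Hence summing the joint probability over all ordered tuples with a prescribed number $j$ of nonzero entries simply collapses to $\p(K^{k,g-1,W^{k,i}}=j)$, and reinstating the outer sums over $k$ and $g$ with weights $\eta F^0(k)\mathrm{L}(g)$ gives the stated formula.

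The main obstacle — really the only delicate point — is verifying rigorously that the combinatorial identity
\[
\sum_{\substack{(k_1,\dots,k_r)\\ \sum k_l=i}}\ \sum_{i_1\ne\dots\ne i_r}\p\bigl(\bar A^{k,g-1,i}_{i_1}=k_1,\dots,\bar A^{k,g-1,i}_{i_r}=k_r,\ \#\{l:k_l>0\}=j\bigr)\ =\ \p(K^{k,g-1,i\text{ via }W^{k,i}}=j),
\]
i.e.\ that summing the ordered-box joint law over all compositions with exactly $j$ nonzero parts exactly reconstitutes the marginal law of the number of nonempty biased families, which is by definition $K^{k,g-1,W^{k,i}}$. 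This is a bijective/partition-counting argument: each unordered multiset $(k_1,\dots,k_r)$ together with a choice of distinct occupied boxes and the underlying Wright-Fisher genealogy corresponds to exactly one realization of the biased allocation, and conversely. One must be careful that the sum over $r$ (the number of final blocks in a single collision, i.e.\ the number of occupied biased families) is already subsumed once we fix $j$, since $j=r$ on the relevant event. I would also note the case $j=i$, which contributes through the Kingman term alone when $a>0$ and otherwise must be excluded from $q_{ij}$; and the degenerate summation range "$k\ge r$" should be read as "$k\ge 1$" with the convention that terms with fewer than $j$ occupied boxes vanish automatically. Once these bookkeeping points are settled the proof is a direct computation with no analytic content.
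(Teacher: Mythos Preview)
Your approach is essentially the paper's, only written out in more detail. The paper gives no formal proof: the proposition is stated as an immediate consequence of the verbal description in the paragraph preceding it (at a $(k,g)$-bottleneck the $i$ lineages first choose parents among the $k$ individuals of the last bottleneck generation, leaving $W^{k,i}$ occupied boxes, and then these coalesce for $g-1$ further generations in a Wright--Fisher model of size $k$, leaving $K^{k,g-1,W^{k,i}}$ ancestors). Your derivation via summing the collision rates and cancelling the $\mathcal N^{-1}$ factor makes this explicit, and the key identification you give --- the number of nonempty biased families among $\bar A^{k,g-1,i}_1,\dots,\bar A^{k,g-1,i}_k$ is exactly $K^{k,g-1,W^{k,i}}$ --- is precisely the content of that verbal description.

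One caution on the combinatorial identity you flag as the delicate point: when several $k_l$ coincide, the interaction between the \emph{unordered} collision type $(k_1,\dots,k_r)$, the \emph{ordered} sum $\sum_{i_1\ne\dots\ne i_r}$, and the factor $\mathcal N^{-1}$ does not collapse as cleanly as you suggest; a symmetry factor $\prod_v \mu_v!$ (with $\mu_v$ the multiplicity of part-size $v$) appears and must be tracked. The quickest way around this is to bypass the $\lambda$-formula entirely and observe directly that the total rate of passing from $i$ to $j$ blocks at a $(k,g)$-bottleneck is, by construction, $\p(\#\{l:\bar A^{k,g-1,i}_l>0\}=j)=\p(K^{k,g-1,W^{k,i}}=j)$ --- which is exactly the paper's (informal) route.
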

As in the previous section, we can define $\mathcal{\bar G}$, the infinitesimal generator of the block-counting process $\{\bar N_t\}_{t\ge0}$. For any function $h: \N \to \R$, we have
\begin{equation}
 \mathcal{\bar G}h(n) := a \binom{n}{2} \left( h(n-1) - h(n) \right)  \ + \ \eta \sum_{k\ge 1} F^0(k) \sum_{g\ge1} \mathrm{L}(g)\sum_{j =1}^{n-1}\p(K^{k,g-1,W^{k,n}} = j) \left(h(j) - h(n)\right).
\label{generatorG2}
\end{equation}
\begin{remark}
As in the previous section, we can describe the drastic bottleneck coalescent using a paintbox construction. When the bottleneck lasts for $g$ generations, it would correspond to iterating $g$ times the paintbox construction of the symmetric coalescent.
\end{remark}
 \subsection{Duality with the Wright-Fisher model with long drastic bottlenecks}
Now, we consider the Wright-Fisher model with long drastic bottlenecks from Definition \ref{longdrastic}, with two types of individuals. 
  We denote by $\{\bar X^N_g\}_ {g\in\N}$ the frequency process associated with that model. 
In the following, `$\overset{d_\lambda}{\Longrightarrow}$' denotes weak convergence in the topology induced by $d_\lambda$. 

  \begin{theorem}
Fix  $\alpha \in(0,1]$ and $\eta>0$. Let $F^0$ and $\mathrm{L}$ be two probability measures in $\N$.
Consider the sequence of processes  $\{\bar X^N\}_{N \in \N}$, such that $\bar X^N:=\{\bar X^{N}_g\}_ {g\in\Z_+}$ is the frequency process associated with the Wright-Fisher model with long drastic bottlenecks parametrized by $\alpha, \ \eta,  \ N$, $F^0$ and $\mathrm{L}$ (see Definition \ref{longdrastic}). Then, for any $T>0$, in $D[0,T]$,
$$
\{\bar X^N_{\lfloor N^{\alpha} t\rfloor}\}_{0\le t \le T} \ \overset{d_\lambda}{\underset{N\to\infty}{\Longrightarrow} } \ \{\bar X_t\}_{0\le t \le T},
$$
where $\{\bar X_t\}_{0\le t \le T}$ is the unique strong solution of the SDE
\begin{equation}
d\bar X_t \ = \ \mathds{1}_{\{\alpha = 1\}} \sqrt{\bar X_t(1-\bar X_t)}dB_t \ + \ \int_{U_0} \sum_{i=1}^k \frac{a_i}{k} \left( \mathds{1}_{\{u_i \le \bar X_{t^-} \}}  - \bar X_{t^-}\right) \bar N(dt, dk, dg, da, du),
\label{SDE2}
\end{equation}
where  $\{B_t\}_{t\geq0}$ is a standard Brownian motion and $\bar N$ is a compensated Poisson measure   on  $(0, \infty) \times U_0$ with $U_0  =  \N \times \N \times E^k\times [0,1]^{\N}$. $\bar N$ has intensity $\eta ds \otimes F^0(dk) \otimes \mathrm{L}(dg) \otimes \mathrm{A}^{k,g-1}(da) \otimes du $, where $du$ is the Lebesgue measure on $[0,1]^{\N}$ and $\mathrm{A}^{k,g-1}$ is the distribution in $E^k$ of the family sizes in a classical Wright-Fisher model with population size $k$, after $g-1$ generations (as defined in Section \ref{sect212}), i.e. $\mathrm{A}^{k,g-1}$ is the distribution of a vector whose $i$-{th} coordinate corresponds to the number of descendants of  individual $i$ after $g-1$ generations  in a classical Wright-Fisher model with population size $k$.
\label{SDEs2}
\end{theorem}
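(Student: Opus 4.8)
The plan is to follow the strategy of Theorem~\ref{SDEs} while circumventing the non-Markovianity of $\{\bar X^N_g\}_g$: I will introduce an auxiliary \emph{Markov} chain $\{\tilde X^N_g\}_g$ in which every bottleneck is executed in a single generation, prove that its time rescaling converges to $\bar X$ in the usual Skorokhod $J_1$ topology, and then show that $\bar X^N$ and $\tilde X^N$ become indistinguishable for $d_\lambda$ because they differ only during the bottlenecks, whose total rescaled duration vanishes. First, exactly as in Lemma~\ref{lemmaexistence}, a unique strong solution of \eqref{SDE2} exists: the jump operator has finite total rate $\eta$ and bounded jumps, so \eqref{SDE2} is of the type covered by Theorem~5.1 of \cite{Li} (see also Lemma~3.6 of \cite{GS}), applied with zero drift and with the jump kernel $x\mapsto\sum_{i\le k}\frac{a_i}{k}(\mathds{1}_{\{u_i\le x\}}-x)$, where $(k,g,a,u)$ is distributed according to the intensity of $\bar N$. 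Its generator, computed as in the proof of Theorem~\ref{SDEs}, is, for $f\in C^2[0,1]$,
$$\bar{\mathcal{A}}f(x)=\mathds{1}_{\{\alpha=1\}}\tfrac12 x(1-x)f''(x)+\eta\sum_{k\ge1}F^0(k)\sum_{g\ge1}\mathrm{L}(g)\,\E\Big[f\Big(\sum_{i=1}^k\frac{A^{k,g-1}_i}{k}\mathds{1}_{\{u_i\le x\}}\Big)-f(x)\Big],$$
with the $u_i$ i.i.d.\ uniform on $[0,1]$ and $(A^{k,g-1}_1,\dots,A^{k,g-1}_k)\sim\mathrm{A}^{k,g-1}$, independent.

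Second, define $\{\tilde X^N_g\}_g$ by: given $\tilde X^N_g=x$, with probability $1-\eta/N^\alpha$ let $\tilde X^N_{g+1}$ be a $\mathrm{Bin}(N,x)/N$ variable, and with probability $\eta/N^\alpha$ draw $k=\min(F,N)$ with $F\sim F^0$, $g'\sim\mathrm{L}$, $(a_i)_{i\le k}\sim\mathrm{A}^{k,g'-1}$ and i.i.d.\ uniforms $(u_i)_{i\le k}$, and set $\tilde X^N_{g+1}=\sum_{i\le k}\frac{a_i}{k}\mathds{1}_{\{u_i\le x\}}$, i.e.\ perform the post-bottleneck resampling in a single step. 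This chain is Markov, and as in the proof of Theorem~\ref{SDEs} its discrete generator $N^\alpha(\tilde{\mathcal{U}}^N-I)$ (for $\{\tilde X^N_{\lfloor N^\alpha t\rfloor}\}_t$) converges uniformly on $C^2[0,1]$ to $\bar{\mathcal{A}}$: the binomial term converges to $\mathds{1}_{\{\alpha=1\}}\frac12 x(1-x)f''(x)$ and is $O(N^{\alpha-1})\to0$ when $\alpha<1$, while the bottleneck term converges by dominated convergence, using $\min(F,N)\to F$, the fact that $g'\sim\mathrm{L}$ does not depend on $N$, and the finiteness of the total rate $\eta$. Combined with the well-posedness of the martingale problem for $\bar{\mathcal{A}}$ from the first step, Theorems~19.25 and 19.28 of \cite{Kallemberg} give $\{\tilde X^N_{\lfloor N^\alpha t\rfloor}\}_t\Longrightarrow\bar X$ in $D[0,T]$ for $J_1$, hence also for $d_\lambda$, since convergence in $J_1$ implies convergence according to $d_\lambda$.

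Third, couple $\bar X^N$ and $\tilde X^N$ on one probability space using the same inter-bottleneck gaps $\{s_{i,N}\}$, sizes $\{F_i\}$, durations $\{l_{i,N}\}$, the same parent-choice uniforms and internal Wright--Fisher dynamics inside each bottleneck, and the same binomial resamplings outside bottlenecks; then the two chains visit the same sequence of values, $\bar X^N$ merely spending $l_{m,N}$ generations (with wild intermediate excursions) where $\tilde X^N$ spends one. Let $M_N$ be the number of bottlenecks started before generation $\lfloor N^\alpha T\rfloor$; since the geometric gaps make each generation a bottleneck start with probability $\eta/N^\alpha$ independently, $M_N$ is distributed as a $\mathrm{Bin}(\lfloor N^\alpha T\rfloor,\eta/N^\alpha)$, hence tight (of order Poisson$(\eta T)$). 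Let $A_N\in\mathcal{I}$ be $[0,T]$ with the rescaled bottleneck intervals of $\bar X^N$ deleted, so $\lambda(A_N^c)\le N^{-\alpha}\sum_{m\le M_N}l_{m,N}\to0$ in probability (finitely many bottlenecks, each of rescaled length $l_{m,N}/N^\alpha\to0$). The clock matching $\tilde X^N$-time to $\bar X^N$-time differs from the identity by at most $N^{-\alpha}\sum_{m\le M_N}(l_{m,N}-1)\le\lambda(A_N^c)$; by the surgery standard for $J_1$ this yields, after continuous interpolation, an $f_N\in\mathcal{F}$ with $\|Id-f_N\|\le\lambda(A_N^c)+o(1)\to0$ in probability and $\mathds{1}_{A_N}(\bar X^N-\tilde X^N\circ f_N)\equiv0$. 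Finally $|\bar X^N(T)-\tilde X^N(T)|$ is $0$ unless $\lfloor N^\alpha T\rfloor$ falls inside a bottleneck of $\bar X^N$, which forces a bottleneck to start within a window of $O_{\mathbb{P}}(1)$ generations before $N^\alpha T$, an event of probability $O_{\mathbb{P}}(1)/N^\alpha\to0$. Hence every term in the definition of $d_\lambda$ tends to $0$, that is $d_\lambda\big(\{\bar X^N_{\lfloor N^\alpha t\rfloor}\}_t,\{\tilde X^N_{\lfloor N^\alpha t\rfloor}\}_t\big)\to0$ in probability.

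To conclude, the topology induced by $d_\lambda$ is separable, so the standard converging-together lemma applies: $\tilde X^N\Rightarrow\bar X$ for $d_\lambda$ together with $d_\lambda(\bar X^N,\tilde X^N)\to0$ in probability gives $\{\bar X^N_{\lfloor N^\alpha t\rfloor}\}_t\overset{d_\lambda}{\Longrightarrow}\bar X$. The main obstacle is the third step: constructing the reparameterisation $f_N$ and, above all, controlling the terminal term $|\bar X^N(T)-\tilde X^N(T)|$, which---as the Remark following the definition of $d_\lambda$ emphasises---is exactly the ingredient that keeps $d_\lambda$ from being mere convergence in measure and therefore must be shown to vanish; the rest is renewal-type bookkeeping of the asymptotically negligible time spent in bottlenecks, together with a generator computation essentially identical to that of Theorem~\ref{SDEs}.
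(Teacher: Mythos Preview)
Your proposal is correct and follows essentially the same approach as the paper: collapse each bottleneck into a single step to obtain an auxiliary Markov chain (your $\tilde X^N$ is the paper's $V^N$), prove its $J_1$-convergence to $\bar X$ by generator convergence via Theorems 19.25 and 19.28 of \cite{Kallemberg}, and show $d_\lambda$-closeness of $\bar X^N$ and the auxiliary using that the rescaled total bottleneck time vanishes. The only differences are organisational: the paper inserts a second intermediary $Z^N$ (namely $\bar X^N$ frozen during bottlenecks) so as to treat the time-change and set-excision ingredients of $d_\lambda$ in two separate triangle-inequality steps, whereas you handle both at once via an explicit coupling and are, if anything, more explicit than the paper about the terminal term $|x_1(T)-x_2(T)|$ and the final converging-together argument.
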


\begin{remark}
This result implies that $\{\bar X^N_{\lfloor N^{\alpha} t\rfloor}\}_{0\le t \le T}$ converges to $ \{\bar X_t\}_{0\le t \le T}$ in measure on the Skorokhod space. 
As already mentioned in the introduction (and illustrated in Figure \ref{nonmarkovian}), it is impossible to have convergence in the $J_1$ or $M_1$ Skorokhod topology. 
\end{remark}

\begin{figure}
\begin{center}
\subfigure[]{\includegraphics[width=8cm]{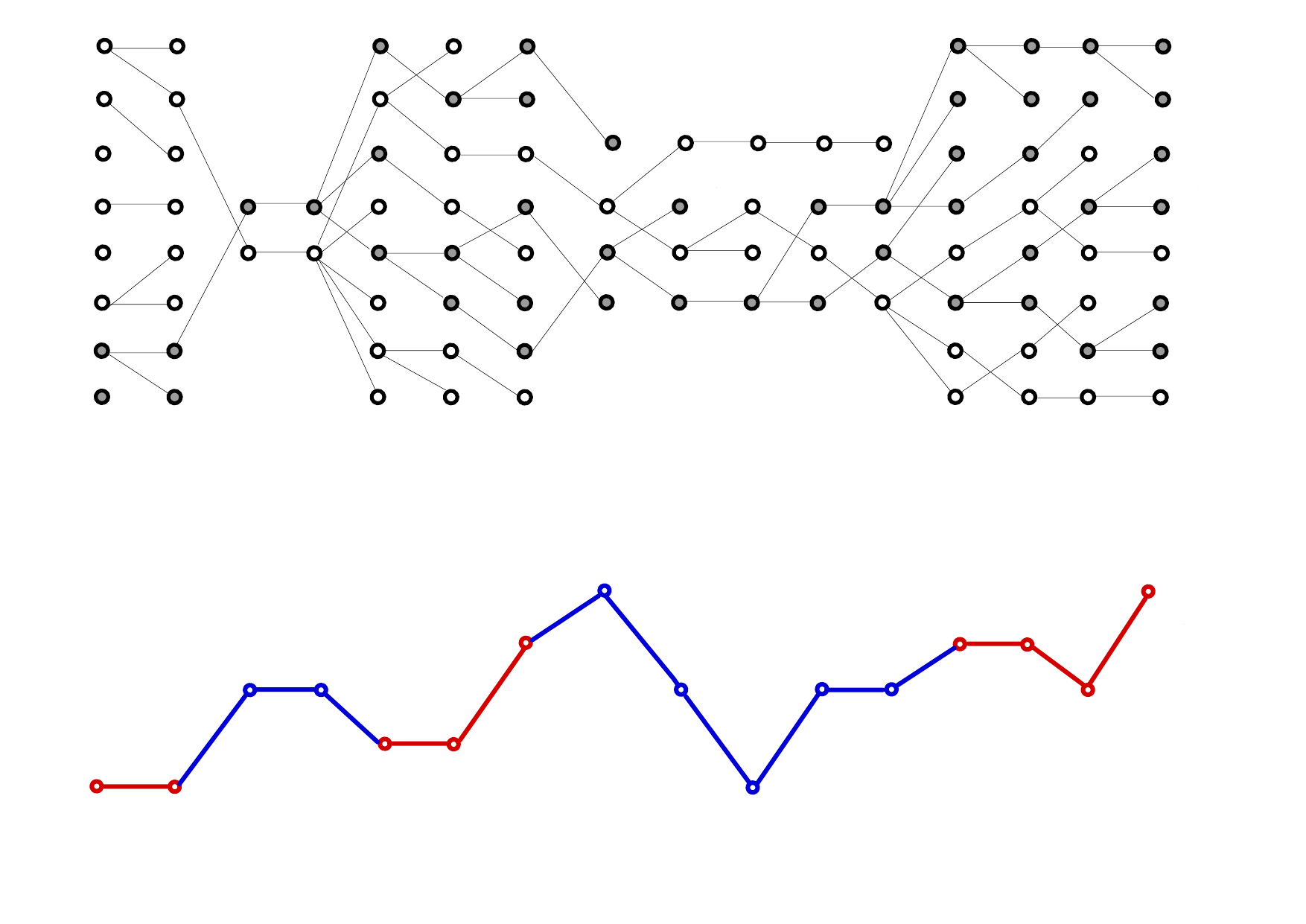}} \quad
\subfigure[]{\includegraphics[width=4.5cm]{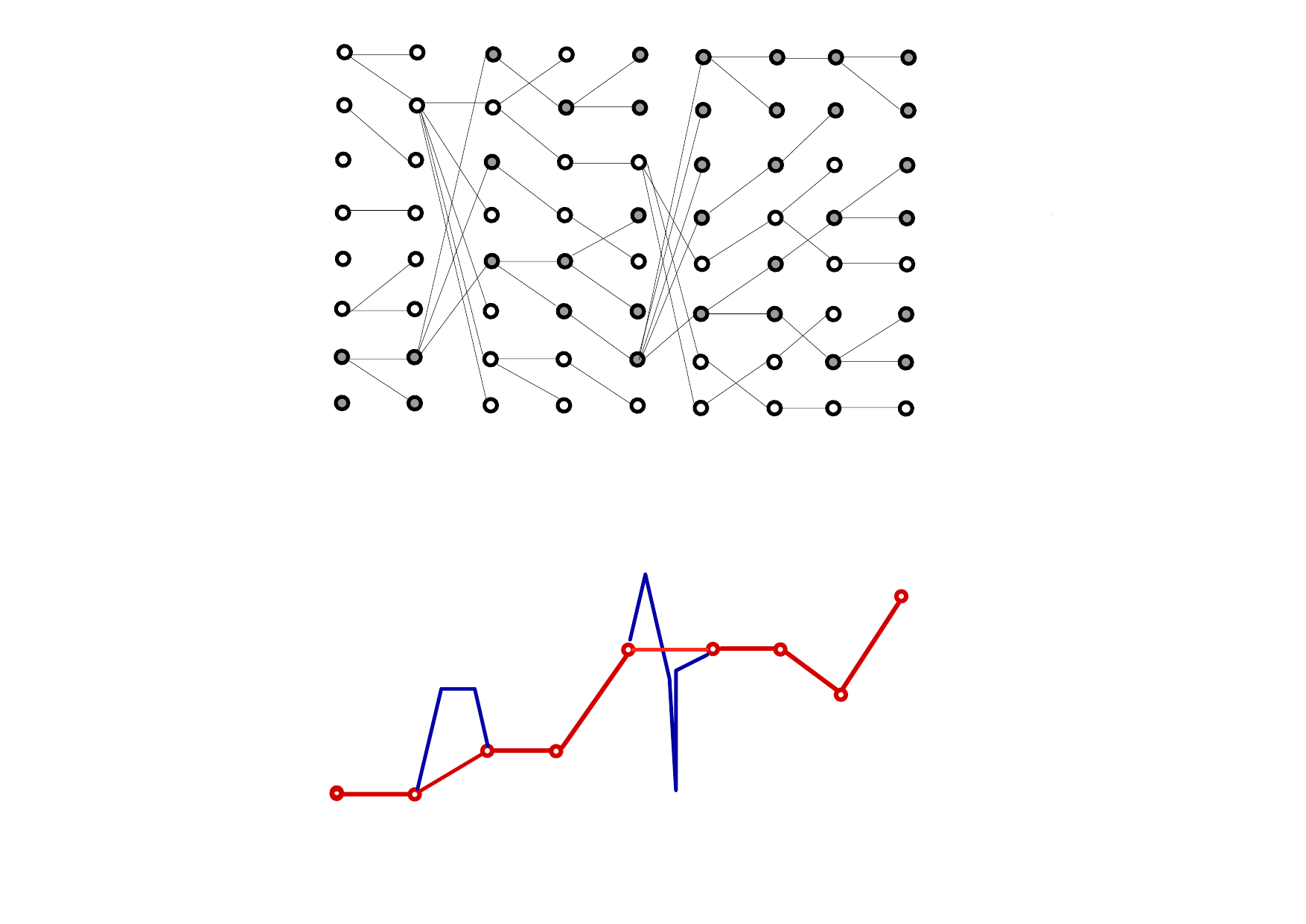}} \\
\subfigure[]{\includegraphics[width=8cm]{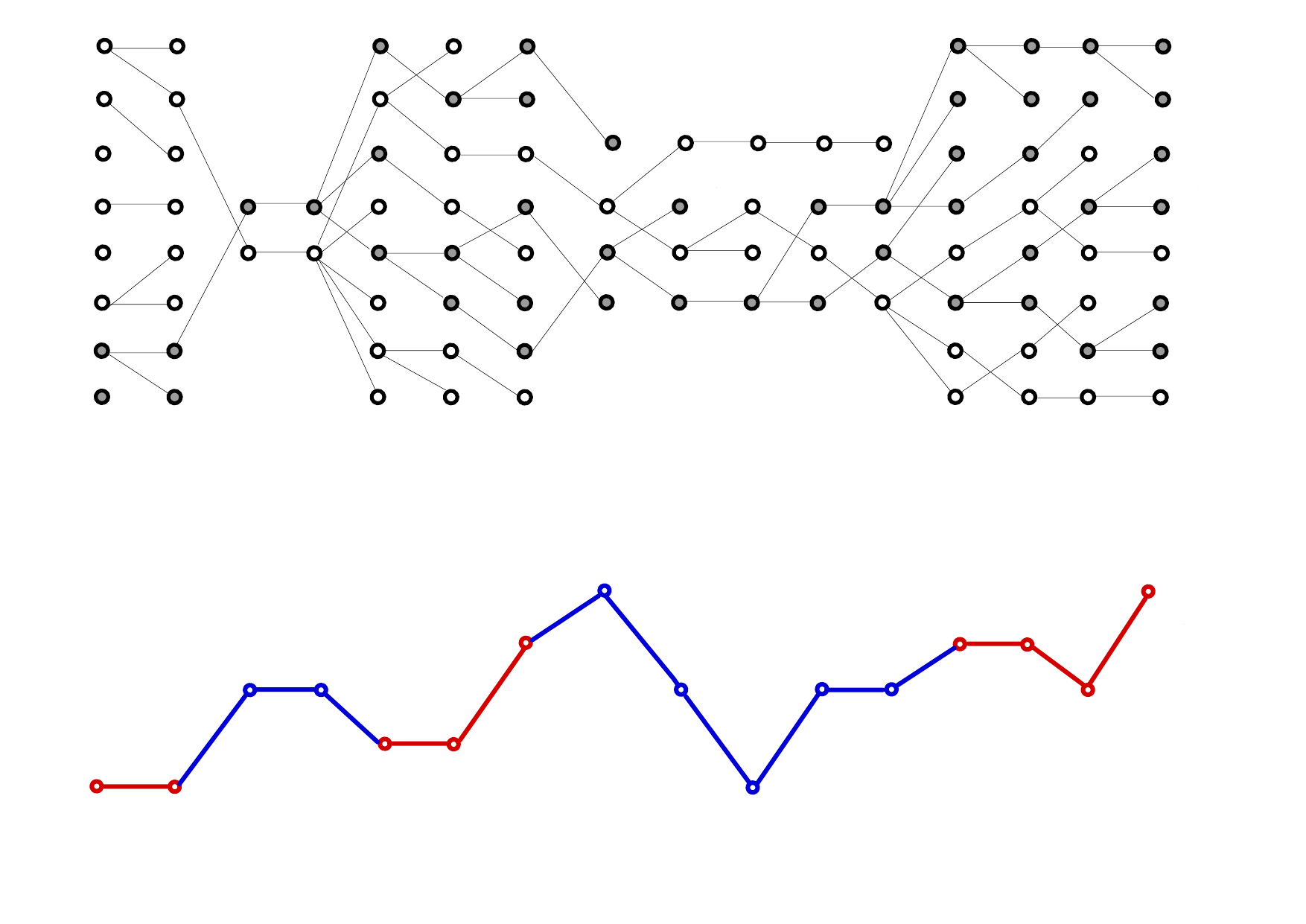}} \quad
\subfigure[]{\includegraphics[width=4.5cm]{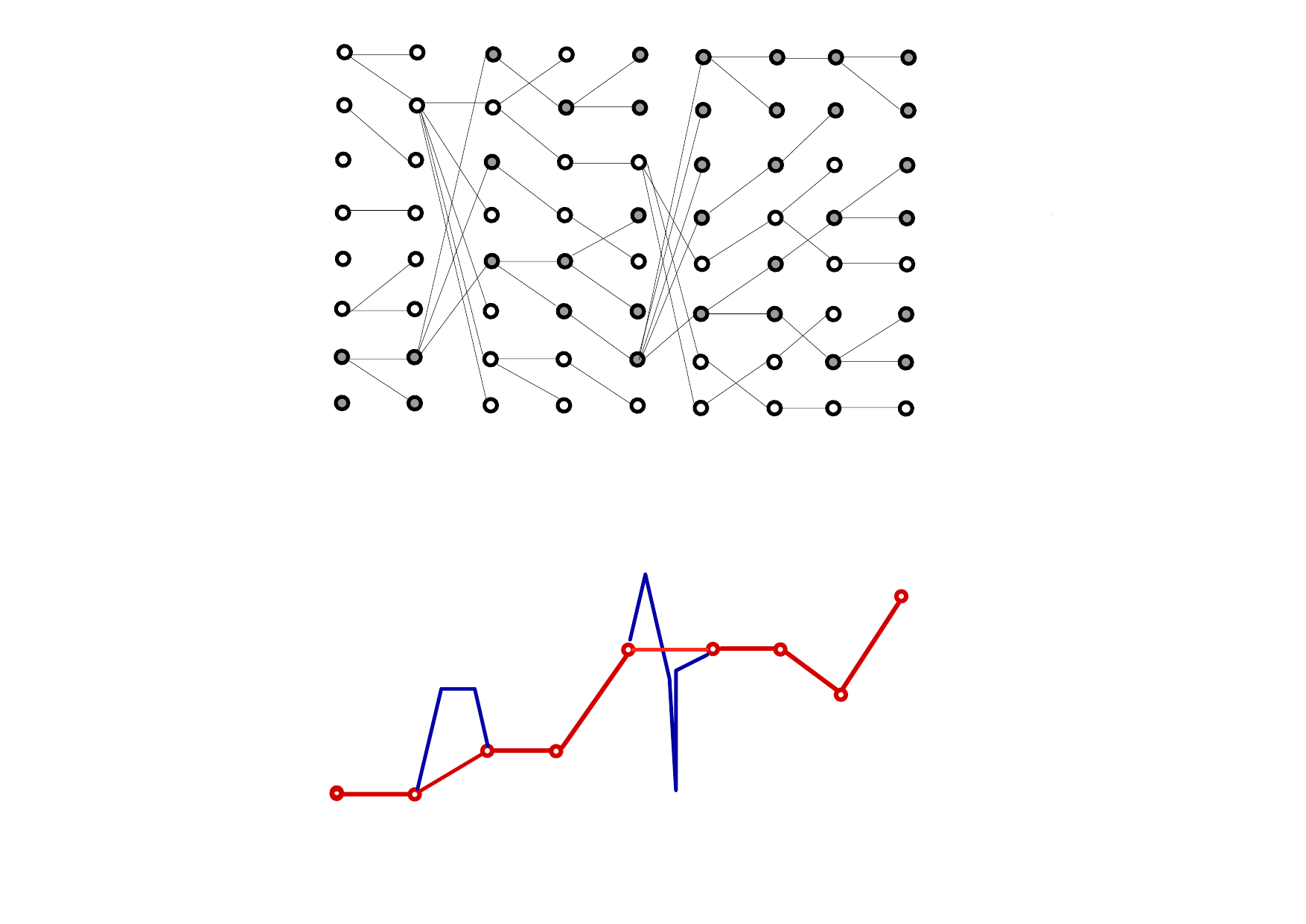}}
\caption{
(a) represents a realization of the Wright-Fisher model with long drastic bottlenecks in which the \textit{usual} population size is $N=8$. We observe that $F_1=2$, $s_{1,8}=2$ and $l_{1,8}=2$, meaning that in generation $2$ a bottleneck that reduces the population to $2$ individuals, starts and last for $2$ generations. Similarly, $F_2=4$, $s_{2,8}=3$, $l_{2,8}=5$ and $s_{3,8}\ge4$. In (c), the frequency process (associated with (a)) is colored according to the population size: red outside the bottlenecks and blue during the bottlenecks.
In (b) we observe the result of collapsing the bottlenecks in the Wright-Fisher model with long drastic bottlenecks represented in (a). We observe that a lot of the fluctuations of the frequency process are lost. In (d), the red line is the frequency process and the blue excursions, that were present in (c), are lost due to the collapsing of the bottlenecks. These blue excursions (sparks) make it impossible to have convergence in $J_1$ or $M_1$ to the diffusion with jumps  that is the solution of \eqref{SDE2}.
 }
\label{nonmarkovian}
\end{center}
\end{figure}

To understand why the jump term of \eqref{SDE2} takes this form, we need to think about what happens during a bottleneck of size $k$ and length $g$. 
When the bottleneck begins, only $k$ individuals of the infinite population survive. The term `$\mathds{1}_{\{u_i \le \bar X_{t^-}\} }$' comes from the fact that individual $i$ is of type 1 with probability $\bar X_{t^-}$ and of type $0$ with probability $1-\bar X_{t^-}$. This generation corresponds to time $0$ for the bottleneck.
Then the bottleneck lasts for another $g-1$ generations and individual $i$ has $a_i$ descendants which are all of the same type as her. See Figure \ref{figlongbottleneck} for an illustration of this jump term.

\begin{figure}
\begin{center}
\includegraphics[width=12cm]{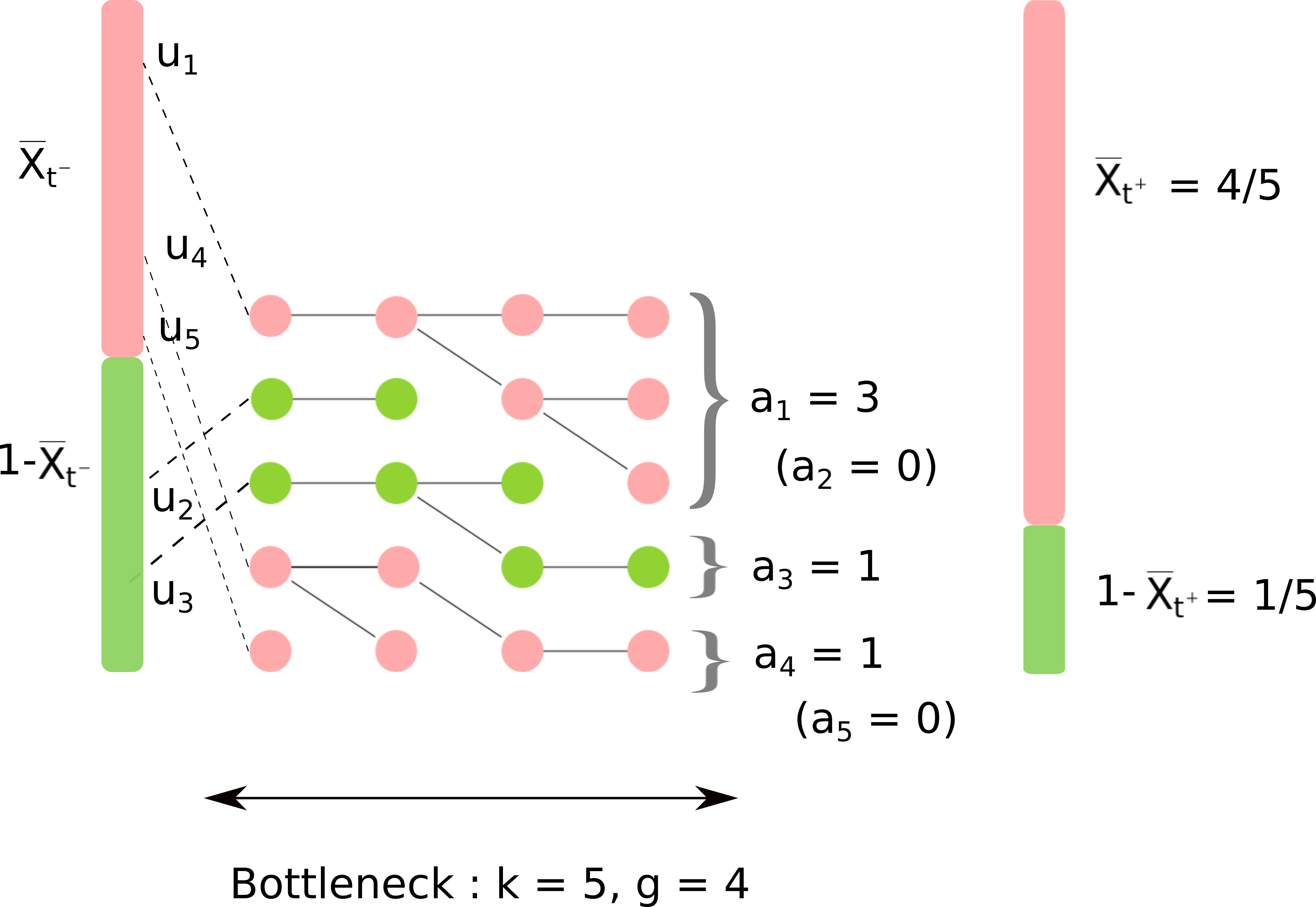}
\caption{The jump part of \eqref{SDE2} illustrated. Type 1 individuals are represented in pink and type 0 individuals in green.}
\label{figlongbottleneck}
\end{center}
\end{figure}

Again, before proving Theorem \ref{SDEs2} we shall make sure that a solution to Equation \eqref{SDE2} exists.  
\begin{lemma}\label{lemmados}
For any probability measures $F^0$ and $\mathrm{L}$ in $\N$ and any $\alpha \in (0,1], \ \eta>0$, there exists a unique strong solution to the SDE \eqref{SDE2}.
\label{lemmaexistence2}
\end{lemma}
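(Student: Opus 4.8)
The plan is to reduce the statement to an existing general existence-and-uniqueness result for SDEs driven by Poisson noise, exactly as was done for Lemma \ref{lemmaexistence} via Lemma 3.6 in \cite{GS} (itself an application of Theorem 5.1 in \cite{Li}). The only new feature of \eqref{SDE2} compared with \eqref{SDE1} is that the jump kernel now carries two extra coordinates, $g$ (the bottleneck length) and $a=(a_1,\dots,a_k)\in E^k$ (the family sizes), with the compensator $\eta\, ds\otimes F^0(dk)\otimes \mathrm{L}(dg)\otimes \mathrm{A}^{k,g-1}(da)\otimes du$. So the first step is to rewrite \eqref{SDE2} in the standard form to which the cited theorem applies, namely
\begin{equation*}
d\bar X_t \ = \ \mathds{1}_{\{\alpha=1\}}\sqrt{\bar X_t(1-\bar X_t)}\,dB_t \ + \ \int_{U} H(\bar X_{t^-},\theta)\,\bar N(dt,d\theta),
\end{equation*}
where $\theta=(k,g,a,u)$ ranges over $U=U_0$, $\bar N$ is a compensated Poisson random measure with the intensity above, and $H(x,\theta)=\frac1k\sum_{i=1}^k a_i(\mathds{1}_{\{u_i\le x\}}-x)$. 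The diffusion coefficient $x\mapsto\sqrt{x(1-x)}$ is the usual Wright–Fisher coefficient, which is $\tfrac12$-Hölder and hence satisfies the Yamada–Watanabe-type conditions used in \cite{Li,GS}; no new work is needed there.

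The substantive step is to check the hypotheses of Theorem 5.1 in \cite{Li} for the jump coefficient $H$, i.e. an integrability/Lipschitz-type bound of the form $\int_U |H(x,\theta)|^2\,\mu(d\theta)<\infty$ and $\int_U |H(x,\theta)-H(y,\theta)|^2\,\mu(d\theta)\le C|x-y|$ (or $C|x-y|^2$, depending on which formulation is used), uniformly for $x,y\in[0,1]$, where $\mu$ is the $\theta$-marginal of the intensity. For fixed $k$ one has $|H(x,\theta)|\le \frac1k\sum_{i=1}^k a_i=1$ since $(a_1,\dots,a_k)\in E^k$, so $|H|\le 1$ everywhere; and the same Bernoulli-indicator computation used in \cite{GS} for the symmetric-coalescent SDE gives, after integrating $u$ over $[0,1]^{\N}$, a bound of the form $\int |H(x,\theta)-H(y,\theta)|^2\,du \le C\,|x-y|$ with a constant independent of $k$, $g$, $a$. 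Since $F^0$ and $\mathrm{L}$ are probability measures and, for each $k,g$, $\mathrm{A}^{k,g-1}$ is a probability measure on $E^k$, summing/integrating these bounds against $\eta\,F^0(dk)\,\mathrm{L}(dg)\,\mathrm{A}^{k,g-1}(da)$ is immediate and finite. One should also note that boundedness of $H$ means the large-jump part needs no separate truncation argument, and that $[0,1]$ is invariant for the flow (each jump moves $\bar X$ toward a convex combination of $\bar X$ and the empirical frequency, which stays in $[0,1]$), so the solution does not leave the state space. Pathwise uniqueness then follows from the Lipschitz estimate and the Yamada–Watanabe argument, and together with existence of a weak solution (constructed e.g. as the scaling limit in Theorem \ref{SDEs2}, or directly from \cite{Li}) this yields existence of a unique strong solution.

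The main obstacle, such as it is, is bookkeeping rather than anything deep: one must make sure the extra randomness $(g,a)$ is genuinely harmless, i.e. that replacing the single ``$\frac1k\sum_{i=1}^k$'' average of \eqref{SDE1} by the weighted average $\frac1k\sum_{i=1}^k a_i(\cdot)$ does not spoil the Lipschitz-in-$x$ bound — which it does not, precisely because $\sum_i a_i=k$ makes the weights $a_i/k$ sum to $1$, so $H$ is still a convex-combination-type coefficient with the same structure as before. Hence the cited theorems apply verbatim to the measure-theoretic data $(F^0,\mathrm{L},\eta,\alpha)$, and the lemma follows. \qed
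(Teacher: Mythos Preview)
Your proposal is correct and follows essentially the same route as the paper: apply Theorem~5.1 of \cite{Li} directly to \eqref{SDE2}, checking the drift, diffusion and jump conditions (the paper labels these (3.a), (3.b), (5.a)), with the key observation being precisely the one you isolate, namely that $(a_1,\dots,a_k)\in E^k$ forces $\sum_i a_i/k=1$ so that the weighted average $H$ inherits the same boundedness and Lipschitz-in-$x$ structure as in the symmetric case. The paper carries out the Bernoulli computation for condition (3.b) explicitly (writing $B_i^x-B_i^y$ as a Bernoulli$(x-y)$ variable and bounding $\sum_i a_i^2/k^2\le 1$), but this is exactly the computation you defer to \cite{GS}.
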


\begin{remark}
To prove this result, one could rewrite the stochastic differential equation \eqref{SDE2} in terms of a measure $\Xi$ on $\Delta$ that would depend on $F^0$, $\mathrm{L}$ and $\mathrm{A}^{k,g-1}$ (in fact for $k\in\N$ and $(a_1, \dots, a_k)$ drawn from the distribution $\mathrm{A}^{k,g-1}$,  the re-ordering of $(a_1/k, \dots, a_k/k)$ is an element of $\Delta^*$). Then  Lemma \ref{lemmados} would follow from Lemma 3.6 in \cite{GS} (and Theorem \ref{samplingduality} would follow form Proposition 3.8 in the same reference).
However, we have decided not to modify \eqref{SDE2} and we give a proof that is more instructive, as it is connected to the parameters of the Wright-Fisher model with long drastic bottlenecks and sheds light on the connection between this Wright-Fisher model and the drastic bottleneck coalescent.
\end{remark}

\begin{proof}
We use Theorem 5.1 in \cite{Li}. In particular, we need to verify conditions (3.a), (3.b) and (5.a)  of that paper. Condition (3.a) is trivial, as in our case the drift coefficient is equal to 0. To prove condition (3.b), we have to prove that there exists a constant $K$ such that for every $x,y \in [0,1]$,
\begin{align}
& \int_{\N} \int_{\N} F^0(k) \mathrm{L}(g) \E\left  [  \left ( \sum_{i=1}^k \frac{a_i}{k} \left(  \mathds{1}_{\{u_i \le x\}} - x  -   \mathds{1}_{\{u_i \le y\}} + y \right)\right)^2 \right ]  \nonumber \\ 
&+  \mid \sqrt{x(1-x)} - \sqrt{y(1-y)} \mid   \ \le \ K \mid x- y\mid.
\label{3bPuLi}
\end{align}
First, we use the fact that 
\begin{align*}
\mid \sqrt{x(1-x)} - \sqrt{y(1-y)} \mid  \le 4 \mid x - y \mid,
\end{align*}
see for example claim (26) in \cite{GS}. 
Second,  without lost of generality we assume that $x>y$ and we have
\begin{align*}
\int_{\N} \int_{\N}F^0(k)\mathrm{L}(g) \E\left  [  \left ( \sum_{i=1}^k \frac{a_i}{k} \left(  \mathds{1}_{\{u_i \le x\}} - x  -   \mathds{1}_{\{u_i \le y\}} + y \right)\right)^2 \right ]   \\
=  \int_{\N}\int_{\N} F^0(k)\mathrm{L}(g)  \E\left [  \sum_{i=1}^k \frac{a_i^2}{k^2} \left((B_i^x - B_i^y) - (x-y)  \right)^2 \right ], 
\end{align*}
where the $B_i^x$'s and the $B_i^y$'s are (dependent) Bernoulli random variables of parameter $x$ and $y$ respectively. Using the fact that $(B_i^x - B_i^y)$ is Bernoulli of parameter $x-y$ we have: 
\begin{align*}
& \int_{\N}\int_{\N} F^0(k)\mathrm{L}(g)  \E\left [  \sum_{i=1}^k \frac{a_i^2}{k^2} \left((B_i^x - B_i^y) - (x-y)  \right)^2 \right ]  \\
&\le   ((x-y)(1-(x-y))  \  \int_{\N}\int_{\N} F^0(k) \mathrm{L}(g) \E\left(\sum_{i=1}^k \frac{a_i^2}{k^2}\right)   \\
&\le  (x-y), 
\end{align*}
where we used the facts that $\frac{a_i}{k}\le 1$, $\sum_{i=1}^k {a_i} = k$ and $F^0$ and $\mathrm{L}$ are probability measures.  This proves claim \eqref{3bPuLi}. 
Finally, condition (5.a) in \cite{Li} is verified because, using similar arguments as before,
\begin{align}
& \int_{\N} \int_{\N} F^0(k) \mathrm{L}(g) \E\left  [  \left ( \sum_{i=1}^k \frac{a_i}{k} \left(  \mathds{1}_{\{u_i \le x\}} - x \right)\right)^2 \right ]  + x(1-x)   \ \le \ 2.
\label{5aPuLi}
\end{align}
This implies that we can apply Theorem 5.1 in  \cite{Li} and conclude the existence and uniqueness of strong solution of \eqref{SDE2}.
 \end{proof}
 \begin{remark}
 The fact that $F^0$ and $\mathrm{L}$ are probability measures guarantees the existence of a unique strong solution to \eqref{SDE2}, but a sufficient condition is that $$\int_{\N}\int_{\N} F^0(k) \mathrm{L}(g) \E\left(\sum_{i=1}^k \frac{a_i^2}{k^2}\right) \ < \ \infty,$$
 where $(a_1, \dots, a_k)$ is distributed as $\mathrm{A}^{k,g-1}$.
 \end{remark}
 
We are now ready to prove Theorem \ref{SDEs2}. 
The strategy of the proof is the following. We want to prove the convergence of the sequence of processes $\{\bar X^N\}_{N\in\mathbb N}$, which are not Markovian, to a diffusion with jumps. 
In Step 1 we will construct an auxiliary process $V^N$ that corresponds to collapsing each bottleneck into one single time step into $\bar X^N$ and that is Markovian. We will prove that $d_\lambda(\{\bar X^N_{\lfloor N^\alpha t\rfloor}\}_{0\le t \le T}, \{V^N_{\lfloor N^\alpha t\rfloor}\}_{0\le t \le T})\rightarrow 0$ in probability.
In Step 2, we prove, using standard techniques for Markov processes, that $\{V^N_{\lfloor N^\alpha t\rfloor}\}_{0\le t \le T}$ converges weakly to $\{\bar X_t\}_{0\le t \le T}$, as processes in $D[0,T]$ endowed with Skorokhod's $J_1$ topology. 
Combining the two steps, the conclusion is straightforward. 

This method can be generalized to any case in which convergence in the Skorokhod topology is prevented by strong fluctuations that happen in a set of times that has Lebesgue measure 0 in the limit.

\begin{proof}[Proof of Theorem \ref{SDEs2}] 
\textbf{Step 1.}
We say that a generation is in a bottleneck and write $g\in B^N$ if for some $m$, $\underline{t}_m:=\sum_{i=1}^{m-1}(s_{i, N} + l_{i,N}) + s_{m,N} < g  \le  \bar{t}_m:=\sum_{i=1}^{m}(s_{i, N} + l_{i,N})$. Let $g_i$ be the $i$-th generation that is not in a bottleneck. Define 
$$
V^N_i=\bar X^N_{g_i}.
$$  
Note that while $\bar X^N$ is not a Markov process (as one needs to know if there is a bottleneck or not to calculate the transition probabilities), $V^N$ is a Markov process, and it is constructed from $\bar X^N$ simply by collapsing the entire bottleneck into one step. Now, consider the random $R^N_g$-measurable projection $\pi^N:\R_+\mapsto \N$ defined as $\pi^N(t)=\max\{g\in \N: g<t, g\in (B^N)^c\}$ and we define the process $\{Z^N_t\}_{t\geq0}$ by
$$
Z^N_t=\bar X^N_{\pi^N(t)}.
$$
The difference between $V^N_{\lfloor t\rfloor}$ and $Z^N_t$ is that $V^N_{\lfloor t\rfloor}$ can move every unit of time, while $Z^N_t$ stays still during the duration of a bottleneck (see Figure \ref{trajectoriesXVZ} for an illustration).

Taking $f\in \mathcal{F}$ to be the linear function having slope 1 whenever $\lfloor t \rfloor$ is not in a bottleneck and having slope $({\bar t_m-\underline{t}_m})$ whenever $\lfloor t \rfloor$ is inside the $m$-th bottleneck, we conclude that $||V^N_{\lfloor N^\alpha t\rfloor}-Z^N_{f(\lfloor N^\alpha t\rfloor)}||=0$ (see Figure \ref{trajectoriesXVZ}) and thus 
$$d_\lambda(\{V^N_{\lfloor N^\alpha t\rfloor}\}_{0\le t \le T}, \{Z^N_{\lfloor N^\alpha t\rfloor}\}_{0\le t \le T})\leq ||Id-f||\leq \frac{\sum_{m=1}^\infty (\bar{t}_m-\underline{t}_m)\mathds{1}_{\{\bar{t}_m\leq TN^\alpha\}}}{N^\alpha}.$$ Observing that the latter converges to 0 in probability when $N\rightarrow \infty$, we conclude that 
$$
d_\lambda(\{V^N_{\lfloor N^\alpha t\rfloor}\}_{0\le t \le T},\{Z^N_{\lfloor N^\alpha t\rfloor}\}_{0\le t \le T})\rightarrow 0 \textrm{ in probability}.
$$
Now, observe that taking $A^N=\cup_{m=1}^{i_T^N}[\underline{t}_m,\bar{t}_m)$ where $i_T^N=\sup\{m:\underline{t}_m<TN^\alpha\}$ we have
$$
||\mathds{1}_{(A^N)^c}(Z^N_{\lfloor N^\alpha t\rfloor}-\bar X^N_{\lfloor N^\alpha t\rfloor})||=0  \textrm{ in probability}.
$$
and thus 
$$
d_\lambda(\{\bar X^N_{\lfloor N^\alpha t\rfloor}\}_{0\le t \le T},\{Z^N_{\lfloor N^\alpha t\rfloor}\}_{0\le t \le T})\rightarrow 0 \textrm{ in probability},
$$
which completes this step.

\begin{figure}
\begin{center}
\includegraphics[width=7cm]{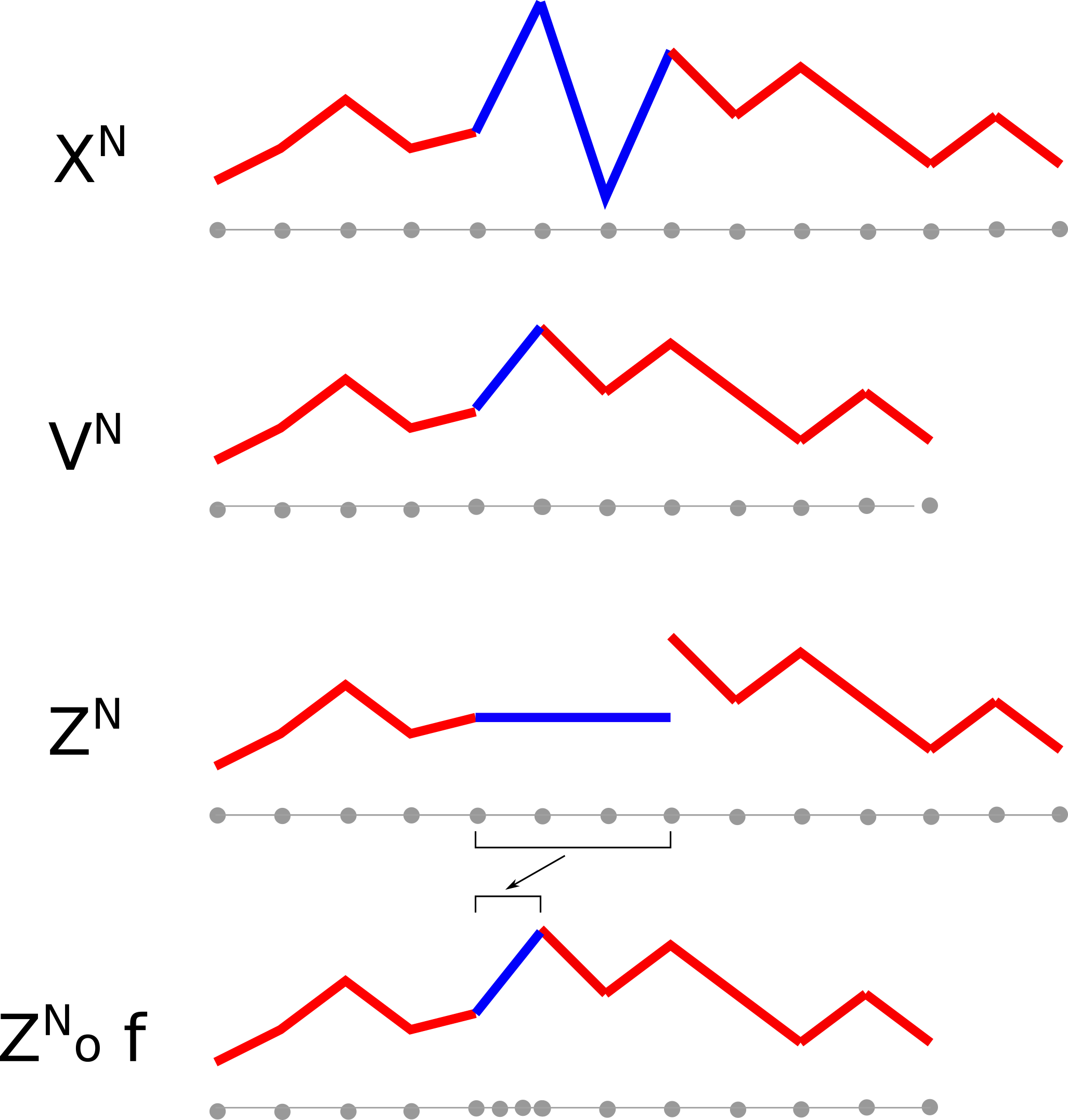}
\caption{The curves represent realizations of $X^N$, $V^N$, $Z^N$ and $Z^N\circ {f}$ respectively. The first curve  is colored according to the population size: red outside the bottlenecks and blue during the bottlenecks. }
\label{trajectoriesXVZ}
\end{center}
\end{figure}

\textbf{Step 2.} As in the proof of Theorem \ref{SDEs}, the idea is to prove the convergence of the generator of $\{V^N_{\lfloor N^{\alpha}t \rfloor}\}_{t\geq0}$, to the generator of $\{\bar X_t\}_{t\geq0}$. Provided this claim is true, we can use Theorem 19.25 and 19.28 of \cite{Kallemberg} to prove the weak convergence of $\{V^N_{\lfloor N^{\alpha} t \rfloor}\}_{0\le t \le T}$ towards $\{\bar X_t\}_{0\le t \le T}$.

From Lemma \ref{lemmaexistence2}, $\{\bar X_t\}_{t\geq0}$ exists and has generator
 $\mathcal{\bar A}$. Its domain contains twice differentiable functions and  for a function $f \in C^2[0,1]$ and $x \in [0,1]$, we have
\begin{equation}
\mathcal{\bar A}f(x) \ = \ \mathds{1}_{\{\alpha = 1\}}\frac12x(1-x)f''(x) \ + \ \eta\sum_{k\ge 1} F^0(k) \sum_{g \ge1} \mathrm{L}(g)  \mathcal{\bar A}^{k,g}f(x),
\label{generatorA2}\end{equation}
with
$$ \mathcal{\bar A}^{k,g}f(x) = \sum_{i =0}^k \p(\bar Y^{k, g,x} = i/k) \left( f(i/k) - f(x) \right),$$
where $\bar Y^{k,g, x}$  is a random variable such that $ \bar Y^{k, g, x} = \sum_{i=1}^k \frac{a_i}{k}  \mathds{1}_{\{u_i \le x\} } $, where $(a_1, \dots, a_k)$ is distributed as $\mathrm{A}^{k,g-1}$ and $(u_1,\ldots, u_k)$ is uniformly distributed in $[0,1]^k$. This generator can be interpreted in the same way as the jump part of \eqref{SDE2}, see Figure \ref{figlongbottleneck}.

The discrete generator $\mathcal{\bar A}^N$ of $\{ V^N_{\lfloor N^{\alpha} t \rfloor}\}_{t\geq0}$ (defined as in \eqref{discretegenerator}), applied to a function $f\in C^2[0,1]$ in $x \in [0,1]$ can be written as
\begin{align*}
&\mathcal{\bar A}^Nf(x)  = \ N^{\alpha}  (1 - \frac{\eta}{N^{\alpha}})  \E \left( f\left(\frac{\sum_{i=1}^NB^x_i}{N}\right) - f(x)\right)\\ 
&+ \eta  \sum_{k\ge1} F^0(k) \sum_{g \ge1} \p(l_{1,n}=g)  \sum_{i =0}^k \p\left(\bar Y^{\min(N,k),g, x} =\frac{ i}{\min(N,k)}\right) \left( f(\frac{i}{\min(N,k)}) - f(x) \right).
\end{align*}
The first term corresponds to the generator of a classical Wright-Fisher model (outside the bottlenecks). As already mentioned, it is well-known that when $\alpha = 1$, this term converges when $N\to \infty$ to $\frac12x(1-x)f''(x)$, which is the generator of the Wright-Fisher diffusion. When $\alpha <1$ this term becomes of order $N^{\alpha-1}$ and therefore converges to $0$. 
The second term corresponds to what happens during a bottleneck (and again, it can be interpreted using Figure \ref{figlongbottleneck}). Recall that $\mathrm{L}/N^\alpha \to 0 $ in distribution, i.e. in the new time scale the bottlenecks are instantaneous.
As $l_{1,N}$ converges in distribution to $\mathrm{L}$, the second term in the generator converges when $N\to \infty$ to the second term of $\mathcal{\bar A}$ (see \eqref{generatorA2}).
Combining these two results, we have $\mathcal{\bar A}^Nf \to \mathcal{\bar A}f$
uniformly. This implies that $\{ V^N_{\lfloor N^{\alpha} t \rfloor}\}_{0\le t \le T}$ converges weakly in the Skorokhod $J_1$ topology to $\{ \bar X_t\}_{0\le t \le T}$. Since convergence in $J_1$ implies convergenec in $d_\lambda$ we have the desired result.
\end{proof}

We fix $\alpha \in (0,1],  \ \eta >0$ and $F^0$ and $\mathrm{L}$ two probability measures in $\N$.
Let us consider $\{\bar N_t\}_{t\geq 0}$, the block-counting process of the drastic bottleneck  coalescent characterized by $\alpha$, $\eta$, $F^0$ and $\mathrm{L}$.
 As in the previous section, we are going to prove a duality relation between this block-counting process and $\{\bar X_t\}_{t\geq 0}$, the unique strong solution of \eqref{SDE2}  (with the same parameters). 
\begin{theorem}
For every $x \in [0,1], \ n \in \N$, we have
$$\E(\bar X_t^n \vert \bar X_0 = x) \ = \ \E (x^{\bar N_t} \vert \bar N_0 = n).$$
\label{samplingduality}
\end{theorem}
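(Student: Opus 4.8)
The plan is to follow the route of the proof of Theorem~\ref{momentduality}: reduce the claimed duality to an identity between the generators $\mathcal{\bar A}$ of $\{\bar X_t\}_{t\ge0}$ and $\mathcal{\bar G}$ of $\{\bar N_t\}_{t\ge0}$ applied to the duality function $h(x,n)=x^n$, and then invoke a standard duality criterion. Both processes are well-defined: $\{\bar X_t\}_{t\ge0}$ is the unique strong solution of \eqref{SDE2} by Lemma~\ref{lemmaexistence2}, with generator $\mathcal{\bar A}$ given by \eqref{generatorA2} on a domain containing $C^2[0,1]$ and hence all polynomials; and since $F^0$ and $\mathrm{L}$ are probability measures, the total jump rate of $\{\bar N_t\}_{t\ge0}$ out of a state with $n$ blocks is at most $a\binom{n}{2}+\eta<\infty$, so $\{\bar N_t\}_{t\ge0}$ is a non-explosive (decreasing) Markov chain with generator $\mathcal{\bar G}$ given by \eqref{generatorG2}. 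Since $h$ is bounded and continuous on $[0,1]\times\N$ and lies in both domains, once we check that $\mathcal{\bar A}\,h(\cdot,n)(x)=\mathcal{\bar G}\,h(x,\cdot)(n)$ for all $x\in[0,1]$ and $n\in\N$, the duality relation $\E(\bar X_t^n\mid\bar X_0=x)=\E(x^{\bar N_t}\mid\bar N_0=n)$ follows exactly as in Proposition~3.8 of \cite{GS} (equivalently, by the standard duality lemma for Markov processes).

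To verify the generator identity I would substitute $f(x)=x^n$ into \eqref{generatorA2}. The Wright--Fisher term contributes $\mathds{1}_{\{\alpha=1\}}\frac12 x(1-x)n(n-1)x^{n-2}=\mathds{1}_{\{\alpha=1\}}\binom{n}{2}(x^{n-1}-x^n)$, which is the Kingman part of $\mathcal{\bar G}$ --- this is just the classical Wright--Fisher/Kingman moment duality \eqref{classicalduality}. The content lies in the jump term: I claim that for each $k,g$,
$$\mathcal{\bar A}^{k,g}h(\cdot,n)(x) \ = \ \E\big[(\bar Y^{k,g,x})^n\big] - x^n \ = \ \sum_{j=1}^{n-1}\p\big(K^{k,g-1,W^{k,n}}=j\big)(x^j-x^n).$$
Indeed, expanding $(\bar Y^{k,g,x})^n=k^{-n}\big(\sum_{i=1}^k a_i\mathds{1}_{\{u_i\le x\}}\big)^n$ over ordered $n$-tuples $(i_1,\dots,i_n)\in\{1,\dots,k\}^n$ and taking expectation over the independent uniforms (so that $\E\prod_{l}\mathds{1}_{\{u_{i_l}\le x\}}=x^{|\{i_1,\dots,i_n\}|}$), one obtains $\E[(\bar Y^{k,g,x})^n]=\E\big[x^{\#\{i:\,\bar A^{k,g-1,n}_i>0\}}\big]$ in the notation of Section~\ref{sect212}. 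The key combinatorial observation is that, conditionally on the family sizes $(a_1,\dots,a_k)\sim\mathrm{A}^{k,g-1}$, the vector of biased family sizes $(\bar A^{k,g-1,n}_1,\dots,\bar A^{k,g-1,n}_k)$ is multinomial with $n$ trials and probabilities $(a_1/k,\dots,a_k/k)$, which is precisely the law obtained by throwing $n$ balls uniformly into the $k$ leaves (the last generation of the bottleneck) and aggregating by generation-$0$ founder. Hence $\#\{i:\bar A^{k,g-1,n}_i>0\}$ is the number of generation-$0$ ancestors of the $n$ balls; since the balls occupy $W^{k,n}$ distinct leaves, this number has the law of $K^{k,g-1,W^{k,n}}$. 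Summing over $j$ (the $j=n$ term vanishes and $\sum_{j\ge1}\p(K^{k,g-1,W^{k,n}}=j)=1$) gives the displayed identity.

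Adding the two contributions shows that $\mathcal{\bar A}\,h(\cdot,n)(x)$ equals
$$\mathds{1}_{\{\alpha=1\}}\binom{n}{2}(x^{n-1}-x^n) + \eta\sum_{k\ge1}F^0(k)\sum_{g\ge1}\mathrm{L}(g)\sum_{j=1}^{n-1}\p\big(K^{k,g-1,W^{k,n}}=j\big)(x^j-x^n),$$
which is exactly $\mathcal{\bar G}\,h(x,\cdot)(n)$ from \eqref{generatorG2} with $a=\mathds{1}_{\{\alpha=1\}}$. The interchange of the expectation over the uniforms with the sums over $k$, $g$ and $j$ is legitimate because $F^0$ and $\mathrm{L}$ are probability measures and $|x^j-x^n|\le1$. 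The main obstacle is precisely the combinatorial identity $\E[(\bar Y^{k,g,x})^n]=\E[x^{K^{k,g-1,W^{k,n}}}]$ --- one must recognise that the biased family sizes $\bar A^{k,g-1,n}$ are multinomial given the unbiased family sizes, and identify $\#\{i:\bar A^{k,g-1,n}_i>0\}$ with $K^{k,g-1,W^{k,n}}$ via the paintbox allocation that produces $W^{k,n}$. The remaining ingredients --- the Kingman/Wright--Fisher part via \eqref{classicalduality}, the interchange of sums, and the passage from the generator identity to the duality --- are routine.
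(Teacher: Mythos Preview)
Your proposal is correct and arrives at exactly the same generator identity the paper proves, but via a different route. The paper's proof decomposes the bottleneck into its first generation and the remaining $g-1$ generations, writes $\bar Y^{k,g,x}$ as a two-step Markov transition, and then repeatedly applies M\"ohle's discrete sampling duality $\E[(Y^{k,g,x})^n]=\E[x^{K^{k,g,n}}]$ (Proposition~3.5 in \cite{mohle1}) together with the one-generation identity $\E[(Y^{k,1,p/k})^n]=\E[(p/k)^{W^{k,n}}]$ to peel off generations one by one. Your argument is more direct: you expand $(\bar Y^{k,g,x})^n$ over ordered $n$-tuples, recognise the resulting expression as $\E_I[x^{|\{I_1,\dots,I_n\}|}\mid a]$ where the $I_l$ are i.i.d.\ with law $(a_1/k,\dots,a_k/k)$, and then identify $|\{I_1,\dots,I_n\}|$ in law with $K^{k,g-1,W^{k,n}}$ by observing that it counts the generation-$0$ ancestors of the $W^{k,n}$ occupied leaves. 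This bypasses the iterated use of M\"ohle's duality and the Markov-property bookkeeping, at the cost of requiring the reader to see in one step the equivalence between the biased-family-size construction and the ``throw $n$ balls into $k$ leaves, then trace back'' picture. Both approaches are sound; yours is shorter and more combinatorial, the paper's is more modular in that it reduces everything to the known discrete Wright--Fisher duality.
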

\begin{proof}
We start by recalling a moment duality between the frequency process of a  classical Wright-Fisher model with population size $k$, started at $x$, denoted by $\{Y^{k,g,x}\}_{g\in \N}$, and the number of blocks in the associated ancestry process of a sample of size $n$, $\{K^{k,g,n}\}_{g \in \N}$, which was established by M\"ohle (Proposition 3.5 in \cite{mohle1}). We consider the function $h$ on $[0,1]\times \N$ such that $h(x,n) = x^n$. We have
$$\E(h( Y^{k,g,x}, n) ) \ = \ \E (h(x, K^{k,g,n}) )$$
i.e.,
\begin{equation}
 \sum_{i = 0}^k \p( Y^{k,g,x} = i/k) h(i/k,n) \ = \ \sum_{b =1}^n \p(K^{k,g,n} = b) h(x,b).
\label{sampling1}
\end{equation}

We start by considering the case $\alpha < 1$.  Using the Markov property of the Wright-Fisher model with bottlenecks (i.e. the fact that the $u_i$'s are independent from the $a_i$'s) we can rewrite $ \bar Y^{k,g,x} $ so that
for every $n \in \N$, the generator $\mathcal{ \bar A}$ applied to $h$ (seen as a function of $x$) is
\begin{align*}
\mathcal{ \bar A}h(x,n) & = \eta\sum_{k\ge1} F^0(k) \sum_{g \ge1} \mathrm{L}(g)\sum_{j = 0}^k \sum_{y=0}^k \p(\sum_{i=1}^k \mathds{1}_{\{u_i \le x\}} = y)\p(Y^{k,g-1,y/k} = \frac{j}{k}) (h(\frac{j}{k},n) - h(x,n)).
\end{align*}
Using the Markov property of $\{Y^{k,g, y/k}\}_{g\in\N}$ we have that
$$\sum_{j = 0}^k \p(Y^{k,g-1,y/k} = \frac{j}{k}) = \sum_{j = 0}^k \sum_{p = 0}^k \p(Y^{k,g-2,y/k}= \frac{p}{k}) \p(Y^{k,1, p/k} = \frac{j}{k}).$$
Then, by definition of the classical Wright-Fisher model, we have that
$Y^{k,1, p/k}$ has the same distribution as $\frac1{k} \sum_{i = 1}^k B_i^{p/k}$, where the $B_i^{p/k}$'s are Bernoulli variables of parameter $p/k$, so
using exactly the same computations as in \eqref{dualitybinom1} and \eqref{dualitybinom2}, we have that
\begin{align*}
\E\left((Y^{k,1, p/k})^n \right) \ = \ \E \left( (p/k)^{W^{k,n}}\right),
\end{align*}
so, 
\begin{align*}
\sum_{j = 0}^k \p(Y^{k,g-1,y/k} = \frac{j}{k})h(\frac{j}{k}, n) & = \sum_{p = 0}^k \p(Y^{k,g-2,y/k} = \frac{p}{k}) \sum_{j = 0}^k \p(Y^{k,1, p/k} = \frac{j}{k}) h(\frac{j}{k}, n)\\
&  = \sum_{p = 0}^k \p(Y^{k,g-2,y/k} = \frac{p}{k})\sum_{m=1}^n \p(W^{k,n}= m)h(\frac{p}{k}, m)\\
&  =  \sum_{m=1}^n \p(W^{k,n}= m) \sum_{b= 1}^k \p(K^{k,g-2,m} = b) h(\frac{y}{k}, b)\\
&  =  \sum_{b = 1}^k \p(K^{k,g-2,W^{k,n}} = b) h(\frac{y}{k}, b),
\end{align*}
where, from the second to third line we used the duality relation \eqref{sampling1}. Replacing into the expression of the generator $\mathcal{ \bar A}$, we have that
\begin{align*}
\mathcal{ \bar A}h(x,n) & = \eta\sum_{k\ge1} F^0(k) \sum_{g \ge1} \mathrm{L}(g) \sum_{y=0}^k \p(\sum_{i=1}^k \mathds{1}_{\{u_i \le x\}} = y)\sum_{b = 1}^k \p(K^{k,g-2,W^{k,n}} = b) (h(\frac{y}{k},b) - h(x,n) ).
\end{align*}
Recall that, if $x \in \{0, 1/k, \dots, 1\}$, the distribution of $\frac1{k}\sum_{i=1}^k \mathds{1}_{\{u_i \le x\}}$ is exactly the distribution of $Y^{k,1,x}$, so we can apply \eqref{sampling1} to $Y^{k,1,x}$ and $K^{k,1,b}$. However, we need to prove that a similar relation exists when $x \in [0,1]\setminus\{0, 1/k, \dots, 1\}$.
In fact, using again the same computations as in \eqref{dualitybinom1} and \eqref{dualitybinom2}, we have that
\begin{align*}
\E\left( (\frac1{k}\sum_{i=1}^k \mathds{1}_{\{u_i \le x\}})^b \right) \ = \ \E \left( x^{W^{k,b}}\right),
\end{align*}
and we let the reader convince herself that $W^{k,b}$ has the same distribution as $K^{k,1,b}$.
Finally, 
\begin{align*}
\mathcal{ \bar A}h(x,n) & = \eta\sum_{k\ge1} F^0(k) \sum_{g \ge1} \mathrm{L}(g) \sum_{b = 1}^k \p(K^{k,g-2,W^{k,n}} = b) \sum_{a=1}^k \p(K^{k,1,b} = a) (h(x,a) - h(x,n))\\
& = \eta\sum_{k\ge1} F^0(k) \sum_{g \ge1}  \sum_{a=1}^k \p(K^{k,g-1,W^{k,n}} = a) \left(h(x,a) - h(x,n)\right) \ = \ \mathcal{ \bar G}h(x,n),
\end{align*}
where in the last line we used the Markov property of $\{K^{k,g,n}\}_{g \in \N}$ and $\mathcal{ \bar G}$ is the generator of $\{\bar N_t\}_{t\geq 0}$ (defined in \ref{generatorG2}) applied to $h$, seen as a function of $n$.

If $\alpha = 1$  we have
\begin {align*}
\mathcal{\bar A}h(x,n) \ = \ \mathcal{A}_1h(x,n) +\mathcal{\bar A}_2h(x,n)
\end{align*}
where $\mathcal{\bar A}_2$ is the infinitesimal generator of $\{\bar X_t\}_{t\geq0}$  for the case $\alpha <1$ and $\mathcal{A}_1$ is the generator of the Wright-Fisher diffusion. 
The proof follows by using the moment duality between the Wright-Fisher diffusion and the Kingman coalescent, i.e. $$ \ \mathcal{A}_1h(x,n) =  \ \binom{n}{2} \left( h(x,n-1) - h(x,n) \right).$$
This, combined with the case $\alpha<1$ completes the proof.
\end{proof}

\section{Coalescents with soft bottlenecks}\label{soft}
\subsection{Kingman coalescents with continuous time rescaling}
In this section we consider bottlenecks that are soft and short. 
More precisely, we consider a Wright-Fisher model with bottlenecks where the sequence of population sizes $\{R^N_g\}_{g \in \Z_+}$ is such that
$$R^N_g = \lfloor N r_g \rfloor + 1$$
 where $\{r_g\}_{g \in\Z_+}$ is a sequence of i.i.d. random variables with a certain law on $[0,1)$ that we denote by $R$. We review here some examples, in which the genealogy is a time rescaled Kingman coalescent. 
The proofs are based on M\"ohle's theorem  \cite{mohle3} that can be rewritten as follows. Let us denote by $\{\Pi^N_t\}_{t\geq0}$ the ancestral process describing the genealogy of a Wright-Fisher model with bottlenecks parametrized by $N$ and $\{R^N_g\}_{g\in\N}$ and let $\{\Pi_t\}_{t\geq0}$ be the standard Kingman coalescent.
\begin{proposition}[M\"ohle's theorem for the Wright-Fisher model with bottlenecks]
Consider 
$$
C_N=\sum_{i=1}^N\frac{1}{i}\p(R^N_1=i)  
\textrm{ and }
D_N=\sum_{i=1}^N\frac{1}{i^2}\p(R^N_1=i).
$$
If
$$ C_N \underset{N\to \infty}{\longrightarrow} 0  \textrm{ and }
 \  D_N/C_N \underset{N\to \infty}{\longrightarrow} 0,$$
then  $\{\Pi^N_{\lfloor t/C_N\rfloor}\}_{t\geq0}$ converges to  $\{\Pi_t\}_{t\geq0}$  in the sense of finite dimensional distributions.\label{mohlebottleneck}
\end{proposition}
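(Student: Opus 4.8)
The plan is to recognise the ancestral process as a time-homogeneous Markov chain and then run M\"ohle's machinery. Fix a sample size $n\in\N$. Since the partitions of $\{1,\dots,n\}$ form a finite set, it is enough to prove convergence of the transition matrices over each time interval $[0,t]$ and to invoke the Markov property. Because the environment $\{R^N_g\}_{g\in\Z_+}$ is i.i.d., the process $\{\Pi^N_g\}_{g\ge0}$, started from the partition of $\{1,\dots,n\}$ into singletons, is a time-homogeneous Markov chain whose one-step transition matrix $\Pi_N$ is obtained by averaging, against the law of $R^N_1$, the transition probabilities of a single Wright--Fisher generation; notice that $C_N=\e[1/R^N_1]$ is exactly the one-generation coalescence probability of two fixed lineages and $D_N=\e[1/(R^N_1)^2]$ dominates the one-generation probability of a triple merger. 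First I would compute $\Pi_N$ to leading order.

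Conditioning on $R^N_1=i$ and on a current configuration of $b\le n$ blocks, the probability that the $b$ blocks pick $b$ distinct parents is $\prod_{j=0}^{b-1}(1-j/i)$; the probability that a prescribed pair of blocks coalesces and the other $b-2$ stay distinct is $\tfrac1i\prod_{j=0}^{b-3}(1-j/i)$; and every transition involving a triple merger, or two simultaneous pairwise mergers, has conditional probability at most $c(n)/i^2$ for a constant $c(n)$ depending only on $n$. Expanding $\prod_{j=0}^{b-1}(1-j/i)=1-\binom b2\tfrac1i+O(1/i^2)$, averaging over $R^N_1$, and treating separately the event $\{R^N_1<n\}$ (whose probability is at most $(n-1)^2D_N$), one gets
$$\Pi_N \ = \ I \ + \ C_N\,Q \ + \ E_N,$$
where $Q$ is the infinitesimal generator of the (partition-valued) Kingman coalescent restricted to partitions of $\{1,\dots,n\}$ and the error matrix satisfies $\|E_N\|=O(D_N)$, with constants depending only on $n$.

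Since by hypothesis $C_N\to0$ and $D_N/C_N\to0$, this reads $\Pi_N=I+C_N(Q+o(1))$ with the $o(1)$ in matrix norm. I would then apply the elementary semigroup-convergence lemma underlying M\"ohle's theorem (cf.\ \cite{mohle3}): if $c_N\to0$ and $P_N=I+c_N(A+o(1))$ are stochastic matrices on a common finite state space, then $P_N^{\lfloor t/c_N\rfloor}\to e^{tA}$ for every $t\ge0$. Taking $c_N=C_N$ and $A=Q$ yields $\Pi_N^{\lfloor t/C_N\rfloor}\to e^{tQ}$, the time-$t$ transition matrix of the Kingman coalescent on $\{1,\dots,n\}$; convergence of these matrices for each $t$, together with the Markov property of the prelimit chains and of the limit, gives convergence of the finite-dimensional distributions of $\{\Pi^N_{\lfloor t/C_N\rfloor}\}_{t\ge0}$ to those of $\{\Pi_t\}_{t\ge0}$, and since $n$ was arbitrary the proposition follows. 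The only genuinely delicate point is the uniform bookkeeping of the error terms, in particular verifying that the contribution of atypically small populations $\{R^N_1<n\}$ is absorbed into $O(D_N)$; conceptually there is nothing new, the content being simply that, despite the fluctuating population size, one generation of the model is a Cannings-type step to which M\"ohle's criterion applies with two-lineage coalescence rate $C_N$ and higher-order merger rates of order $D_N$.
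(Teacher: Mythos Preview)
Your argument is correct and, in fact, more detailed than what the paper offers: the paper does not prove this proposition at all but simply presents it as a direct reformulation of M\"ohle's theorem \cite{mohle3}, writing ``M\"ohle's theorem \cite{mohle3} that can be rewritten as follows'' before stating it. Your proposal spells out precisely why this rewriting is legitimate: you identify $C_N=\e[1/R^N_1]$ with the one-generation pairwise coalescence probability, bound all higher-order mergers (including the contribution from the event $\{R^N_1<n\}$) by a constant times $D_N=\e[1/(R^N_1)^2]$, and then invoke the finite-state semigroup convergence $P_N^{\lfloor t/c_N\rfloor}\to e^{tA}$ that underlies M\"ohle's result. This is exactly the verification one would carry out to justify the paper's claim; there is no alternative route here, only the level of detail differs.

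One small remark on presentation: the bound $\p(R^N_1<n)\le (n-1)^2 D_N$ and, more importantly, $\e[1/R^N_1;\,R^N_1<n]\le (n-1)D_N$ (needed to show that truncating the expectation defining $C_N$ to $\{R^N_1\ge n\}$ costs only $O(D_N)$) are the bookkeeping steps you flag as ``genuinely delicate''; they are indeed routine, but you might want to display them explicitly since they are what makes the decomposition $\Pi_N=I+C_N Q+O(D_N)$ hold on the diagonal and off-diagonal simultaneously.
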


\textbf{Example 1.}
We start by considering the case where there exists $\epsilon>0$ such that $\p(R>\epsilon)=1$, then the genealogy of the model converges to a constant time rescaling of the Kingman coalescent.
To prove it we use M\"ohle's theorem. In fact, 
$$
C_N=\sum_{i=1}^N\frac{1}{i}\p(R^N_1=i)=\sum_{i=\lfloor \epsilon N \rfloor}^N\frac{1}{i}\p(R \in [\frac{i-1}{N}, \frac{i}{N}))\in[\frac{1}{N},\frac{1}{\epsilon N}]
$$
and
$$
D_N=\sum_{i=1}^N\frac{1}{i^2}\p(R^N_1=i)=\sum_{i=\lfloor \epsilon N \rfloor}^N\frac{1}{i^2}\p(R \in [\frac{i-1}{N}, \frac{i}{N}))\in[\frac{1}{N^2},\frac{1}{(\epsilon N)^2}].
$$
So   $C_N\rightarrow 0$,  $D_N/C_N\rightarrow 0$ and we can apply Proposition \ref{mohlebottleneck}.

\medskip 

\textbf{Example 2.}
We assume that $R$ is a uniform random variable in $[0,1]$, i.e. for $g \in \Z_+$ $R_g^N$ is uniformly chosen in $\{1, \dots, N\}$.
Informally, this means that going backward in time, one has to wait, on average, for $N$ generations until the population size is reduced to only 1 individual. Any ancestral lineages present at that time need to coalesce into one. However, the limiting genealogy is still a Kingman coalescent. In fact, 
$$C_N \ = \ \frac{1}{N}\sum_{i=1}^N\frac{1}{i}\sim \frac{\log N}{N}$$
and
$$
D_N \  = \ \frac{1}{N}\sum_{i=1}^N\frac{1}{i^2}=O(1/N).
$$So   $C_N\rightarrow 0$,  $D_N/C_N\rightarrow 0$
 and Proposition \ref{mohlebottleneck} implies that
$\{\Pi^N_{\lfloor t N/\log N\rfloor}\}_{t \geq0} \rightarrow \{\Pi_{t}\}_{t \geq0}$  in the sense of finite dimensional distributions.

\subsection{Subordinated Kingman coalescents}
\label{sub-kingman}
In this section we consider bottlenecks that are soft but that last for several generations. As in Section \ref{sect212}, we are going to assume that, in the limiting model, the times between two bottlenecks are exponentially distributed. But this time we are going to assume that the bottlenecks are soft i.e. $b_{i,N} \to 0 $ but $Nb_{i,N} \to \infty$.
 We present an example inspired by Birkner et al.  \cite{BBMST}, in which the limiting genealogy is a subordinated Kingman coalescent. 

\begin{definition}[Wright-Fisher model with long soft bottlenecks]
Fix  $\alpha \in(0,1]$, $\eta >0$, $N \in \N$ and  $\mathrm{L}_{\gamma}$ a  probability measure on $\R_+$.    
Let $\{b_{i,N}\}_{i\in \N}$, $\{l_{i,N}\}_{i\in \N}$ and $\{s_{i,N}\}_{i\in \N}$ be three sequences of independent positive random variables. 
For any $ i \in \N$, assume that  $b_{i,N} \to 0$ in distribution, that $l_{i,N}/(N b_{i,N}) \to \gamma_i$ in distribution, where  $\gamma_i$ is a random variable with law $\mathrm{L}_\gamma$, and that
$s_{i,N}$ follows a geometric distribution of parameter $\eta/N^\alpha$.
In the Wright-Fisher model with long drastic bottlenecks, the sequence of population sizes $\{R^N_g\}_{g \in \Z_+}$ is given by
 $$
R^N_g \ = \ \left\{ 
\begin{array}{ll}
b_{m,N} N  \textrm{ if } \sum_{i=1}^{m-1}(s_{i, N} + l_{i,N}) + s_{m,N} < g  \le  \sum_{i=1}^{m}(s_{i, N} + l_{i,N}) \\
N \textrm{ otherwise}
\end{array} 
\right..
$$
\label{longsoft}
\end{definition}

As suggested by  Birkner et al. \cite{BBMST}, we show in Section \ref{duality2} that when $N\to\infty$ and time is rescaled by $N^\alpha$,  the genealogy is described by $\{\Pi_{S_t}\}_{t\geq0}$ where $\{S_t\}_{t\geq0}$ is a subordinator (a compound Poisson process with L\'evy measure $\eta {L}_\gamma$ and drift 1).
Proposition 6.3 in \cite{BBMST} states that this subordinated Kingman coalescent is in fact a $\Xi$-coalescent with characterizing measure  
$$\Xi =  a \delta_{(0,0, \dots)}+ \Xi_{KS} \nonumber \\
$$
where
\begin{equation}
\Xi_{KS}(d\zeta) =  (\zeta, \zeta) \int_{(0,\infty)} \sum_{j=1}^\infty \p(K_{\sigma} = j) \eta \mathrm{L}_\gamma(d\sigma)D_j(d\zeta),
\label{xiks}
\end{equation}
and $K_t$ is the number of lineages at time $t>0$ in the standard Kingman coalescent starting with $K_0 = \infty$ and $D_j$ is the law of the re-ordering of a ($j$-dimensional) Dirichlet $(1,\dots,1)$ random vector according to decreasing size. 
This result can be interpreted as follows: the simultaneous multiple collisions part in the measure $\Xi$ corresponds to the way the lineages coalesce during the bottlenecks. As the population size during the bottleneck still goes to infinity, its evolution is still given by a Kingman coalescent and it lasts for a time distributed according to $\mathrm L_\gamma$. 
The frequencies of the remaining blocks have a Dirichlet distribution (see for example  Corollary 2.1 in \cite{Berestycki}).

\begin{remark}
In an informal sense, this model can be seen as a limiting scenario for the model presented in Section \ref{sect212} when the population size during the bottleneck goes to infinity. In fact, in the model from Definition \ref{longsoft} we have
$$
\lambda_{b, (k_1,  \dots, k_r)}
=\mathds{1}_{\{r = b-1, k_1=2\}}  +  \mathcal{N}(n, (k_1,\dots, k_r))^{-1} \eta  \int_{(0,\infty)} {L}_{\gamma}(d\sigma)   \p(X^{\sigma, b} =(k_1, \dots, k_r))
$$
where $X^{\sigma, b}$ is the vector of the sizes of the blocks of a Kingman coalescent at time $\sigma$, starting with $b$ blocks (without taking into account the ordering).
The latter can be understood as a $k=\infty$ version of Proposition \ref{proprate}.
\end{remark}

\subsection{Duality between the Wright-Fisher model with long soft bottlenecks and the subordinated Kingman coalescent}
 \label{duality2}
Finally, we consider the Wright-Fisher model with long soft bottlenecks from Definition \ref{longsoft}, with two types of individuals. 
  We denote by $\{\hat X^N_g\}_ {g\in\N}$ the frequency process associated with that model.

  \begin{theorem}
  Let $\mathrm{L}_{\gamma}$ be a probability measure on $\mathbb{R}_+$.
Fix  $\alpha \in(0,1]$ and $\eta>0$.  
Consider the sequence of processes  $\{\hat X^N\}_{N \in \N}$, such that $\hat X^N=\{\hat X^N_g\}_{g\in\Z_+}$ is the frequency process associated with the Wright-Fisher model with long soft bottlenecks parametrized by $\alpha$, $\eta$, $N$ and $\mathrm{L}_{\gamma}$ (see Definition \ref{longsoft}). Then, for all $T>0$, in $D[0,T]$,
$$
\{\hat X^N_{\lfloor N^{\alpha} t\rfloor}\}_{0\le t \le T} \overset{d_\lambda}{\underset{N\to\infty}{\Longrightarrow} } \{\hat X_t\}_{0\le t \le T},
$$
where $\{\hat X_t\}_{t\ge0}$ is the strong solution of the SDE
\begin{equation}
d\hat X_t \ = \ \mathds{1}_{\{\alpha = 1\}} \sqrt{\hat X_t(1-\hat X_t)}dB_t \ + \ \int_{[0,1]}\int_{[0,1]} \sum_{i\ge1} {\zeta_i} \left( \mathds{1}_{\{u_i \le \hat X_{t^-}\} }  - \hat X_{t^-}\right) \tilde N(dt, d\zeta, du),
\label{SDE3}
\end{equation}
where  $\{B_t\}_{t\ge0}$ is a standard Brownian motion and $\hat N$ is a compensated Poisson measure   on  $(0, \infty) \times \Delta \times [0,1]^{\N}$. $\hat N$ has intensity $\eta ds \otimes \frac{\Xi_{KS}(d \zeta)}{(\zeta, \zeta)} \otimes du $, where $du$ is the Lebesgue measure on $[0,1]^{\N}$  and $\Xi_{KS}$ is the characterizing measure of the subordinated Kingman coalescent, defined in \eqref{xiks}. 
\label{SDEs3}
\end{theorem}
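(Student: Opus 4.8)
The plan is to follow the same two-step scheme as in the proof of Theorem~\ref{SDEs2}. First, one checks that \eqref{SDE3} admits a unique strong solution: arguing as in Lemma~\ref{lemmaexistence2} (Lemma~3.6 in \cite{GS}, built on Theorem~5.1 in \cite{Li}), it suffices to note that $\Xi_{KS}$ is a \emph{finite} measure on $\Delta$, since by \eqref{xiks} and $(\zeta,\zeta)\le 1$ on $\Delta$ one has $\Xi_{KS}(\Delta)\le \eta\int_{(0,\infty)}\sum_{j\ge1}\p(K_\sigma=j)\,\mathrm{L}_\gamma(d\sigma)=\eta<\infty$, so the jump coefficient $\zeta\mapsto\sum_{i\ge1}\zeta_i(\mathds{1}_{\{u_i\le x\}}-x)$ satisfies the square-integrability and Lipschitz bounds required by \cite{Li}. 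Denote by $\hat{\mathcal A}$ the generator of $\{\hat X_t\}$: for $f\in C^2[0,1]$ it reads $\hat{\mathcal A}f(x)=\mathds{1}_{\{\alpha=1\}}\tfrac12 x(1-x)f''(x)+\eta\int_{(0,\infty)}\mathrm L_\gamma(d\sigma)\sum_{j\ge1}\p(K_\sigma=j)\,\E\bigl[f(\sum_{i=1}^j\zeta_i\mathds{1}_{\{u_i\le x\}})-f(x)\bigr]$, where $(\zeta_1,\dots,\zeta_j)$ is Dirichlet$(1,\dots,1)$ and the $u_i$ are i.i.d.\ uniform, independent of everything. Equivalently, the last term is $\eta\int_\Delta\E\bigl[f(\sum_{i\ge1}\zeta_i\mathds{1}_{\{u_i\le x\}})-f(x)\bigr]\,\Xi_{KS}(d\zeta)/(\zeta,\zeta)$, the jump part of \eqref{SDE3}.

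\textbf{Step 1.} As in the proof of Theorem~\ref{SDEs2}, collapse each bottleneck of $\{\hat X^N_g\}$ into a single generation to obtain a Markov chain $V^N$, and let $Z^N$ be the c\`adl\`ag interpolation of $\hat X^N$ that stays frozen during each bottleneck. With the piecewise-linear $f\in\mathcal F$ having slope $1$ outside bottlenecks and absorbing each bottleneck's length, $\|V^N_{\lfloor N^\alpha t\rfloor}-Z^N_{f(\lfloor N^\alpha t\rfloor)}\|=0$, hence $d_\lambda(V^N_{\lfloor N^\alpha \cdot\rfloor},Z^N_{\lfloor N^\alpha \cdot\rfloor})\le\|\mathrm{Id}-f\|\le N^{-\alpha}\sum_{m\ge1}l_{m,N}\mathds{1}_{\{\bar t_m\le TN^\alpha\}}$, while $d_\lambda(\hat X^N_{\lfloor N^\alpha\cdot\rfloor},Z^N_{\lfloor N^\alpha\cdot\rfloor})\le\|\mathds{1}_{(A^N)^c}(\hat X^N-Z^N)\|=0$ with $A^N$ the complement of the union of bottleneck intervals up to $TN^\alpha$. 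The only thing to verify is that $N^{-\alpha}\sum_{m\ge1}l_{m,N}\mathds{1}_{\{\bar t_m\le TN^\alpha\}}\to 0$ in probability: the number of bottlenecks before $TN^\alpha$ has a Poisson$(\eta T)$ limit (hence is $O_\p(1)$), and each length $l_{m,N}\sim\gamma_m Nb_{m,N}$ with $b_{m,N}\to0$, so $l_{m,N}/N^\alpha\to0$ (automatic for $\alpha=1$; for $\alpha<1$ this is the standing requirement $N^{1-\alpha}b_{m,N}\to0$). Thus $d_\lambda(\hat X^N_{\lfloor N^\alpha\cdot\rfloor},V^N_{\lfloor N^\alpha\cdot\rfloor})\to0$ in probability.

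\textbf{Step 2.} It remains to prove $V^N_{\lfloor N^\alpha\cdot\rfloor}\Rightarrow\hat X$ in the $J_1$ topology, which by generator convergence (Theorems~19.25 and 19.28 in \cite{Kallemberg}) reduces to $\mathcal{\bar A}^N f\to\hat{\mathcal A}f$ uniformly, where $\mathcal{\bar A}^N=N^\alpha(\mathcal U^N-I)$ is the discrete generator of the collapsed chain. As before, $\mathcal{\bar A}^N f$ splits into a between-bottlenecks Wright--Fisher term, converging to $\mathds{1}_{\{\alpha=1\}}\tfrac12 x(1-x)f''(x)$ (to $0$ if $\alpha<1$), plus a bottleneck term $\eta\,\E[f(\hat Y^N_x)-f(x)]$, where $\hat Y^N_x$ is the type-$1$ frequency at the end of a bottleneck of population size $M_N:=\lfloor b_{1,N}N\rfloor+1$ lasting $l_{1,N}$ generations, started from an infinite population at frequency $x$. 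Conditionally on the surviving family structure, $\hat Y^N_x=\sum_{i\ge1}\zeta^{(N)}_i\mathds{1}_{\{u_i\le x\}}$ with i.i.d.\ uniforms $u_i$ and $\zeta^{(N)}$ the ranked vector of relative family sizes of a Wright--Fisher model of size $M_N$ run for $l_{1,N}$ generations. Since $M_N\to\infty$ and $l_{1,N}/M_N\to\gamma_1$ with law $\mathrm L_\gamma$, the classical convergence of the Wright--Fisher genealogy to Kingman's coalescent (on time scale $M_N$) yields that the number of families converges to $K_{\gamma_1}$ and, jointly, $\zeta^{(N)}$ converges in distribution in $\Delta$ to a Dirichlet$(1,\dots,1)$ vector of that random dimension (Corollary~2.1 in \cite{Berestycki}); i.e.\ the law of $\zeta^{(N)}$ converges to $\int_{(0,\infty)}\mathrm L_\gamma(d\sigma)\sum_{j\ge1}\p(K_\sigma=j)D_j(\cdot)$, which is $\Xi_{KS}(\cdot)/(\zeta,\zeta)$ up to normalisation. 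Passing the bounded continuous $f$ through this limit gives $\eta\,\E[f(\hat Y^N_x)-f(x)]\to\eta\int_\Delta\E[f(\sum_{i\ge1}\zeta_i\mathds{1}_{\{u_i\le x\}})-f(x)]\,\Xi_{KS}(d\zeta)/(\zeta,\zeta)$, the bottleneck part of $\hat{\mathcal A}$, with uniformity in $x$ coming from equicontinuity of $x\mapsto\E[f(\sum_{i\ge1}\zeta_i\mathds{1}_{\{u_i\le x\}})]$. Hence $\mathcal{\bar A}^N f\to\hat{\mathcal A}f$ uniformly, $V^N_{\lfloor N^\alpha\cdot\rfloor}\Rightarrow\hat X$ in $J_1$, and since $J_1$-convergence implies $d_\lambda$-convergence, combining with Step~1 finishes the proof.

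\textbf{Main obstacle.} The delicate point is the within-bottleneck limit in Step~2: one must show that the ranked relative family-size vector of a Wright--Fisher model with population size $M_N\to\infty$, run for $\sim\sigma M_N$ generations, converges in $\Delta$ to $\sum_{j\ge1}\p(K_\sigma=j)D_j$, that this holds jointly with the weak convergence of $l_{1,N}/M_N$ to $\mathrm L_\gamma$, and that there is enough uniform integrability to pull the bounded continuous $f$ through and to integrate against $\mathrm L_\gamma$ in $\sigma$ --- in particular to control the small-$\sigma$ regime, where the number of families blows up but the squared mass $(\zeta,\zeta)$, hence the contribution to $\Xi_{KS}$, shrinks, so that finiteness of $\Xi_{KS}$ tames the tail. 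Everything else --- the collapsing estimate of Step~1, the Wright--Fisher term, and (as a downstream consequence, along the lines of Theorem~\ref{samplingduality}) the moment duality with the block-counting process of the subordinated Kingman coalescent --- is routine.
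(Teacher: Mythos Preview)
Your proof is correct and follows the paper's overall two-step scheme (collapse bottlenecks to obtain a Markov chain $V^N$, then prove generator convergence for $V^N$). The difference lies in how you treat the bottleneck term in Step~2. The paper argues \emph{forward}: it invokes the classical convergence of the discrete Wright--Fisher frequency process (population size $M_N=b_{1,N}N\to\infty$, time scale $M_N$) to the Wright--Fisher diffusion $\{Y_t\}$, so the bottleneck term converges to $\eta\int_{\R_+}\mathrm L_\gamma(d\sigma)\,\E[f(Y_\sigma)-f(x)\mid Y_0=x]$; only \emph{afterwards} does it rewrite $\p(Y_\sigma\in dy\mid Y_0=x)$ via duality with Kingman's coalescent started from infinity and the Dirichlet representation of its block frequencies, thereby recovering $\Xi_{KS}$. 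You argue \emph{backward}: you show directly that the ranked relative family-size vector inside the bottleneck converges in $\Delta$ to the Kingman$+$Dirichlet law $\sum_{j\ge1}\p(K_\sigma=j)D_j$. Both routes are valid; the paper's has the advantage that the forward convergence (discrete WF $\to$ WF diffusion) is a one-dimensional textbook result, so the ``main obstacle'' you flag --- joint convergence of the number of families and their sizes, plus uniform control in the small-$\sigma$ regime --- largely dissolves: all of that is packaged into the law of the scalar random variable $Y_\sigma$. Your route is more direct to the $\Xi_{KS}$ formula and makes the coalescent picture explicit from the outset. Your observation that Step~1 for $\alpha<1$ requires the extra hypothesis $N^{1-\alpha}b_{m,N}\to0$ (so that $l_{m,N}/N^\alpha\to0$) is well taken; the paper is rather informal on this point.
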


Again, before proving Theorem \ref{SDEs3} we shall make sure that a solution to Equation \eqref{SDE3} exists.  
\begin{lemma}
For any probability measure $\mathrm{L}_{\gamma}$ in $\R_+$ and any $\alpha \in (0,1], \ \eta>0$, there exists a unique strong solution to the SDE \eqref{SDE3}. 
\label{lemmaexistence3}
\end{lemma}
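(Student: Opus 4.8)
The plan is to deduce the statement from Theorem~5.1 of \cite{Li}, in exactly the way Lemma~\ref{lemmaexistence} was obtained via Lemma~3.6 of \cite{GS}. Equation~\eqref{SDE3} is a Fleming--Viot type SDE with vanishing drift, diffusion coefficient $\mathds{1}_{\{\alpha = 1\}}\sqrt{\hat X_t(1-\hat X_t)}$, and jump part driven by the $\sigma$-finite measure $\Xi_{KS}(d\zeta)/(\zeta,\zeta)$ on $\Delta$ together with Lebesgue measure $du$ on $[0,1]^\N$; it therefore falls under the scope of \cite{GS}, Lemma~3.6, applied to the measure $\Xi = a\delta_{(0,0,\dots)} + \Xi_{KS}$ with drift $0$. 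Since the drift vanishes and $x\mapsto\sqrt{x(1-x)}$ is $\tfrac12$-H\"older, so that the one-dimensional Yamada--Watanabe argument underlying \cite{Li,GS} applies, the only point that still needs to be checked is the integrability condition governing the jump term. By the Remark following Lemma~\ref{lemmados}, this reduces to the finiteness of
\begin{equation*}
\int_\Delta \sum_{i\ge1}\zeta_i^2\,\frac{\Xi_{KS}(d\zeta)}{(\zeta,\zeta)} \ = \ \Xi_{KS}(\Delta).
\end{equation*}

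First I would integrate \eqref{xiks} over $\Delta$ and apply Fubini to get
\begin{equation*}
\Xi_{KS}(\Delta) \ = \ \eta\int_{(0,\infty)}\sum_{j\ge1}\p(K_\sigma = j)\,\mathrm{L}_\gamma(d\sigma)\int_\Delta(\zeta,\zeta)\,D_j(d\zeta).
\end{equation*}
Since $D_j$ is the law of the decreasing reordering of a $\mathrm{Dirichlet}(1,\dots,1)$ vector $(Z_1,\dots,Z_j)$, and $(\zeta,\zeta)=\sum_i\zeta_i^2$ does not depend on the ordering, one computes $\int_\Delta(\zeta,\zeta)\,D_j(d\zeta) = \E\!\big[\sum_{i=1}^j Z_i^2\big] = j\cdot\tfrac{2}{j(j+1)} = \tfrac{2}{j+1}\le 1$, each $Z_i$ being $\mathrm{Beta}(1,j-1)$. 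Because the Kingman coalescent comes down from infinity, $K_\sigma$ is almost surely finite for every $\sigma>0$, hence $\sum_{j\ge1}\p(K_\sigma=j)=1$ for all such $\sigma$, and the displayed quantity is therefore bounded by $\eta\int_{(0,\infty)}\mathrm{L}_\gamma(d\sigma)=\eta<\infty$, since $\mathrm{L}_\gamma$ is a probability measure. Thus $\Xi_{KS}$, and with it $\Xi=a\delta_{(0,0,\dots)}+\Xi_{KS}$, is a finite measure on $\Delta$ (consistent with its being the characterizing measure of a well-defined $\Xi$-coalescent), which is exactly the hypothesis needed for \cite{GS}, Lemma~3.6 (equivalently \cite{Li}, Theorem~5.1) to produce existence and pathwise uniqueness of a strong solution to \eqref{SDE3}.

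I do not anticipate a genuine obstacle here: the argument runs in complete parallel to the proofs of Lemmas~\ref{lemmaexistence} and~\ref{lemmaexistence2}, and the only non-routine ingredient is the elementary Dirichlet second-moment identity $\E[\sum_{i=1}^j Z_i^2]=2/(j+1)$, combined with the coming-down-from-infinity of the Kingman coalescent. As an alternative, and in the more hands-on spirit of the proof of Lemma~\ref{lemmaexistence2}, one could instead verify conditions (3.a), (3.b) and (5.a) of \cite{Li} directly: condition (3.a) is trivial because the drift is $0$, and for (3.b) and (5.a) the Bernoulli-coupling estimate used there carries over verbatim, the role of $\E(\sum_i a_i^2/k^2)$ being taken by $\int_\Delta(\zeta,\zeta)\,\Xi_{KS}(d\zeta)/(\zeta,\zeta)$, which the computation above bounds by $\eta$.
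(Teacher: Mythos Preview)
Your proposal is correct. The paper's own proof is a single line, ``See Proposition~3.4 in \cite{GS}'', so both approaches rely on the same existence and uniqueness machinery from \cite{GS} (and ultimately \cite{Li}). The only difference is one of detail: you cite Lemma~3.6 in \cite{GS}, as was done for Lemma~\ref{lemmaexistence}, and then explicitly verify the finiteness hypothesis by computing $\Xi_{KS}(\Delta)\le\eta$ via the Dirichlet second-moment identity $\E[\sum_{i=1}^j Z_i^2]=2/(j+1)$ together with the coming-down-from-infinity of the Kingman coalescent. This extra computation is not strictly needed if one is willing to take the cited result as a black box, but it makes the argument self-contained and transparent; in particular it yields the explicit bound $\Xi_{KS}(\Delta)\le\eta$, which is a nice byproduct.
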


\begin{proof}
See Proposition 3.4 in \cite{GS}.
 \end{proof}
We are now ready to prove Theorem \ref{SDEs3}. 

\begin{proof}[Proof of Theorem \ref{SDEs3}]
We use the same strategy as in the proof of Theorem \ref{SDEs2}. 
Again, if $g_i$ is the $i$-th generation that is not in a bottleneck, we define $$
\hat V^N_i=\hat X^N_{g_i}.
$$ 
Following Step 1 in the proof of Theorem \ref{SDEs2} we have 
$$d_\lambda(\{\hat X^N_{\lfloor N^\alpha t\rfloor}\}_{0\le t \le T}, \{\hat V^N_{\lfloor N^\alpha t\rfloor}\}_{0\le t \le T})\rightarrow 0 \textrm{ in probability}.$$
Again, we need to prove the convergence of the generator of $\{\hat V^N_{\lfloor N^{\alpha}t \rfloor}\}_{t\ge0}$, to the generator of $\{\hat X_t\}_{t\ge0}$. 

From Lemma \ref{lemmaexistence3}, $\{\hat X_t\}_{t\ge0}$ exists and has generator
 $\mathcal{\hat A}$. Its domain contains twice differentiable functions and  for a function $f \in C^2[0,1]$ and $x \in [0,1]$, we have
 \begin{align}
\mathcal{\hat A}f(x) \ =& \ \mathds{1}_{\{\alpha = 1\}}\frac12x(1-x)f''(x) \ + \ \int_{\Delta} \frac{\Xi_{KS}(d\zeta)}{(\zeta, \zeta)} \E\left(f\left(\sum_{i\ge1}\zeta_i B_i^x\right) - f(x)\right),
\label{generatorAhat}
\end{align}
where the $B_i^x$'s are Bernoulli random variables of parameter $x$ and the second term is the generator of the frequency process associated with a $\Xi_{KS}$-Fleming-Viot process, see for example formula (5.6) in \cite{BBMST}.

The discrete generator $\mathcal{\hat A}^N$ of $\{\hat V^N_{\lfloor N^{\alpha} t \rfloor}\}_{t\ge0}$ (defined as in \eqref{discretegenerator}), applied to a function $f\in C^2[0,1]$ in $x \in [0,1]$ can be written as
\begin{align}
\mathcal{\hat A}^Nf(x) \ =& \ N^{\alpha}  (1 - \frac{\eta}{N^{\alpha}})  \E \left( f\left(\frac{\sum_{i=1}^NB_i^x}{N}\right) - f(x)\right)  \label{gen13}\\ 
+& \  N^{\alpha}  \frac{\eta}{N^{\alpha}}  \sum_{k\ge 1} \p(b_{i,N}N = k) \sum_{g \ge1} \p(l_{1,N} = g)  \sum_{i =0}^k \p(\bar Y^{k, g, x} = i/k) \left( f(i/k) - f(x) \right ),
\label{gen23}
\end{align}
which can be interpreted in the same way as the generator $\mathcal{\bar A}^N$.
Again, when $\alpha=1$, part \eqref{gen13}, corresponds to the generator of a classical Wright-Fisher model and converges when $N\to \infty$ to $\frac12x(1-x)f''(x)$, which is the generator of the Wright-Fisher diffusion. When $\alpha <1$ this term becomes of order $N^{\alpha-1}$ and therefore converges to $0$. 

Part \eqref{gen23} corresponds to what happens during a bottleneck (we recall that $\mathrm{L}_\gamma/N^\alpha \to 0 $ in distribution i.e. in the new time scale the bottlenecks are instantaneous). 
It is well-known that $\{Y^{k,\lfloor kt\rfloor,x}\}_{t\ge0}$ converges in distribution, in the Skorokhod topology to the Wright-Fisher diffusion $\{Y_t\}_{t\ge0}$ with $Y_0 = x$ (see for example Chapter 2 in \cite{E}). In a similar way we can prove that $\{\bar Y^{k,\lfloor kt\rfloor,x}\}_{t\ge0}$ converges in distribution, in the Skorokhod topology to the same process. 
In fact, $\{\bar Y^{k,\lfloor kt\rfloor,x}\}_{t\ge0}$ has the distribution of the frequency process of a Wright-Fisher model, with a random initial condition. 
 This, combined with the assumptions that $b_{1,N}N\to\infty$ and $l_{1,N}/(N b_{1,N}) \to \mathrm{L}_\gamma$ in distribution, implies that
\begin{align}
\mathcal{\hat A}^Nf(x) \ {\longrightarrow}& \ \mathds{1}_{\{\alpha = 1\}}\frac12x(1-x)f''(x) 
+  \eta \int_{\R_+}  \mathrm{L}_{\gamma}(d\sigma)  \int_{[0,1]} \p(Y_{\sigma} \in dy\vert Y_0 = x) \left( f(y) - f(x) \right ).
\label{322}
\end{align}
Finally, to compute $\p(Y_{\sigma} \in dy\vert Y_0 = x) $, we use the duality relation between the Wright-Fisher diffusion and the Kingman coalescent \eqref{classicalduality}. More precisely, to compute the probability that the proportion of type $1$ individuals is $y$ at time $\sigma$, we can follow backwards in time the ancestry of the whole population. The number of ancestors is given by a Kingman coalescent started at $K_0 = \infty$. If $K_{\sigma} = j$, each one of the $j$ ancestors  is of type $1$ with probability $x$ and the fraction of the population (at time $\sigma$) that is issued from each one of the $j$ ancestors is given by a  Dirichlet distribution $D_j$.  This means that 
\begin{align}
 \eta \int_{\R_+}  \mathrm{L}_{\gamma}(d\sigma)   \p(Y_{\sigma} \in dy|Y_0=x)  &= \eta \int_{\R_+}  \mathrm{L}_{\gamma}(d\sigma) \sum_{j\ge1} \p(K_{\sigma} = j)  \int_{\Delta} D_j(\zeta) \p(\sum_{i\ge1}\zeta_i B_i^x \in dy) \label{probadual} \\
 &= \int_{\Delta} \frac{\Xi_{KS}}{(\zeta,\zeta)}(d\zeta)\p(\sum_{i\ge1}\zeta_i B_i^x \in dy), \nonumber
\end{align}
and replacing into \eqref{322}, we have that $\mathcal{\hat A}^N$ converges to $ \mathcal{\hat A}$ uniformly.
This implies that $\{ V^N_{\lfloor N^{\alpha} t \rfloor}\}_{0\le t \le T}$ converges weakly in the Skorokhod $J_1$ topology to $\{ \bar X_t\}_{0\le t \le T}$. Since convergence in $J_1$ implies convergenec in $d_\lambda$ we have the desired result.
\end{proof}

We fix $\alpha \in (0,1],  \ \eta >0$ and $\mathrm{L}_\gamma$ a probability measure on $\R_+$.
Let us consider $\{\hat N_t\}_{t\geq 0}$, the block-counting process of the subordinated Kingman coalescent characterized by $\alpha$, $\eta$ and $\mathrm{L}_\gamma$.
 As in the previous sections, we are going to prove a moment duality property between the block-counting process and the diffusion with jumps $\{\hat X_t\}_{t\geq 0}$ defined above (with the same parameters). 
\begin{theorem}
For every $x \in [0,1], \ n \in \N$, we have
$$\E(\hat X_t^n \vert \hat X_0 = x) \ = \ \E (x^{\hat N_t} \vert \hat N_0 = n).$$
\label{lastduality}
\end{theorem}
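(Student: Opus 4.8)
The plan is to repeat the generator-comparison argument of the proofs of Theorems \ref{momentduality} and \ref{samplingduality}. Write $h(x,n)=x^{n}$, let $\mathcal{\hat A}$ be the generator of $\{\hat X_t\}_{t\ge0}$ given in \eqref{generatorAhat}, and let $\mathcal{\hat G}$ be the generator of the block-counting process $\{\hat N_t\}_{t\ge0}$ of the subordinated Kingman coalescent; since $\{\hat N_t\}$ is a Kingman block-counting process time-changed by the subordinator $S$ (of drift $\mathds{1}_{\{\alpha=1\}}$ and L\'evy measure $\eta\mathrm{L}_\gamma$), it follows from the collision rates recalled in Section \ref{sub-kingman} that, for $h\colon\N\to\R$,
\[
\mathcal{\hat G}h(n)=\mathds{1}_{\{\alpha=1\}}\binom{n}{2}\bigl(h(n-1)-h(n)\bigr)+\eta\int_{(0,\infty)}\mathrm{L}_\gamma(d\sigma)\Bigl(\E\bigl[h(K^{(n)}_\sigma)\bigr]-h(n)\Bigr),
\]
where $K^{(n)}_\sigma$ denotes the number of blocks at time $\sigma$ of a Kingman coalescent started with $n$ blocks. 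I would then prove that $\mathcal{\hat A}h(\cdot,n)(x)=\mathcal{\hat G}h(x,\cdot)(n)$ for all $x\in[0,1]$, $n\in\N$; the moment duality follows from the usual duality argument, and in fact the statement is a particular case of Proposition 3.8 in \cite{GS}.

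For the diffusion term, $\tfrac12x(1-x)\partial_x^{2}(x^{n})=\binom{n}{2}(x^{n-1}-x^{n})$, so $\mathds{1}_{\{\alpha=1\}}\tfrac12x(1-x)f''(x)$ produces on $h$ exactly the Kingman part of $\mathcal{\hat G}h$ (this is the classical Wright-Fisher/Kingman duality \eqref{classicalduality}). For the jump term, a multinomial expansion together with $\E\bigl(\prod_i(B_i^x)^{k_i}\bigr)=x^{\#\{i\,:\,k_i\ge1\}}$ yields, for each $\zeta\in\Delta$,
\[
\E\Bigl(\bigl(\sum_{i\ge1}\zeta_iB_i^x\bigr)^{n}\Bigr)=\E\bigl(x^{M_n(\zeta)}\bigr),
\]
where $M_n(\zeta)$ is the number of non-empty boxes obtained by throwing $n$ balls into the paintbox associated with $\zeta$; hence the jump term of \eqref{generatorAhat} applied to $h$ equals $\int_\Delta\frac{\Xi_{KS}(d\zeta)}{(\zeta,\zeta)}\bigl(\E(x^{M_n(\zeta)})-x^{n}\bigr)$.

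It remains to match this with the jump part of $\mathcal{\hat G}h$, i.e.\ with $\eta\int_{(0,\infty)}\mathrm{L}_\gamma(d\sigma)\bigl(\E(x^{K^{(n)}_\sigma})-x^{n}\bigr)$. Integrating the identity \eqref{probadual} (already established in the proof of Theorem \ref{SDEs3}) against $y^{n}$, and using \eqref{classicalduality} on the left-hand side and the computation $\E((\sum_i\zeta_iB_i^x)^n)=\E(x^{M_n(\zeta)})$ on the right-hand side, one obtains
\[
\eta\int_{(0,\infty)}\mathrm{L}_\gamma(d\sigma)\,\E\bigl(x^{K^{(n)}_\sigma}\bigr)=\eta\int_{(0,\infty)}\mathrm{L}_\gamma(d\sigma)\sum_{j\ge1}\p(K_\sigma=j)\int_\Delta D_j(d\zeta)\,\E\bigl(x^{M_n(\zeta)}\bigr),
\]
and substituting \eqref{xiks} into the right-hand side turns it into $\int_\Delta\frac{\Xi_{KS}(d\zeta)}{(\zeta,\zeta)}\E(x^{M_n(\zeta)})$, which is exactly the term needed. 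This gives $\mathcal{\hat A}h(\cdot,n)(x)=\mathcal{\hat G}h(x,\cdot)(n)$ when $\alpha<1$; for $\alpha=1$ one adds the two Kingman contributions and concludes as before. The main obstacle is the sampling-consistency content hidden in \eqref{probadual}: one must use both that the asymptotic frequencies of the blocks of the Kingman coalescent are Dirichlet$(1,\dots,1)$ and that restricting to a uniform sample of size $n$ again produces a Kingman coalescent on $n$ blocks, so that the ``forward'' multinomial paintbox count $M_n(\zeta)$ and the ``backward'' ancestor count $K^{(n)}_\sigma$ agree in law; this is carried out in the proof of Theorem \ref{SDEs3} and can simply be invoked. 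Everything else is routine bookkeeping of the kind already performed in the earlier duality proofs.
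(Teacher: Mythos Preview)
Your proposal is correct and follows essentially the same route as the paper: both argue by showing $\mathcal{\hat A}h(\cdot,n)(x)=\mathcal{\hat G}h(x,\cdot)(n)$, handling the diffusion part via the classical Wright--Fisher/Kingman duality and the jump part via \eqref{probadual} together with the definition \eqref{xiks} of $\Xi_{KS}$. The paper's proof is considerably terser (it essentially just writes down the one-line chain of equalities induced by \eqref{probadual}), whereas you spell out the multinomial-paintbox identity $\E\bigl[(\sum_i\zeta_iB_i^x)^n\bigr]=\E\bigl[x^{M_n(\zeta)}\bigr]$ and the form of $\mathcal{\hat G}$ explicitly, but the underlying argument is the same.
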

\begin{proof}
We only consider the case $\alpha < 1$, (as the extension to the case $\alpha = 1$ can be done exactly as in Section \ref{duality1}). 
Let $h(x, n)=x^n$, seen as a function of $x$.
Using  \eqref{probadual}, for every $n \in \N$, the generator $\mathcal{ \hat A}$ applied to $h$ (seen as a function of $x$) can be rewritten as
\begin{align*}
\mathcal{ \hat A}h(x,n) & = \eta\int_{\R_+}  \mathrm{L}_{\gamma}(d\sigma) \sum_{j\in \N} \p(K_{\sigma} = j)  \int_{\Delta} D_j(\zeta) \p(\sum_{i\in \N}\zeta_i B_i^x \in dy) \left (h(y,n) - h(x,n)\right ) \\
& = \mathcal{ \hat G}h(x,n),
\end{align*}
where $\mathcal{ \hat G}$ is the generator of $\{\hat N_t\}_{t\geq 0}$ applied to $h$, seen as a function of $n$.
\end{proof}

 \section*{Acknowledgements}
 The authors thank three anonymous referees for very improving commentaries.
ACG thanks Jochen Blath for helpful discussions.
AGC was supported by CONACyT Grant A1-S-14615, VMP by  DGAPA-UNAM postdoctoral program and ASJ by CONACyT Grant CB-2014/243068 and  by UNAM-PAPIIT grant IA103820.

\end{document}